\documentclass[10pt]{amsart}
\calclayout

\usepackage{amssymb,amsthm,mathtools,thmtools,cite,hyperref,tikz-cd}
\allowdisplaybreaks

\DeclareMathOperator{\Cl}{Cl}
\DeclareMathOperator{\id}{id}
\DeclareMathOperator{\spec}{Spec}
\DeclareMathOperator{\End}{End}
\DeclareMathOperator{\Der}{Der}
\DeclareMathOperator{\Hom}{Hom}

\DeclareMathOperator{\Lie}{Lie}
\DeclareMathOperator{\adj}{adj}
\DeclareMathOperator{\diag}{diag}
\DeclareMathOperator{\Span}{span}
\DeclareMathOperator{\pure}{pure}
\DeclareMathOperator{\Sing}{Sing}

\DeclareMathOperator{\Dist}{Dist}
\DeclareMathOperator{\GL}{GL}

\DeclareMathOperator{\cdet}{cdet}
\DeclareMathOperator{\rdet}{rdet}

\newcommand{\co}{\mathcal{O}}
\newcommand{\cx}{\mathcal{X}}
\newcommand{\cs}{\mathcal{S}}
\newcommand{\cd}{\mathcal{D}}
\newcommand{\cpinf}{\mathcal{P}_\infty}

\newcommand{\bfX}{\mathbf{X}}
\newcommand{\bfY}{\mathbf{Y}}
\newcommand{\bfH}{\mathbf{H}}
\newcommand{\bfXi}{\mathbf{\Xi}}

\newcommand{\fq}{\mathfrak{q}}
\newcommand{\fn}{\mathfrak{n}}
\newcommand{\fb}{\mathfrak{b}}
\newcommand{\fh}{\mathfrak{h}}
\newcommand{\fgl}{\mathfrak{gl}}
\newcommand{\ba}{\mathbb{A}}
\newcommand{\bc}{\mathbb{C}}
\newcommand{\bn}{\mathbb{N}}
\newcommand{\bz}{\mathbb{Z}}
\newcommand{\lra}[2]{\langle{#1},{#2}\rangle}
\newcommand{\bone}{\boldsymbol{1}}

\numberwithin{equation}{section}

\theoremstyle{plain}
\newtheorem{theorem}{Theorem}[section]
\newtheorem{proposition}[theorem]{Proposition}
\newtheorem{lemma}[theorem]{Lemma}
\newtheorem{corollary}[theorem]{Corollary}
\newtheorem*{theoremA}{Theorem A}
\newtheorem*{theoremB}{Theorem B}
\newtheorem*{theoremC}{Theorem C}

\theoremstyle{definition}
\newtheorem{definition}[theorem]{Definition}
\newtheorem{example}[theorem]{Example}

\theoremstyle{remark}
\newtheorem{remark}[theorem]{Remark}

\title[Highest weight vectors in queer Howe duality]{Highest weight vectors in queer Howe duality}

\author{Minghao Bi}
\address{School of Mathematical Sciences, University of Science and Technology of China, Hefei, Anhui, 230006, China}
\email{mhbi@mail.ustc.edu.cn}

\author{Hongjia Chen}
\address{School of Mathematical Sciences, University of Science and Technology of China, Hefei, Anhui, 230006, China}
\email{hjchen@ustc.edu.cn}

\author{Longhui Wang}
\address{School of Mathematical Sciences, University of Science and Technology of China, Hefei, Anhui, 230006, China}
\email{lhwang@ustc.edu.cn}

\author{Yongjie Wang}
\address{School of Mathematics, Hefei University of Technology, Hefei, Anhui, 230009, China}
\email{wyjie@mail.ustc.edu.cn}

\thanks{This paper is partially supported by the National Key R\&D Program of China (2024YFA1013802), the NSF of China (12401036, 12471025), the Anhui Provincial Natural Science Foundation (2308085MA01), and the Fundamental Research Funds for the Central Universities (JZ2025HGTG0251).}

\subjclass[2020]{Primary 17B10; Secondary 14M30, 16T05, 17B45}
\keywords{queer Lie superalgebra, Howe duality, algebraic supergroup, Gauss decomposition, highest weight vectors}

\begin{document}

\begin{abstract}
We use Gauss decomposition to study the singular vectors in the regular representation on the coordinate superalgebra of the big cell of the queer supergroup. This gives explicit bases for the highest weight spaces in queer Howe duality. As an application, the same method recovers the explicit highest weight vectors in the general linear case obtained by Cheng and Wang.
\end{abstract}

\maketitle

\section{Introduction}

\subsection{}

Howe duality originates from Howe's representation-theoretic approach to classical invariant theory \cite{MR986027,MR1321638}.  It concerns pairs of mutually commuting actions of Lie groups or Lie algebras on a common algebra, where the resulting representation admits a multiplicity-free decomposition.

A basic example is the \((\fgl(m),\fgl(n))\)-Howe duality on \(S(\bc^m\otimes \bc^n)\).  Under the natural commuting actions of \(\fgl(m)\) and \(\fgl(n)\), one has the multiplicity-free decomposition
\[
S(\bc^m\otimes \bc^n)
 \cong
\bigoplus_{\substack{\lambda\in\mathcal{P}\\ \ell(\lambda)\leq \min(m,n)}}
V_m(\lambda)\otimes V_n(\lambda),
\]
where \(V_r(\lambda)\) denotes the irreducible \(\fgl(r)\)-module of highest weight \(\lambda\). Moreover, the highest weight vectors in this decomposition are products of leading minors. Their explicit forms play a role in relating, for the general linear Lie algebra, the Howe duality, the Schur--Weyl duality, and the first fundamental theorem (FFT) of invariant theory \cite{MR1321638}.

Several analogues of Howe duality for Lie superalgebras have been developed. For the general linear Lie superalgebra case, Cheng and Wang \cite{MR1847665} established the \((\fgl(p|q),\fgl(m|n))\)-Howe duality on \(S(\bc^{p|q}\otimes\bc^{m|n})\), together with explicit formulas for the corresponding highest weight vectors.  Further developments and applications appear in \cite{MR1847665,MR3012224}.

Another important analogue is the queer case. The \((\fq(m),\fq(n))\)-Howe duality was established independently by Cheng and Wang \cite{MR1786858}, and by Sergeev \cite{MR1827078}.  Unlike the general linear case, \(\bc^{m|m}\otimes\bc^{n|n}\) splits into two isomorphic irreducible \(\fq(m)\oplus\fq(n)\)-summands; we denote the irreducible representation by \(\bc^{mn|mn}\).  We have the following multiplicity-free decomposition
\[
S(\bc^{mn|mn})\cong
\bigoplus_{\substack{\lambda\in\mathcal{SP}\\ \ell(\lambda)\leq \min(m,n)}}
2^{-\delta(\lambda)}L_m(\lambda)\otimes L_n(\lambda),
\]
where \(\delta(\lambda)\) is the parity of \(\ell(\lambda)\) and \(L_r(\lambda)\) denotes the irreducible \(\fq(r)\)-module of highest weight \(\lambda\). This decomposition is the representation-theoretic form of the Cauchy identity for Schur \(Q\)-functions, and it is closely related to the Schur--Sergeev duality.

Quantum analogues of Howe duality have also been developed in various settings. Zhang \cite{MR1974323} established the quantum general linear case using quantum coordinate algebras. Chang and Wang \cite{MR4040727} later established the quantum queer case using quantum coordinate queer superalgebras as noncommutative analogues of symmetric superalgebras. In the quantum queer case, the Howe duality also recovers the Sergeev--Olshanski duality and leads to the corresponding FFT; more details can be found in \cite{MR5005254,Chang2025}.

We observe that the classical \((\fgl(n),\fgl(n))\)-Howe duality can be viewed as the subrepresentation \(\co(M(n))\subset \co(\GL(n))\) in the algebraic Peter--Weyl theorem for \(\GL(n)\). This observation suggests studying Howe duality through the Peter--Weyl theorem.
The theorem describes functions on Lie groups in terms of matrix coefficients of their irreducible representations \cite{MR1512386}. In its algebraic form, the coordinate algebra \(\co(G)\) of a reductive algebraic group carries commuting left and right regular actions, and decomposes accordingly as a \(G\times G\)-module \cite{MR2522486}.  

Styrkas \cite{styrkas2004regularrepresentationbigcell} extended this viewpoint to regular functions on the big cell of a complex simple algebraic group, obtaining a Peter--Weyl type description in terms of big projective modules in the BGG category \(\co\). Scheunert and Zhang \cite{MR1927431} studied the Peter--Weyl theory for the general linear supergroup via the Hopf superalgebra structure on its coordinate superalgebra of regular functions.

\subsection{}

The main purpose of this paper is to determine the highest weight vectors in the Howe duality for the queer Lie superalgebra. Motivated by the classical general linear case, where such vectors are given by products of leading minors, we seek analogous explicit generators for the corresponding highest weight vectors in the \((\fq(m),\fq(n))\)-Howe dualities.

Compared with the basic Lie superalgebras, the queer Lie superalgebra has several distinctive features. For instance, its Cartan subalgebra \(\fh\) is non-abelian, and thus the highest weight space of \(L_n(\lambda)\) carries a natural Clifford module structure \cite[Section~1.5.4]{MR3012224}.

For a strict partition \(\lambda\) of length \(\ell\), the highest weight vector spaces associated with the \(\lambda\)-summand in the \((\fq(m),\fq(n))\)-Howe dualities are naturally identified whenever they exist. We denote the common space by \(H_\lambda\) and view it as a subspace of
\[
\cpinf:=\bc[x_{ij},y_{ij}\mid i,j\in\bn].
\]
Thus, it suffices to characterize \(H_\lambda\) in the \((\fq(n),\fq(n))\)-Howe duality, since the natural identifications above then determine the highest weight vector spaces for all queer Howe dualities.

Unlike the general linear case, where the relevant highest weight space is one-dimensional, \(H_\lambda\) has dimension \(2^\ell\). We therefore use the actions of the Cartan subalgebras of the two queer Lie superalgebras to construct a natural irreducible \(\Cl_{2\ell}\)-module structure on \(H_\lambda\), giving a more precise description.

As in the classical case, the \((\fq(n),\fq(n))\)-Howe duality can be viewed as the subrepresentation \(\co(\cx(n))\subset \co(Q(n))\) in the regular representation on \(\co(Q(n))\). To obtain its singular vectors, we extend the regular actions on \(\co(Q(n))\) to \(\co(\Omega_n)\), where \(\Omega_n\) is the big cell associated with the Gauss decomposition of \(Q(n)\).

For \(r\in\bn\), let \(d_r:=\det(x_{ij})_{1\leq i,j\leq r}\), and let \(\cd\subset\cpinf\) be the multiplicative subset generated by all \(d_r\). We regard all the algebras \(\co(\Omega_n)\) as subalgebras of the common localization \(\cd^{-1}\cpinf\). The diagonal coordinate functions arising from the Gauss decompositions are compatible with these inclusions; we denote their common images by \(a_k,b_k\in\cd^{-1}\cpinf\). We obtain the following complete characterization of the singular vectors in \(\co(\Omega_n)\).

\begin{theoremA}
The \(\fn^-\oplus\fn^+\)-singular subspace of \(\co(\Omega_n)\) is given by
\[
\Sing(\co(\Omega_n))=\Gamma_n^{-1}(1\otimes\co(H_n)\otimes 1),
\]
where \(\Gamma_n\) denotes the Gauss decomposition isomorphism
\[
\co(\Omega_n)\overset{\sim}{\to}\co(U_n^-)\otimes\co(H_n)\otimes\co(U_n^+).
\]
More explicitly,
\[
\Sing(\co(\Omega_n))=
\Span\left\{
\prod_{i=1}^n a_i^{r_i}b_i^{\epsilon_i}
\;\middle|\;
r_i\in\bz,\ \epsilon_i\in\{0,1\},\ 1\leq i\leq n
\right\},
\]
where the elements \(a_k,b_k\in\cd^{-1}\cpinf\) for \(k\in\bn\) are given by
\[
\begin{pmatrix}
a_k&b_k\\b_k&a_k
\end{pmatrix}
=Z_{kk}-
(Z_{k1},\ldots,Z_{k,k-1})
(Z_{ij})_{1\leq i,j\leq k-1}^{-1}
(Z_{1k},\ldots,Z_{k-1,k})^t
\]
with
\[
Z_{ij}:=
\begin{pmatrix}
x_{ij}&y_{ij}\\y_{ij}&x_{ij}
\end{pmatrix}.
\]
\end{theoremA}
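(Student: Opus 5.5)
The plan is to transport the problem through the Gauss decomposition isomorphism \(\Gamma_n\) and reduce it to computing invariants of unipotent groups acting on themselves. We realize \(\Omega_n\cong U_n^-\times H_n\times U_n^+\) via multiplication \((u,h,v)\mapsto uhv\). Here \(U_n^\pm\) are the unipotent supergroups, with super Lie algebras \(\fn^\pm\), and \(H_n\) is the Cartan subsupergroup, whose matrices are block diagonal with \(2\times2\) blocks \(\begin{pmatrix}a&b\\b&a\end{pmatrix}\). We then show that \(\Omega_n\) is stable under left multiplication by \(U_n^-\) and right multiplication by \(U_n^+\). Under \(\Gamma_n\), the two regular actions become the left regular action of \(U_n^-\) on the first factor and the right regular action of \(U_n^+\) on the last factor, while \(\co(H_n)\) is untouched. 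Differentiating, the \(\fn^-\)-action on \(\co(\Omega_n)\) corresponds to \(D^-\otimes\id\otimes\id\), where \(D^-\) is the infinitesimal left-invariant vector field action on \(\co(U_n^-)\); similarly \(\fn^+\) acts only through the third tensor factor.

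The singular subspace therefore factors as
\[
\co(U_n^-)^{\fn^-}\otimes\co(H_n)\otimes\co(U_n^+)^{\fn^+}.
\]
The key lemma is that \(\co(U^\pm_n)^{\fn^\pm}=\bc\). To prove it, identify \(\co(U_n^+)\) with a free supercommutative polynomial superalgebra in the off-diagonal even and odd coordinates \(x_{ij},y_{ij}\) with \(i<j\). Filter by degree, or order the coordinates by \(j-i\). The infinitesimal regular action of the root vectors \(E_{ij}\) and of their odd partners has the form \(\partial/\partial x_{ij}+(\text{terms involving coordinates of smaller height})\), and likewise for \(\partial/\partial y_{ij}\). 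An induction on height then shows that an invariant function is independent of every coordinate, hence constant. Equivalently, \(\co(U)^{\Lie U}\) consists of functions constant on the connected unipotent supergroup: invariance under the underlying even group gives constancy on the body, and the odd derivations kill the odd-coordinate dependence. This proves the first description \(\Sing(\co(\Omega_n))=\Gamma_n^{-1}(1\otimes\co(H_n)\otimes1)\).

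For the explicit description, note that \(\co(H_n)\) is the superalgebra \(\bc[a_i^{\pm1},b_i]\), with the \(a_i\) even and invertible and the \(b_i\) odd, so \(b_i^2=0\). Hence \(\co(H_n)\) is spanned by the monomials \(\prod a_i^{r_i}b_i^{\epsilon_i}\). It remains to identify the pullbacks of the diagonal coordinates with the displayed Schur complement formula. Write the leading \(k\times k\) block matrix (in \(2\times2\) blocks) as \(u h v\) and restrict to the leading \(k\times k\) blocks, which is legitimate since \(u\) is lower and \(v\) is upper unitriangular. Writing this block as \(\begin{pmatrix}A&B\\C&D\end{pmatrix}\) with \(A\) of size \(k-1\), block-\(LDU\) elimination gives the \(k\)-th diagonal factor as \(D-CA^{-1}B\). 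The inverse of \(A\) exists in \(\cd^{-1}\cpinf\) because its even part has determinant \(d_{k-1}\), up to the nilpotent odd contribution. This also shows compatibility with the inclusions \(\co(\Omega_n)\subset\co(\Omega_{n+1})\).

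The main obstacle is the first step: proving rigorously that \(\Gamma_n\) is an isomorphism of superalgebras intertwining the actions. This requires that the big cell be defined by invertibility of the \(d_r\) and that the super Gauss factorization exist over \(\cd^{-1}\cpinf\). We would handle it by recursive block elimination, using the formulas above to define \(u\), \(h\), \(v\) explicitly and checking uniqueness from \(U^-\cap H U^+=1\).
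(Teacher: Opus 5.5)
Your proposal is correct and has the same architecture as the paper's proof. You pull the two nilpotent actions back through $\Gamma_n$ so that $\fn^-$ acts only on the $\co(U_n^-)$ factor and $\fn^+$ only on the $\co(U_n^+)$ factor. You then show the invariants there are scalars, and read off $a_k,b_k$ as Schur complements of the block $LDU$ factorization. Two small differences in the supporting steps: the paper cites Brundan--Kleshchev for the Gauss decomposition rather than re-deriving it. It also justifies your ``differentiating'' step: the localized derivations $\rho_1,\rho_2$ agree with the infinitesimal left $U_n^-$- and right $U_n^+$-translations on $\co(\Omega_n)$ by uniqueness of superderivation extensions.

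The genuine difference is in how you prove the key lemma $\co(U_n^\pm)^{\fn^\pm}=\bc$. The paper's argument is coordinate-free. From $\fn^-$-invariance it gets $\vartheta^-(u)f=\epsilon(u)f$ for all $u\in U(\fn^-)$. Nondegeneracy of the Hopf pairing $\co(U^-)\otimes U(\fn^-)\to\bc$ then forces $\Delta(f)=1\otimes f$, so $f=\epsilon(f)$. This is short and carries over verbatim to $\GL(m|n)$, but it rests on Brundan--Kleshchev's injectivity of $\co\to\Dist^*$. Your route uses only the coproduct on $U_n^\pm$ and the fact that $\co(U_n^\pm)$ is a free polynomial superalgebra. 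It is more elementary and self-contained, at the cost of an explicit computation that must be redone for each group.

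One correction to that computation. For the right-translation action on $\co(U_n^+)$ one finds
\[
E_{ab}\mapsto \frac{\partial}{\partial x_{ab}}+\sum_{i<a}\Bigl(x_{ia}\frac{\partial}{\partial x_{ib}}+y_{ia}\frac{\partial}{\partial y_{ib}}\Bigr)\qquad(a<b).
\]
Up to the normalizing scalar, $F_{ab}$ maps to $\pm\partial/\partial y_{ab}$ plus terms $x_{ia}\partial/\partial y_{ib}$ and $y_{ia}\partial/\partial x_{ib}$ with $i<a$. The $U_n^-$ side is symmetric.

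So the correction terms differentiate along coordinates of strictly \emph{larger} height $b-i>b-a$, not smaller. The induction must start at the top: for $a=1$ the operators are exactly $\partial/\partial x_{1b}$ and $\pm\partial/\partial y_{1b}$. It then proceeds by increasing row index $a$ within each column $b$. Run in increasing height, as your phrasing suggests, the inductive step fails.

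Your ``equivalent'' body-plus-odd-derivations reformulation is not a proof as stated. The $\fn^+_{\bar 0}$-invariants of $\co(U_n^+)$ are not just the constants. They form the exterior algebra on the odd entries of $qp^{-1}$, where $p,q$ are the even and odd coordinate matrices of $U_n^+$. Cutting this down to $\bc$ with the odd operators is exactly the height computation above, so keep that as your argument.
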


By considering the weight space decomposition of the singular subspace in \(\co(\Omega_n)\), we obtain the singular vectors in \(\co(\cx(n))\). Identifying \(\co(\cx(n))\) with \(S\bigl(\bc^{n^2|n^2}\bigr)\) and viewing it as a subspace of \(\cpinf\), we obtain the highest weight vectors in \(H_\lambda\).

\begin{theoremB}
Let \(\lambda=(\lambda_1,\ldots,\lambda_\ell)\) be a strict partition. Then the set
\[
\left\{\prod_{k=1}^{\ell}a_k^{\lambda_k-\delta_k}b_k^{\delta_k}\;\middle|\;\delta_k\in\{0,1\},\ 1\leq k\leq\ell\right\}
\]
is a basis of \(H_\lambda\).
\end{theoremB}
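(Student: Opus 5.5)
Theorem B will be deduced from Theorem A by intersecting the singular subspace of \(\co(\Omega_n)\) with the polynomial subalgebra \(\co(\cx(n))\) and reading off weights. Work in the \((\fq(n),\fq(n))\)-Howe duality with \(n\geq\ell\). Since \(\co(\cx(n))\subset\co(\Omega_n)\) and the regular actions restrict, the singular vectors of \(\co(\cx(n))\) are
\[
\Sing(\co(\Omega_n))\cap\co(\cx(n)).
\]
By Theorem A this is the set of polynomial elements in the span of the monomials \(\prod_i a_i^{r_i}b_i^{\epsilon_i}\).

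The first step is to compute the weights of these monomials. The diagonal factor \(\co(H_n)\) has coordinates \(a_i,b_i\), where \(a_i\) is even and \(b_i\) is odd with \(b_i^2=0\). Under the even Cartan part of both \(\fh\)'s, both \(a_i\) and \(b_i\) have weight \(\varepsilon_i\) on each side. This is visible from the block formula: \(a_k,b_k\) come from the \((k,k)\) block of the Schur complement, which transforms by the \(k\)-th diagonal character on the left and on the right. Hence
\[
\prod_i a_i^{r_i}b_i^{\epsilon_i}
\]
has weight \(\sum_i (r_i+\epsilon_i)\varepsilon_i\) for both actions. The \(\lambda\)-highest weight space \(H_\lambda\) is therefore the polynomial part of the span of those monomials with \(r_i+\epsilon_i=\lambda_i\).

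The second step is polynomiality, which I expect to be the main obstacle. One must show two things. First, for \(\mu_i=r_i+\epsilon_i\), a combination of such monomials lies in \(\co(\cx(n))\) only if \(\mu\) is a partition with nonnegative entries. Second, for \(\mu=\lambda\) strict with \(\ell(\lambda)=\ell\), each monomial \(\prod_{k\le\ell}a_k^{\lambda_k-\delta_k}b_k^{\delta_k}\) is itself a polynomial. For the first part, I would use linear independence of these monomials in \(\co(H_n)\), transported by \(\Gamma_n\), together with the known decomposition of \(S(\bc^{n^2|n^2})\): the weights occurring are exactly the strict \(\lambda\), each with multiplicity \(2^{\ell}\) in the highest weight space. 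Counting dimensions then forces \(H_\lambda\) to equal the whole span once the \(2^\ell\) candidate monomials are shown to be polynomial. Indeed, the span of monomials of weight \(\lambda\) in \(\co(H_n)\) has dimension \(2^{\ell}\), because \(\epsilon_i=0\) is forced when \(\lambda_i=0\), since \(b_i\) carries weight \(\varepsilon_i\) and \(r_i=-1\) would not be polynomial. The Howe decomposition gives \(\dim H_\lambda=2^\ell\) as well, so it suffices to prove polynomiality. The \(2^{-\delta(\lambda)}\) factor must be accounted for in this count; I would check via the Clifford module structure that the highest weight space of \(L_n(\lambda)\otimes L_n(\lambda)\) halved gives exactly \(2^\ell\).

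For polynomiality, I would write \(a_k\pm b_k\) as ratios of "super-minors". Diagonalize \(Z_{ij}\) via \(x\pm y\), so that \(a_k\pm b_k=D_k^{\pm}/D_{k-1}^{\pm}\), where \(D_k^{\pm}=\det(x_{ij}\pm y_{ij})_{i,j\le k}\) formally. Then \(\prod_k a_k^{\lambda_k-\delta_k}b_k^{\delta_k}\) can be rewritten as a combination of products \(\prod_k (a_k\pm b_k)^{\lambda_k}\). Because \(\lambda_1>\lambda_2>\dots>\lambda_\ell>0\), the exponents telescope into a product of \((D_k^\pm)^{\lambda_k-\lambda_{k+1}}\) with nonnegative exponents, possibly after using the nilpotency of odd parts to absorb denominators. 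Strictness is used exactly here: it guarantees each difference \(\lambda_k-\lambda_{k+1}\geq1\), which absorbs the single extra factor produced by \(b_k\). Once polynomiality holds, linear independence from Theorem A and the dimension count give the basis. Finally, the natural identifications of highest weight spaces across \(m,n\) transfer the result to \(H_\lambda\subset\cpinf\).
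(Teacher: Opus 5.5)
\textbf{Gap: the polynomiality step fails.} Your reduction to Theorem A and the weight bookkeeping are fine. But the step you single out as the main obstacle, polynomiality of the $2^\ell$ candidates, is not established, and the mechanism you propose cannot work.

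\begin{enumerate}
\item The ``super-minors'' $D_k^{\pm}=\det(x_{ij}\pm y_{ij})$ are not defined. The entries are inhomogeneous and do not commute, e.g.\ $(x_{11}+y_{11})(x_{12}+y_{12})-(x_{12}+y_{12})(x_{11}+y_{11})=2y_{11}y_{12}$. Likewise $(a_j+b_j)(a_k+b_k)-(a_k+b_k)(a_j+b_j)=2b_jb_k\neq 0$ for $j\neq k$, so nothing telescopes.
\item Any candidate for $D_k^{\pm}$ built from the $a_j\pm b_j$ fails outright. An ordered product $\prod_{j\le k}(a_j\pm b_j)$, say, is a nonzero element of $\Sing(\co(\Omega_k))$ of weight $(1^k)$. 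Since $(1^k)$ is not strict for $k\ge 2$, it is not a highest weight in Theorem~\ref{thm:howe-duality}, so no nonzero polynomial singular vector has this weight. Concretely, $a_1a_2+b_1b_2=x_{11}x_{22}-x_{12}x_{21}+y_{11}y_{22}+y_{12}y_{21}-2x_{11}^{-1}x_{12}y_{11}y_{21}$. Hence the factors $(D_k^{\pm})^{\lambda_k-\lambda_{k+1}}$ are not polynomial, and ``absorbing denominators by nilpotency'' is exactly the unproved content.
\item Isolating $a_k^{\lambda_k-1}b_k$ from $(a_k\pm b_k)^{\lambda_k}$ forces you to handle all mixed-sign products $\prod_k(a_k+s_kb_k)^{\lambda_k}$ with $s\in\{\pm1\}^\ell$. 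For these, even a formal telescoping fails, because $D^+$- and $D^-$-denominators do not cancel.
\end{enumerate}

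\textbf{The missing idea: polynomiality is a consequence, not an input.} Your dimension count runs in the wrong direction. Take $n=\ell$ (recall $H_\lambda=H_\lambda^{\ell,\ell}$).
\begin{enumerate}
\item In $\co(H_\ell)=\bc[x_{kk}^{\pm1},y_{kk}]$, the monomials $\prod_k a_k^{r_k}b_k^{\epsilon_k}$ of weight $\lambda$ are those with $r_k+\epsilon_k=\lambda_k\ge 1$. These are exactly your $2^\ell$ candidates, and they are linearly independent.
\item By Theorem A and Corollary~\ref{cor:sing}, $H_\lambda$ lies in their span.
\item Since $\dim H_\lambda=2^\ell$, equality holds, and polynomiality (Corollary~\ref{cor:polynomiality}) follows for free.
\end{enumerate}
This is the paper's argument.

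Note also that for $n>\ell$ your claim that the weight-$\lambda$ span has dimension $2^\ell$ is false. The factors $a_i^{-1}b_i$ with $i>\ell$ have weight $0$, so the span has dimension $2^n$. Non-polynomiality of individual monomials does not exclude polynomial linear combinations. Choosing $n=\ell$ removes this issue.
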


The method developed above also applies to the \((\fgl(p|q),\fgl(m|n))\)-Howe duality in the case \(p=m\). In this case, the singular vectors in \(\co(\Omega)\) recover the explicit highest weight vectors obtained by Cheng and Wang \cite[Theorems~4.3 and~4.4]{MR1847665}.

\begin{theoremC}
For fixed \(m\), let \(\lambda\) be a partition. Define
\[
w_\lambda:=\prod_{i=1}^m \alpha_i^{\lambda_i}
\cdot\prod_{j=1}^{\lambda_{m+1}} \bigl(\beta_j^{(m)}\bigr)^{\langle\lambda'_j-m\rangle},
\]
where the elements \(\alpha_i,\beta_j^{(m)}\in\cd_m^{-1}\cpinf^{(m)}\) are characterized by
\begin{align*}
&\prod_{i=1}^r \alpha_i=\det\bfX_{[r],[r]}&&(1\leq r\leq m),\\
&\prod_{j=1}^s \beta_j^{(m)}=\det\,(\bfY-\bfXi \bfX^{-1}\bfH)_{[s],[s]}&&(s\in\bn),
\end{align*}
where,
\begin{align*}
\bfX=&(x_{ij})_{1\leq i,j\leq m},
&\bfH=&(\eta_{il})_{1\leq i\leq m,\ l\in\bn},\\
\bfXi=&(\xi_{kj})_{k\in\bn,\ 1\leq j\leq m},
&\bfY=&(y_{kl})_{k,l\in\bn}.
\end{align*}
Then \(H_\lambda^{(m)}=\bc w_\lambda\). In particular, for \(q,n\geq\lambda_{m+1}\), under the identification, \(w_\lambda\) is a highest weight vector for the \(L_{m|q}(\lambda)\otimes L_{m|n}(\lambda)\) summand in the \((\fgl(m|q),\fgl(m|n))\)-Howe duality.
\end{theoremC}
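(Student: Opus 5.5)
We prove Theorem C with the same strategy as Theorems A and B, now for the general linear supergroup $\GL(m|N)$ with $N$ large (formally $N=\infty$). The key point is that for $p=m$ the even block $\bfX$ is square. Let $\Omega\subset\GL(m|N)$ be the big cell of the Gauss decomposition $\Omega\cong U^-\times H\times U^+$ with respect to the standard Borel, and let $\Gamma$ be the corresponding isomorphism $\co(\Omega)\overset{\sim}{\to}\co(U^-)\otimes\co(H)\otimes\co(U^+)$. First step: reproduce the argument of Theorem A for the left and right regular actions to get
\[
\Sing(\co(\Omega))=\Gamma^{-1}(1\otimes\co(H)\otimes 1).
\]
This rests on two facts. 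The $\fn^-$- and $\fn^+$-actions become the regular actions on the $\co(U^\mp)$ factors, and the only $U^\pm$-invariant functions on the unipotent groups $U^\pm$ are constants.

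Second step: compute the diagonal coordinates. We perform the Gauss decomposition blockwise. Eliminating the even $m\times m$ block $\bfX$ leaves the Schur complement $\bfY-\bfXi\bfX^{-1}\bfH$ in the odd-odd block. The diagonal entries of $H$ are then the $\alpha_i$, obtained as ratios of leading minors of $\bfX$, and the $\beta_j^{(m)}$, obtained as ratios of leading minors of this Schur complement. This yields exactly the stated characterizations. Thus $\Sing(\co(\Omega))$ is spanned by the monomials $\prod\alpha_i^{r_i}\prod\beta_j^{s_j}$ with $r_i,s_j\in\bz$. Each such monomial is a weight vector: its weight is $(r_1,\dots,r_m\mid s_1,s_2,\dots)$ for both the left and right Cartan actions.

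Third step: intersect with $\co(\cx)=\cpinf^{(m)}$, the polynomial superalgebra in $x,\xi,\eta,y$, where $y$ are the odd coordinates. By Cheng--Wang Howe duality, the $\lambda$-isotypic highest weight space is one-dimensional with weight the hook weight
\[
(\lambda_1,\dots,\lambda_m\mid\langle\lambda'_1-m\rangle,\langle\lambda'_2-m\rangle,\dots).
\]
Only one monomial in $\alpha,\beta$ has this weight, namely $w_\lambda$. Hence $H_\lambda^{(m)}\subseteq\bc w_\lambda$, and equality holds once we know $w_\lambda$ is actually a polynomial, i.e. lies in $\cpinf^{(m)}$ rather than only in $\cd_m^{-1}\cpinf^{(m)}$.

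The main obstacle is this polynomiality. Rewrite $w_\lambda$ as
\[
w_\lambda=\prod_{r=1}^{m}(\det\bfX_{[r],[r]})^{\lambda_r-\lambda_{r+1}}\cdot\prod_s\det(\bfY-\bfXi\bfX^{-1}\bfH)_{[s],[s]}^{\mu_s-\mu_{s+1}},
\]
where $\mu_s=\langle\lambda'_s-m\rangle$ and we set $\lambda_{m+1}$ appropriately. The Schur complement has denominators that are powers of $d_m=\det\bfX$. Write $\bfX^{-1}=\adj(\bfX)/d_m$ and use that the odd entries $\xi,\eta$ are nilpotent. Then $\det(\bfY-\bfXi\bfX^{-1}\bfH)_{[s],[s]}$ is a polynomial in $\bfXi\adj(\bfX)\bfH/d_m$, and its expansion has degree at most $ms$ in the $\xi$'s, so at most $d_m^{-ms}$ appears. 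Since $\lambda_m\ge\lambda_{m+1}\ge\mu_1$, the factor $d_m^{\lambda_m-\lambda_{m+1}}$ together with the shared exponent accounts for the $d_m^{\lambda_m}$ available. I would show that the required power of $d_m$ is bounded by $m\mu_1\le$ total $d_m$-exponent. Alternatively, and more cleanly, I would verify that $w_\lambda$ coincides with the Cheng--Wang polynomial formula from \cite[Theorems~4.3 and~4.4]{MR1847665} via a Berezinian-type identity.

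The fallback is a density argument. Howe duality guarantees a nonzero polynomial highest weight vector of this weight, and by the first step it lies in $\Sing(\co(\Omega))$ and is proportional to $w_\lambda$. So $w_\lambda$ itself is polynomial, and $H_\lambda^{(m)}=\bc w_\lambda$. The final claim about $(\fgl(m|q),\fgl(m|n))$ then follows from the natural identifications for $q,n\ge\lambda_{m+1}$.
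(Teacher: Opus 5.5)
Your proposal is correct and follows the paper's argument. The steps match: $\Sing(\co(\Omega))=\Gamma^{-1}(1\otimes\co(H)\otimes 1)$ with $H$ an even torus, a weight count leaves only the monomial $w_\lambda$ at the hook weight, and $\alpha_i,\beta_j^{(m)}$ are computed by Schur complements as in Proposition~\ref{prop:glab}.

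Polynomiality of $w_\lambda$ is not an obstacle. Howe duality gives $0\neq H_\lambda^{(m)}\subseteq\bc w_\lambda$, so your ``fallback'' is the whole proof, exactly as in Theorem~\ref{thm:highest} and Corollary~\ref{cor:polynomiality}. Drop the direct degree count: for $m=2$ and $\lambda=(2,2,2,2)$, your bound $m\mu_1=4$ exceeds the available exponent $\lambda_m=2$ of $\det\bfX$, so polynomiality there comes from cancellation, not from counting. Also, the $y_{kl}$ are even; it is the $\xi,\eta$ that are odd.
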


\subsection*{Structure of the paper}

The paper is organized as follows. Section~\ref{sec:prelim} collects the necessary preliminaries on Clifford algebras, Hopf superalgebras, and the queer supergroup \(Q(n)\). Section~\ref{sec:queer-howe} recalls the Howe duality for the queer Lie supaeralgebra and construct natural Clifford algebra module structures on the highest weight spaces. Section~\ref{sec:regular} constructs the regular representations on \(\co(\Omega)\). Section~\ref{sec:singular} determines the singular vectors in \(\co(\Omega)\) and proves the basis theorem for the highest weight vectors in Howe duality. Section~\ref{sec:gl-case} applies the same argument to the \((\fgl(m|q),\fgl(m|n))\)-Howe duality, and shows that the resulting highest weight vectors recover the explicit formulas of Cheng and Wang.

\subsection*{Notation}

Unless otherwise stated, we work over the field \(\bc\) of complex numbers throughout the paper. We write \(\bz\), \(\bz_+\), and \(\bn\) for the sets of integers, nonnegative integers, and positive integers, respectively. We denote by \(\delta_{ij}\) the Kronecker delta.

For a partition \(\lambda\), we denote by \(\lambda'\) its conjugate partition.

Set \(\bz_2=\{\bar{0},\bar{1}\}\). For a homogeneous element \(x\) of an associative superalgebra or Lie superalgebra, we denote by \(|x|\in\bz_2\) its parity. Whenever \(|x|\) occurs, \(x\) is always assumed to be homogeneous, and formulas involving parity are extended by linearity. We write \([-,-]\) for the supercommutator.

For a superalgebra \(R\), we write \(M_{s,t}(R)\) for the vector space of \(s\times t\) matrices with entries in \(R\), and \(M_n(R)\) for the superalgebra of \(n\times n\) matrices with entries in \(R\). A superalgebra \(R\) is called \emph{commutative} if \(xy=(-1)^{|x||y|}yx\) for \(x,y\in R\).

Let \(A\) and \(B\) be semisimple superalgebras. If \(M\) is a simple \(A\)-module of type \(\mathtt{Q}\) and \(N\) is a simple \(B\)-module of type \(\mathtt{Q}\), then the \(A\otimes B\)-module \(M\otimes N\) is the direct sum of two isomorphic copies of a simple module of type \(\mathtt{M}\), denoted by \(2^{-1}(M\otimes N)\).

For a matrix \(M\) and index sets \(I,J\), we denote by \(M_{I,J}\) the submatrix of \(M\) with rows indexed by \(I\) and columns indexed by \(J\). For a positive integer \(n\), we write \([n]=\{1,2,\ldots,n\}\). In particular, \(M_{[r],[r]}\) denotes the leading \(r\times r\) principal submatrix of \(M\).

\section{Preliminaries}\label{sec:prelim}

\subsection{Clifford algebras}

We begin with some basic facts about Clifford algebras that will be used later; see for instance \cite[Chapter~II]{MR60497}.

The \emph{Clifford algebra} \(\Cl_{2\ell}\) is the associative superalgebra with odd generators
\[
c_1^+,\ldots,c_\ell^+,\ c_1^-,\ldots,c_\ell^-
\]
and defining relations
\[
c_i^+c_j^+ + c_j^+c_i^+ = 0,\quad
c_i^-c_j^- + c_j^-c_i^- = 0,\quad
c_i^+c_j^- + c_j^-c_i^+ = \delta_{ij},
\quad (1\leq i,j\leq \ell).
\]
An irreducible \(\Cl_{2\ell}\)-module is called a \emph{spinor module}. The spinor module is unique up to isomorphism; we denote it by \(\cs\).

Let \(C^+\) and \(C^-\) be the subalgebras of \(\Cl_{2\ell}\) generated by \(c_1^+,\ldots,c_\ell^+\) and \(c_1^-,\ldots,c_\ell^-\), respectively. Set \(\psi_0:=c_1^-c_2^-\cdots c_\ell^-\). Then the minimal left ideal \(\Cl_{2\ell}\psi_0=C^+\psi_0\) realizes the spinor module. It has a basis given by
\[
\{c_{i_1}^+\cdots c_{i_r}^+\psi_0\mid
\ 1\leq i_1<\cdots<i_r\leq \ell,0\leq r\leq \ell\}.
\]
Thus, \(\dim\cs=2^\ell\).

For a spinor module \(\cs\), define
\[
\cs^{\pure}:=\{s\in \cs\mid c_i^-s=0,\ \forall 1\leq i\leq \ell\}.
\]
The nonzero elements of \(\cs^{\pure}\) are called \emph{pure spinors} with respect to \(c_1^-,\ldots,c_\ell^-\). The subspace \(\cs^{\pure}\) is one-dimensional; in the realization above, it is \(\bc\psi_0\). For any pure spinor \(\psi\) with respect to \(c_1^-,\ldots,c_\ell^-\), one has \(\cs=C^+\psi\), and a basis is given by
\begin{equation}\label{eq:basis}
\{c_{i_1}^+\cdots c_{i_r}^+\psi\mid
1\leq i_1<\cdots<i_r\leq \ell,\ 0\leq r\leq \ell\}.
\end{equation}

\subsection{Hopf superalgebras}

Here are some basic facts about Hopf superalgebras.

A \emph{Hopf superalgebra} is a Hopf algebra in the category of vector superspaces, with braiding given by the Koszul sign rule. For a Hopf superalgebra \(H\), we write
\[
\mu:H\otimes H\to H,\ \Delta:H\to H\otimes H,\ \iota:\bc\to H,\ \epsilon:H\to\bc,\ S:H\to H,
\]
for its multiplication, comultiplication, unit, counit, and antipode, respectively. We use sumless Sweedler notation throughout the paper.

Let \(H\) and \(K\) be Hopf superalgebras. An even linear map
\[
\lra{-}{-}:H\otimes K\to\bc
\]
is called a \emph{Hopf pairing} if, for \(x,y\in H\) and \(a,b\in K\),
\begin{align*}
&\lra{xy}{a}=\lra{x\otimes y}{\Delta(a)}=(-1)^{|y||a_{(1)}|}\lra{x}{a_{(1)}}\lra{y}{a_{(2)}},\\
&\lra{x}{ab}=\lra{\Delta(x)}{a\otimes b}=(-1)^{|x_{(2)}||a|}\lra{x_{(1)}}{a}\lra{x_{(2)}}{b},\\
&\lra{1}{a}=\epsilon(a),\quad\lra{x}{1}=\epsilon(x).
\end{align*}
These conditions imply antipode compatibility
\[
\lra{S(x)}{a}=\lra{x}{S(a)},\quad x\in H,a\in K.
\]

\subsection{Queer Lie superalgebra}

Let \(\bc^{n|n}\) denote the vector superspace of dimension \(n|n\). We define
\[
\fq(n):=\left\{
\begin{pmatrix}
A&B\\B&A
\end{pmatrix}
\in \fgl(n|n):
A,B\in M_n(\bc)
\right\}.
\]
This is a subalgebra of \(\fgl(n|n)\), called the \emph{queer Lie superalgebra}.

For \(1\leq i,j\leq n\), let \(e_{ij}\in M_n(\bc)\) be the matrix unit, with \(1\) in the \((i,j)\)-entry and \(0\) elsewhere. Define
\[
E_{ij}:=
\begin{pmatrix}
e_{ij}&0\\0&e_{ij}
\end{pmatrix},
\quad F_{ij}:=
\begin{pmatrix}
0&e_{ij}\\e_{ij}&0
\end{pmatrix}
\quad(1\leq i,j\leq n).
\]
Then \(\{E_{ij}\mid 1\leq i,j\leq n\}\) and \(\{F_{ij}\mid 1\leq i,j\leq n\}\) are bases of \(\fq(n)_{\bar 0}\) and \(\fq(n)_{\bar 1}\), respectively. Set
\begin{align*}
\fh_{\bar 0}:=&\Span\{E_{ii}\mid 1\leq i\leq n\},\quad
\fh_{\bar 1}:=\Span\{F_{ii}\mid 1\leq i\leq n\},\quad
\fh:=\fh_{\bar 0}\oplus\fh_{\bar 1},\\
\fn^+:=&\Span\{E_{ij},F_{ij}\mid 1\leq i<j\leq n\},\quad
\fn^-:=\Span\{E_{ij},F_{ij}\mid 1\leq j<i\leq n\}.
\end{align*}
Then \(\fh\) is the \emph{Cartan subalgebra} of \(\fq(n)\), and
\[
\fq(n)=\fn^-\oplus\fh\oplus\fn^+
\]
is the standrad triangular decomposition of \(\fq(n)\).

Let \(\fb:=\fh\oplus\fn^+\) be the \emph{Borel subalgebra} of \(\fq(n)\). For \(\lambda=(\lambda_1,\ldots,\lambda_n)\in\bc^n\), we regard \(\lambda\) as an element of \(\fh_{\bar 0}^*\) by setting
\[
\lambda(E_{ii})=\lambda_i\quad (1\leq i\leq n).
\]
If \(M\) is a \(\fq(n)\)-module, its \(\lambda\)-weight space is defined by
\[
M_\lambda:=\{m\in M\mid E_{ii}m=\lambda_i m,1\leq i\leq n\}.
\]

Unlike the case of semisimple Lie algebras, the Cartan subalgebra \(\fh\) of \(\fq(n)\) is non-abelian. For each \(\lambda\in\fh_{\bar 0}^*\), the action of \(\fh_{\bar 1}\) on a highest weight space is governed by the Clifford structure associated with \(\lambda\); see \cite[Section~1.5.4]{MR3012224}. Let \(W_\lambda\) denote the irreducible \(\fh\)-module with \(\fh_{\bar 0}\)-weight \(\lambda\). We regard \(W_\lambda\) as a \(\fb\)-module by letting \(\fn^+\) act trivially.

The Verma module of highest weight \(\lambda\) is defined by
\[
M(\lambda):=U(\fq(n))\otimes_{U(\fb)}W_\lambda.
\]
It has a unique irreducible quotient, denoted by \(L(\lambda)\). Moreover, \(L(\lambda)\) is finite-dimensional if and only if
\[
\lambda_i-\lambda_{i+1}\in\bz_{+}
\quad\text{and}\quad
\lambda_i=\lambda_{i+1}\Rightarrow \lambda_i=0,
\]
for all \(1\leq i\leq n-1\).

\subsection{Queer supergroup}

\subsubsection{}

Let \(\mathbf{SAlg}\) denote the category of commutative superalgebras, and let \(\mathbf{Set}\) denote the category of sets. If \(X=\spec R\) is an affine superscheme, we use its functor of points
\[
h_X:\mathbf{SAlg}\to\mathbf{Set},\quad
A\mapsto \Hom_{\mathbf{SAlg}}(R,A),\quad
h_X(\phi)(f)=\phi\circ f.
\]
We write \(\co(X)\) for the coordinate superalgebra of \(X\). In this paper, a supergroup means an affine supergroup scheme, namely an affine superscheme whose functor of points is group-valued. Equivalently, its coordinate superalgebra is a commutative Hopf superalgebra; see \cite{MR2840967} for more details.

Let \(\cx(n)\) be the affine superspace isomorphic to \(\ba^{n^2|n^2}\), defined by the following functor of points. For \(A\in\mathbf{SAlg}\), set
\[
\cx(n)(A):=\left\{
\begin{pmatrix}
a&b\\b&a
\end{pmatrix}
\;\middle|\;a\in M_n(A_{\bar 0}),\ b\in M_n(A_{\bar 1})\right\}.
\]
Its coordinate superalgebra is
\[
\co(\cx(n))=\bc[x_{ij},y_{ij}\mid 1\leq i,j\leq n],
\]
where the \(x_{ij}\) are even indeterminates and the \(y_{ij}\) are odd indeterminates.

The queer supergroup \(Q(n)\) is the subfunctor of \(\cx(n)\) defined by
\[
Q(n)(A):=\left\{
\begin{pmatrix}
a&b\\b&a
\end{pmatrix}
\in\cx(n)(A)\;\middle|\; a\ \text{is invertible}\right\}.
\]
It is a supergroup whose coordinate Hopf superalgebra is
\[
\co(Q(n))=\bc[x_{ij},y_{ij}\mid 1\leq i,j\leq n]_{d_n},
\]
where \(d_n:=\det(x_{ij})_{1\leq i,j\leq n}\). The Hopf superalgebra structure on \(\co(Q(n))\) is determined by the multiplication, identity, and inverse in the functor \(Q(n)\). On \(x_{ij},y_{ij}\,(1\leq i,j\leq n)\), the structure maps are given as follows. The comultiplication is given by
\[
\Delta(x_{ij})=\sum_{k=1}^n (x_{ik}\otimes x_{kj}+y_{ik}\otimes y_{kj}),\quad
\Delta(y_{ij})=\sum_{k=1}^n (x_{ik}\otimes y_{kj}+y_{ik}\otimes x_{kj}).
\]
The counit is given by
\[
\epsilon(x_{ij})=\delta_{ij},\quad \epsilon(y_{ij})=0.
\]
The antipode is given by
\[
S(x_{ij})=((X-YX^{-1}Y)^{-1})_{ij},\quad S(y_{ij})=(-X^{-1}Y(X-YX^{-1}Y)^{-1})_{ij},
\]
where
\[
X=(x_{ij})_{1\leq i,j\leq n},\,Y=(y_{ij})_{1\leq i,j\leq n}\in M_n(\co(Q(n))).
\]

\subsubsection{}

We define three subgroups of \(Q(n)\): the upper unipotent subgroup \(U^+\), the lower unipotent subgroup \(U^-\), and the diagonal subgroup \(H\). For \(A\in\mathbf{SAlg}\), every element of \(Q(n)(A)\) has the form
\[
\begin{pmatrix}
a & b\\b & a
\end{pmatrix},
\quad a\in \GL_n(A_{\bar 0}),\quad b\in M_n(A_{\bar 1}).
\]
We define their \(A\)-points by imposing the following conditions:
\[
\begin{array}{lll}
U^+(A):& a\text{ is upper unipotent triangular},& b\text{ is strictly upper triangular},\\
U^-(A):& a\text{ is lower unipotent triangular},& b\text{ is strictly lower triangular},\\
H(A):& a\text{ is diagonal},& b\text{ is diagonal}.
\end{array}
\]

The coordinate Hopf superalgebras of \(U^\pm\) and \(H\) are obtained as quotients of \(\co(Q(n))\). Let
\begin{align*}
I_+&=\left(
\{x_{ii}-1\mid 1\leq i\leq n\}
\cup \{x_{ji},y_{ji}\mid 1\leq i<j\leq n\}
\cup \{y_{ii}\mid 1\leq i\leq n\}
\right),\\
I_-&=\left(
\{x_{ii}-1\mid 1\leq i\leq n\}
\cup \{x_{ij},y_{ij}\mid 1\leq i<j\leq n\}
\cup \{y_{ii}\mid 1\leq i\leq n\}
\right),\\
I_H&=\left(
\{x_{ij},x_{ji},y_{ij},y_{ji}\mid 1\leq i<j\leq n\}
\right)
\end{align*}
be the corresponding ideals of \(\co(Q(n))\). Then
\[
\co(U^+)=\co(Q(n))/I_+,\quad
\co(U^-)=\co(Q(n))/I_-,\quad
\co(H)=\co(Q(n))/I_H.
\]

For \(1\leq k\leq n\), set \(d_k:=\det(x_{ij})_{1\leq i,j\leq k}\). We denote the principal open affine subscheme of \(Q(n)\) defined by \(d_1\cdots d_{n-1}\) by \(\Omega\), and call it the \emph{big cell}. Equivalently, \(\Omega\) is the principal open affine subscheme of \(\cx(n)\) defined by \(d_1\cdots d_n\); so its coordinate superalgebra is
\[
\co(\Omega)=\bc[x_{ij},y_{ij}\mid 1\leq i,j\leq n]_{d_1\cdots d_n}.
\]
Then the corresponding \emph{Gauss decomposition} is as follows.

\begin{theorem}[{cf. \cite[Theorem~3.5]{MR1973576}}]\label{thm:gauss}
The multiplication morphism of \(Q(n)\) restricts to an isomorphism of affine superschemes
\[
\gamma:U^-\times H\times U^+\overset{\sim}{\to}\Omega.
\]
We denote the pullback of \(\gamma\) by
\[
\Gamma:\co(\Omega)\overset{\sim}{\to}\co(U^-)\otimes \co(H)\otimes\co(U^+).
\]
\end{theorem}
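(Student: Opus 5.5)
We prove the Gauss decomposition by constructing an explicit inverse morphism of functors of points and checking that it is inverse to multiplication; by Yoneda this yields the isomorphism of affine superschemes and hence the isomorphism $\Gamma$ of coordinate superalgebras. Since $U^-$, $H$, $U^+$ and $\Omega$ are all defined by functors $\mathbf{SAlg}\to\mathbf{Set}$, it suffices to show that for every $A\in\mathbf{SAlg}$ the map
\[
\gamma_A:U^-(A)\times H(A)\times U^+(A)\to\Omega(A),\qquad (u,h,v)\mapsto uhv,
\]
is a bijection, natural in $A$, and that the inverse is given by polynomial expressions in $x_{ij},y_{ij}$ and $(d_1\cdots d_n)^{-1}$.

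The key device is the identification of $\cx(n)(A)$ with $M_n(A[\theta])$, where $A[\theta]=A\otimes\bc[\theta]$ and $\theta$ is an odd element with $\theta^2=-1$ (equivalently, one may work with $2\times2$ blocks $Z_{ij}=\begin{pmatrix}x_{ij}&y_{ij}\\y_{ij}&x_{ij}\end{pmatrix}$ as in Theorem A). Under $\begin{pmatrix}a&b\\b&a\end{pmatrix}\mapsto a+\theta b$, multiplication in $Q(n)$ corresponds to matrix multiplication in $M_n(R)$ with $R=A[\theta]$, an associative (noncommutative) ring whose even part contains $A_{\bar0}$. Concretely, $\begin{pmatrix}a&b\\b&a\end{pmatrix}$ is viewed as an $n\times n$ matrix with entries in the ring $\mathcal{Q}(A)$ of $2\times 2$ blocks $\begin{pmatrix}\alpha&\beta\\\beta&\alpha\end{pmatrix}$, $\alpha\in A_{\bar0}$, $\beta\in A_{\bar1}$. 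Such a block is invertible in $\mathcal{Q}(A)$ iff $\alpha$ is invertible, since $\beta^2=0$ for odd $\beta$ in a supercommutative algebra, so $\begin{pmatrix}\alpha&\beta\\\beta&\alpha\end{pmatrix}\begin{pmatrix}\alpha^{-1}&-\alpha^{-2}\beta\\-\alpha^{-2}\beta&\alpha^{-1}\end{pmatrix}=1$. The subgroups $U^\pm$ and $H$ become unipotent lower/upper triangular and diagonal matrices over $\mathcal{Q}(A)$.

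We then apply the standard LDU (Gauss elimination) argument over the noncommutative ring $\mathcal{Q}(A)$ by induction on $n$. Write $g=\begin{pmatrix}g_{11}&g_{12}\\g_{21}&g_{22}\end{pmatrix}$ with $g_{11}$ the leading $(n-1)\times(n-1)$ block. If $g_{11}$ is invertible, then
\[
g=\begin{pmatrix}1&0\\g_{21}g_{11}^{-1}&1\end{pmatrix}\begin{pmatrix}g_{11}&0\\0&s\end{pmatrix}\begin{pmatrix}1&g_{11}^{-1}g_{12}\\0&1\end{pmatrix},\qquad s=g_{22}-g_{21}g_{11}^{-1}g_{12},
\]
and any decomposition $g=uhv$ is forced to be of this shape, giving uniqueness. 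Induction applied to $g_{11}$ then gives existence and uniqueness of the full decomposition. The Schur complement $s$ is exactly the block defining $a_n,b_n$ in Theorem A.

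The main obstacle is to translate invertibility conditions correctly: one must show that $g\in\Omega(A)$, i.e. each $d_k$ is invertible in $A_{\bar0}$, iff all successive leading blocks are invertible over $\mathcal{Q}(A)$, iff all the diagonal Schur-complement entries $s_k=a_k+\theta b_k$ are invertible. For this, reduce modulo the ideal generated by $A_{\bar1}$. A matrix over $\mathcal{Q}(A)$ is invertible iff its image in $M_n(A_{\bar0}/(A_{\bar1}^2))$ is invertible, because the kernel of the reduction is a nilpotent ideal. Modulo odd elements, $g$ reduces to $a$ with its classical Gauss decomposition, where $a_k\equiv d_k/d_{k-1}$. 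Hence each $s_k$ is invertible iff $d_k$ is invertible given that $d_{k-1}$ is, and inductively iff $d_1,\dots,d_k$ are all invertible. Finally, the entries of $u,h,v$ are given by universal formulas in $x_{ij},y_{ij},d_k^{-1}$, natural in $A$, so the inverse map is a morphism $\Omega\to U^-\times H\times U^+$. This yields $\gamma$ as an isomorphism and $\Gamma$ as its pullback.
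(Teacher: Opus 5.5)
Your argument is correct, but note that the paper gives no proof of this statement: it imports it from Brundan--Kleshchev \cite[Theorem~3.5]{MR1973576}. You instead give a self-contained proof on $A$-points. You identify $\cx(n)(A)$ with $M_n(\mathcal{Q}(A))$ (strictly, with matrices over the even subring $A_{\bar0}\oplus\theta A_{\bar1}$ of $A\otimes\bc[\theta]$, not over all of $A[\theta]$), run block Gauss elimination over this noncommutative ring, and match invertibility of the leading blocks with invertibility of $d_1,\ldots,d_n$ by reducing modulo odd elements. The gain is that existence, uniqueness and the explicit diagonal factor $\diag(s_1,\ldots,s_n)$ come out at once. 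Proposition~\ref{prop:formula} then becomes part of the proof, rather than a later step relying on invariance of Schur complements. The citation is shorter, and the paper already depends on \cite{MR1973576} for $\Dist(Q(n))\cong U(\fq(n))$ and for nondegeneracy of the pairing. One justification needs repair. For an arbitrary commutative superalgebra $A$, the ideal $J=A_{\bar1}A_{\bar1}\subset A_{\bar0}$ is nil but need not be nilpotent (take an exterior algebra on infinitely many odd generators), so ``the kernel of the reduction is a nilpotent ideal'' is false as stated. The conclusion survives: each element of $J$ is a finite sum of commuting square-zero elements, hence nilpotent. Moreover, any single matrix in the kernel has entries in a finitely generated subsuperalgebra, where the odd elements do generate a nilpotent ideal. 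In fact your induction only needs that $x\in A_{\bar0}^\times$ if and only if $x+j\in A_{\bar0}^\times$ for $j\in J$, applied to the congruence $\alpha_k d_{k-1}\equiv d_k \pmod J$, where $\alpha_k$ is the even entry of $s_k$.
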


\subsubsection{}

We denote by \(I_e:=\ker\epsilon\) the \emph{augmentation ideal} of \(\co(Q(n))\). The \emph{algebra of distributions} on \(Q(n)\) is defined by
\[
\Dist(Q(n)):=\bigcup_{r\geq 0}(\co(Q(n))/I_e^{r+1})^*
\subseteq \co(Q(n))^*.
\]
It inherits a Hopf superalgebra structure from \(\co(Q(n))\) by duality. The Lie superalgebra \(\Lie(Q(n)):=(I_e/I_e^2)^*\) embeds naturally into \(\Dist(Q(n))\), and its bracket is the restriction of the supercommutator in \(\Dist(Q(n))\).

The residue classes of the elements in
\[
\{x_{ij}-\delta_{ij},\, y_{ij}\mid 1\leq i,j\leq n\}
\]
constitute a basis of \(I_e/I_e^2\). Let
\[
\{E'_{ij},\, F'_{ij}\mid 1\leq i,j\leq n\}
\]
be the corresponding dual basis, viewed as a basis of \(\Lie(Q(n))\). They satisfy
\begin{align*}
[E'_{ij},E'_{kl}]=&\delta_{jk}E'_{il}-\delta_{li}E'_{kj},&&[E'_{ij},F'_{kl}]=\delta_{jk}F'_{il}-\delta_{li}F'_{kj},\\
[F'_{ij},F'_{kl}]=&-\delta_{jk}E'_{il}-\delta_{li}E'_{kj}&&(1\leq i,j,k,l\leq n).
\end{align*}
Hence, the assignment
\[
E_{ij}\mapsto E'_{ij},\quad F_{ij}\mapsto \sqrt{-1} F'_{ij}\quad(1\leq i,j\leq n)
\]
defines a Lie superalgebra isomorphism \(\fq(n)\overset{\sim}{\to}\Lie(Q(n))\). Via this isomorphism, we identify \(\Lie(Q(n))\) with \(\fq(n)\).

The subgroups \(U^+\), \(U^-\), and \(H\) are closed in \(Q(n)\), and their Lie superalgebras are subalgebras of \(\Lie(Q(n))=\fq(n)\). More explicitly, we have the following descriptions.
\begin{align*}
\Lie(U^+)=&\Span\{E'_{ij},F'_{ij}\mid 1\leq i<j\leq n\}=\fn^+,\\
\Lie(U^-)=&\Span\{E'_{ji},F'_{ji}\mid 1\leq i<j\leq n\}=\fn^-,\\
\Lie(H)=&\Span\{E'_{ii},F'_{ii}\mid 1\leq i\leq n\}=\fh.
\end{align*}

By Theorem~4.4 of \cite{MR1973576}, the embedding \(\fq(n)\overset{\sim}{\to}\Lie(Q(n))\subseteq \Dist(Q(n))\) induces an isomorphism of Hopf superalgebras
\[
U(\fq(n))\overset{\sim}{\to}\Dist(Q(n)).
\]
Through this isomorphism, we identify \(U(\fq(n))\) with the Hopf superalgebra \(\Dist(Q(n))\). The evaluation map then defines a Hopf pairing
\[
\lra{-}{-}:\co(Q(n))\otimes U(\fq(n))\to\bc,\quad a\otimes u\mapsto u(a).
\]

\begin{proposition}\label{prop:nond}
The Hopf pairing defined by the evaluation map is nondegenerate.
\end{proposition}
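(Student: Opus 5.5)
We need both halves of nondegeneracy: the left kernel (elements of \(\co(Q(n))\) killed by all of \(U(\fq(n))\)) and the right kernel (elements of \(U(\fq(n))\) killing all of \(\co(Q(n))\)) are zero.

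\emph{The right kernel.} This side is essentially built into the construction. Via the identification \(U(\fq(n))\cong\Dist(Q(n))\subseteq\co(Q(n))^*\) given by Theorem~4.4 of \cite{MR1973576}, the pairing \(a\otimes u\mapsto u(a)\) is literally evaluation of a functional on \(\co(Q(n))\). A distribution \(u\) with \(u(a)=0\) for all \(a\) is therefore the zero functional. So nothing beyond the injectivity of \(U(\fq(n))\to\co(Q(n))^*\) is needed, and that injectivity is part of the quoted isomorphism.

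\emph{The left kernel.} We must show that if \(a\in\co(Q(n))\) satisfies \(u(a)=0\) for all \(u\in\Dist(Q(n))\), then \(a=0\). Since \(\Dist(Q(n))\) is the union of the duals \((\co(Q(n))/I_e^{r+1})^*\), the condition says exactly that \(a\in\bigcap_{r}I_e^{r+1}\). It therefore suffices to prove the Krull-type statement \(\bigcap_r I_e^r=0\) in \(\co(Q(n))\).

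To prove this, I would pass to the local ring at the identity. The ring \(R=\co(Q(n))\) is a Noetherian commutative superalgebra of the form \(R_0[y_{ij}]\) with \(R_0=\bc[x_{ij}]_{d_n}\), that is, an exterior algebra over the integral domain \(R_0\) on the \(n^2\) odd generators. The ideal \(I_e\) is generated by the \(x_{ij}-\delta_{ij}\) and the \(y_{ij}\). Because \(y\)-monomials of degree greater than \(n^2\) vanish, for \(r>n^2\) we get
\[
I_e^{r}\subseteq \sum_{k=0}^{n^2}\mathfrak m_0^{\,r-k}\,\Lambda^k,
\]
where \(\mathfrak m_0\subset R_0\) is the maximal ideal of the identity matrix and \(\Lambda^k\) is the span of the \(y\)-monomials of degree \(k\). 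Write \(a=\sum_S a_S y_S\) with \(a_S\in R_0\), using the free \(R_0\)-basis of monomials \(y_S\). Then \(a\in\bigcap_r I_e^r\) forces each coefficient \(a_S\) to lie in \(\bigcap_r\mathfrak m_0^r\). By the Krull intersection theorem in the domain \(R_0\), this intersection is \(0\), so every \(a_S=0\) and \(a=0\).

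The main obstacle is making the coefficientwise reduction rigorous, since \(I_e^r\) mixes even and odd parts. I would handle this by noting that \(R\) is graded in \(y\)-degree, with \(\mathfrak m_0\) sitting in degree \(0\), so that \(I_e^r=\bigoplus_k \mathfrak m_0^{\,r-k}\Lambda^k\) exactly, with \(\mathfrak m_0^{j}=R_0\) for \(j\le0\). This equality follows from expanding products of the generators. Comparing components of degree \(k\) then gives \(a_S\in\mathfrak m_0^{\,r-|S|}\) for every \(r\), hence \(a_S\in\bigcap_j\mathfrak m_0^j=0\). With both kernels trivial, the pairing is nondegenerate.
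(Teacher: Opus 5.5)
Your proof is correct. On the right kernel it is the same as the paper's: the inclusion \(\Dist(Q(n))\subseteq\co(Q(n))^*\) together with the identification \(U(\fq(n))\cong\Dist(Q(n))\).

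On the left kernel you take a different, self-contained route. The paper simply cites Section~5 of \cite{MR1973576} for the injectivity of \(\co(Q(n))\to\Dist(Q(n))^*\). You instead note that this injectivity is equivalent to \(\bigcap_r I_e^r=0\), and you prove that directly:
\begin{itemize}
\item you use the splitting \(\co(Q(n))=R_0\otimes\Lambda(y_{ij})\) with \(R_0=\bc[x_{ij}]_{d_n}\) a Noetherian domain;
\item you derive the exact formula \(I_e^r=\bigoplus_k\mathfrak m_0^{\max(r-k,0)}\Lambda^k\), which follows from \(I_e^r=\sum_j(\mathfrak m_0R_0)^{r-j}Y^j\) and \(Y^j=\bigoplus_{k\geq j}R_0\Lambda^k\) for the ideal \(Y\) generated by the \(y_{ij}\);
\item you finish with Krull's intersection theorem applied coefficientwise.
\end{itemize}

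The citation is shorter. Your argument shows that the only input needed is the structure ``Noetherian even domain tensor a finite exterior algebra.'' It therefore applies without change to \(\co(U^\pm)\) and \(\co(H)\), for which the paper only says that ``the same argument applies,'' and also to \(\co(\GL(m|n))\) in Section~\ref{sec:gl-case}.
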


\begin{proof}
Identifying \(U(\fq(n))\) with \(\Dist(Q(n))\), right nondegeneracy follows from the inclusion \(\Dist(Q(n))\subseteq\co(Q(n))^*\). For left nondegeneracy, Brundan--Kleshchev prove in Section~5 of \cite{MR1973576} that the evaluation map
\[
\co(Q(n))\to \Dist(Q(n))^*,\quad
f\mapsto [u\mapsto u(f)]
\]
is injective. Hence, the pairing separates elements of \(\co(Q(n))\).
\end{proof}

The same argument applies to the closed subgroups \(U^\pm\) and \(H\). Namely, after identifying
\[
U(\fn^\pm)\overset{\sim}{\to} \Dist(U^\pm),\quad U(\fh)\overset{\sim}{\to} \Dist(H),
\]
the evaluation maps define nondegenerate Hopf pairings
\[
\lra{-}{-}:\co(U^\pm)\otimes U(\fn^\pm)\to\bc,\quad
\lra{-}{-}:\co(H)\otimes U(\fh)\to\bc.
\]

\section{Howe duality for the queer Lie superalgebra}\label{sec:queer-howe}

We first recall the Howe duality for the queer Lie superalgebra. Let \(\bc^{m|m}\) and \(\bc^{n|n}\) be the natural representations of \(\fq(m)\) and \(\fq(n)\), respectively. We write \(\bc^{mn|mn}\) for \(2^{-1}(\bc^{m|m}\otimes\bc^{n|n})\) as a \(\fq(m)\oplus\fq(n)\)-module.

\begin{theorem}[{see \cite[Theorem~3.1]{MR1786858},\cite[Theorem~9.1]{MR1827078}}]\label{thm:howe-duality}
As a \(\fq(m)\oplus\fq(n)\)-module, the symmetric superalgebra \(S(\bc^{mn|mn})\) admits the multiplicity-free decomposition
\[
S(\bc^{mn|mn})\cong
\bigoplus_{\substack{\lambda\in\mathcal{SP}\\ \ell(\lambda)\leq \min(m,n)}}
2^{-\delta(\lambda)}L_m(\lambda)\otimes L_n(\lambda),
\]
where \(L_r(\lambda)\) denotes the irreducible \(\fq(r)\)-module of highest weight \(\lambda\), and \(\delta(\lambda)\) is the parity of \(\ell(\lambda)\).
\end{theorem}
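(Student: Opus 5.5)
We plan to prove Theorem~\ref{thm:howe-duality} in two stages. First we show that \(S(\bc^{mn|mn})\) is completely reducible, with every summand of the form \(L_m(\lambda)\otimes L_n(\mu)\) for strict partitions. Then we identify the summands and their multiplicities by a double-centralizer argument, using the Cauchy identity for Schur \(Q\)-functions as a check on the multiplicities.

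For the first stage, fix a degree \(d\). Since \(\bc^{mn|mn}\) is a direct summand of \(\bc^{m|m}\otimes\bc^{n|n}\), the space \(S^d(\bc^{mn|mn})\) is a direct summand of \((\bc^{m|m})^{\otimes d}\otimes(\bc^{n|n})^{\otimes d}\). We then invoke Sergeev's duality: the actions of \(\fq(n)\) and of the Sergeev (Hecke--Clifford) algebra \(\mathcal{H}_d=\Cl_d\rtimes\bc S_d\) on \((\bc^{n|n})^{\otimes d}\) generate each other's supercentralizers, and they give a decomposition
\[
(\bc^{n|n})^{\otimes d}\cong\bigoplus_{\lambda\vdash d\ \text{strict},\ \ell(\lambda)\leq n}2^{-\delta(\lambda)}L_n(\lambda)\otimes D^\lambda .
\]
Here \(D^\lambda\) is the simple \(\mathcal{H}_d\)-supermodule, of type \(\mathtt{Q}\) exactly when \(\ell(\lambda)\) is odd. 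Consequently every polynomial \(\fq(n)\)-module of degree \(d\) is semisimple with constituents \(L_n(\lambda)\), and the same holds for \(\fq(m)\) and for the pair \(\fq(m)\oplus\fq(n)\).

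For the second stage, we realize \(S^d(\bc^{mn|mn})\) as a space of coinvariants. The fiber \(\bc^{mn|mn}\) is \(\bc^{m|m}\otimes_{\Cl_1}\bc^{n|n}\), where the odd operators \(J\) on the two factors, which commute with \(\fq\), are glued via the \(\Cl_1\)-action. Taking \(d\)-fold tensor powers and then symmetrizing, we get
\[
S^d(\bc^{mn|mn})\cong (\bc^{m|m})^{\otimes d}\otimes_{\mathcal{H}_d}(\bc^{n|n})^{\otimes d},
\]
where the left factor is made into a right \(\mathcal{H}_d\)-module via the antipode. Substituting the Sergeev decompositions of both factors reduces the computation to \(D^\lambda\otimes_{\mathcal{H}_d}D^\mu\). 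This is zero unless \(\lambda=\mu\), by Schur's lemma and the semisimplicity of \(\mathcal{H}_d\). When \(\lambda=\mu\), it has superdimension \(1\) or \(1|1\), according as \(D^\lambda\) has type \(\mathtt{M}\) or \(\mathtt{Q}\).

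Combining the factors \(2^{-\delta(\lambda)}\) from the two sides with the type of \(D^\lambda\otimes_{\mathcal{H}_d}D^\lambda\), we expect to obtain exactly \(2^{-\delta(\lambda)}L_m(\lambda)\otimes L_n(\lambda)\). In this summand each \(\lambda\) with \(\ell(\lambda)\leq\min(m,n)\) appears once, and no other summands occur. As an independent check, the characters must satisfy the super Cauchy identity
\[
\prod_{i,j}\frac{1+x_iy_j}{1-x_iy_j}=\sum_{\lambda}2^{-\ell(\lambda)}Q_\lambda(x)Q_\lambda(y),
\]
together with \(\mathrm{ch}\,L_r(\lambda)=2^{-(\ell(\lambda)+\delta(\lambda))/2}Q_\lambda\). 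Because polynomial \(\fq(r)\)-modules are determined by their characters, this identity alone would also determine the decomposition.

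We expect the main obstacle to be the bookkeeping of the powers of \(2\): the precise meaning of the tensor product over \(\Cl_d\), and the type \(\mathtt{M}\) versus \(\mathtt{Q}\) behavior of the various tensor products. We would fix these factors first in the case \(d=1\), and then use the character identity to confirm the general case.
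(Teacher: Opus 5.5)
The paper does not prove this theorem. It quotes it from Cheng--Wang and Sergeev and uses it as a black box (for instance for $\dim H_\lambda=2^\ell$), so there is no in-paper argument to compare against. Your route is standard and workable: complete reducibility from Sergeev duality, followed by $S^d(\bc^{mn|mn})\cong(\bc^{m|m})^{\otimes d}\otimes_{\mathcal{H}_d}(\bc^{n|n})^{\otimes d}$. The powers of $2$ you worried about do close up. Write $L_m(\lambda)\otimes D^\lambda\cong 2^{\delta(\lambda)}X_\lambda$ and $D^\lambda\otimes L_n(\lambda)\cong 2^{\delta(\lambda)}Y_\lambda$ with $X_\lambda,Y_\lambda$ simple. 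Granting $\dim D^\lambda\otimes_{\mathcal{H}_d}D^\lambda=2^{\delta(\lambda)}$, a dimension count gives $\dim X_\lambda\otimes_{\mathcal{H}_d}Y_\lambda=2^{-\delta(\lambda)}\dim L_m(\lambda)\dim L_n(\lambda)$. That is exactly one copy of $2^{-\delta(\lambda)}L_m(\lambda)\otimes L_n(\lambda)$.

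However, the claim $D^\lambda\otimes_{\mathcal{H}_d}D^\mu=0$ for $\lambda\neq\mu$ does not follow from Schur's lemma and semisimplicity alone. The same goes for the value $1$ or $1|1$ when $\lambda=\mu$, which the count above relies on.

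First, $\mathcal{H}_d$ is not a Hopf superalgebra, so ``via the antipode'' must be replaced by a super anti-involution $\tau$. Because $c_i^2=1$ and because of the sign rule, it has to be something like $\tau(\sigma)=\sigma^{-1}$, $\tau(c_i)=\sqrt{-1}\,c_i$.

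Second, for the resulting right module $D^\lambda_\tau$, semisimplicity only tells you that $D^\lambda_\tau\otimes_{\mathcal{H}_d}D^\mu\neq0$ if and only if $D^\mu$ is isomorphic, up to parity, to the linear dual $(D^\lambda_\tau)^*$. So your argument by itself pairs $L_m(\lambda)$ with $L_n(\lambda^\dagger)$ for some unknown involution $\lambda\mapsto\lambda^\dagger$ of strict partitions of $d$.

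What is missing is the analogue of self-duality of Specht modules, $(D^\lambda_\tau)^*\cong D^\lambda$. This is true: $\tau$ acts on $S_d$ by $\sigma\mapsto\sigma^{-1}$, which preserves conjugacy classes, and $\mathcal{H}_d$-supermodules are determined up to parity by their characters on $S_d$ (Sergeev--J\'ozefiak characteristic map). But it is an extra input you must supply.

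Your character check also contains a wrong formula. The correct normalization is $\mathrm{ch}\,L_r(\lambda)=2^{-(\ell(\lambda)-\delta(\lambda))/2}Q_\lambda(x_1,\ldots,x_r)$, not $2^{-(\ell(\lambda)+\delta(\lambda))/2}Q_\lambda$. Test $\lambda=(1)$: $\mathrm{ch}\,\bc^{r|r}=2(x_1+\cdots+x_r)=Q_{(1)}$.

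With your exponent, $2^{-\delta(\lambda)}L_m(\lambda)\otimes L_n(\lambda)$ would have character $2^{-\ell(\lambda)-2\delta(\lambda)}Q_\lambda(x)Q_\lambda(y)$. The Cauchy identity would then force multiplicity $4$ whenever $\ell(\lambda)$ is odd, so the check fails as written. With the correct exponent the coefficient is $2^{-\delta}\cdot 2^{-(\ell-\delta)}=2^{-\ell}$, which matches.

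Stage~1, the corrected character formula and the Cauchy identity together already give a complete proof, up to parity shifts. You also need that the $Q_\lambda(x_1,\ldots,x_r)$ with $\ell(\lambda)\le r$ are linearly independent and that $Q_\lambda(x_1,\ldots,x_r)=0$ for $\ell(\lambda)>r$. This route bypasses the self-duality issue entirely.
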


We now choose explicit coordinates on \(S(\bc^{mn|mn})\), identifying it with a polynomial superalgebra on which the queer Lie superalgebras act by differential operators.

Let \(\{e_1,\ldots,e_m,f_1,\ldots,f_m\}\) and \(\{u_1,\ldots,u_n,v_1,\ldots,v_n\}\) be standard bases of the natural representations \(\bc^{m|m}\) of \(\fq(m)\) and \(\bc^{n|n}\) of \(\fq(n)\), respectively, where \(e_i,u_j\) are even and \(f_i,v_j\) are odd. The tensor product \(\bc^{m|m}\otimes\bc^{n|n}\) is the direct sum of two isomorphic irreducible \(\fq(m)\oplus\fq(n)\)-summands. We realize \(\bc^{mn|mn}\) as the summand having the following homogeneous basis:
\[
x_{ij}:=e_i\otimes u_j-\sqrt{-1}f_i\otimes v_j,\quad
y_{ij}:=f_i\otimes u_j-\sqrt{-1}e_i\otimes v_j
\quad (1\leq i\leq m,\ 1\leq j\leq n).
\]
Here \(x_{ij}\) is even and \(y_{ij}\) is odd, so \(S(\bc^{mn|mn})\) is identified with the polynomial superalgebra generated by the even variables \(x_{ij}\) and the odd variables \(y_{ij}\). Under this identification, the actions of \(\fq(m)\) and \(\fq(n)\) are given by
\begin{align*}
\varrho_1(E_{ij})&=\sum_{k=1}^n\biggl(x_{ik}\frac{\partial}{\partial x_{jk}}+y_{ik}\frac{\partial}{\partial y_{jk}}\biggr),&\varrho_1(F_{ij})&=\sum_{k=1}^n\biggl(y_{ik}\frac{\partial}{\partial x_{jk}}+x_{ik}\frac{\partial}{\partial y_{jk}}\biggr),\\
\varrho_2(E_{ij})&=\sum_{k=1}^m\biggl(x_{ki}\frac{\partial}{\partial x_{kj}}+y_{ki}\frac{\partial}{\partial y_{kj}}\biggr),&\varrho_2(F_{ij})&=\sqrt{-1}\sum_{k=1}^m\biggl(y_{ki}\frac{\partial}{\partial x_{kj}}-x_{ki}\frac{\partial}{\partial y_{kj}}\biggr).
\end{align*}

Next, we consider the directed system of polynomial superalgebras \(S(\bc^{mn|mn})\), indexed by \(m,n\in\bn\), under the natural inclusions. Its direct limit is the polynomial superalgebra
\[
\cpinf:=\bc[x_{ij},y_{ij}\mid i,j\in\bn].
\]
Let \(\fq(\infty):=\varinjlim_r\fq(r)\) be the infinite queer Lie superalgebra obtained as the direct limit of the queer Lie superalgebras \(\fq(r)\) under the natural inclusions. The formulas
\begin{align*}
\varrho_1(E_{ij})&=\sum_{k\geq 1}\biggl(x_{ik}\frac{\partial}{\partial x_{jk}}+y_{ik}\frac{\partial}{\partial y_{jk}}\biggr),&
\varrho_1(F_{ij})&=\sum_{k\geq 1}\biggl(y_{ik}\frac{\partial}{\partial x_{jk}}+x_{ik}\frac{\partial}{\partial y_{jk}}\biggr),\\
\varrho_2(E_{ij})&=\sum_{k\geq 1}\biggl(x_{ki}\frac{\partial}{\partial x_{kj}}+y_{ki}\frac{\partial}{\partial y_{kj}}\biggr),&
\varrho_2(F_{ij})&=\sqrt{-1}\sum_{k\geq 1}\biggl(y_{ki}\frac{\partial}{\partial x_{kj}}-x_{ki}\frac{\partial}{\partial y_{kj}}\biggr),
\end{align*}
define two actions of \(\fq(\infty)\) on \(\cpinf\), since every polynomial involves only finitely many variables.

Under the canonical map to this direct limit, each \(S(\bc^{mn|mn})\) is identified with the subalgebra of \(\cpinf\) generated by
\[
x_{ij},y_{ij}\quad (1\leq i\leq m,\ 1\leq j\leq n).
\]
With this identification, the restrictions of the two \(\fq(\infty)\)-actions along the inclusions \(\fq(m),\fq(n)\subset\fq(\infty)\) coincide with the actions \(\varrho_1\) and \(\varrho_2\) in the Howe duality above, respectively.

\begin{definition}\label{def:Hlambda}
Let \(\lambda=(\lambda_1,\ldots,\lambda_\ell)\) be a strict partition of length \(\ell\). For \(m,n\geq \ell\), let \(H_\lambda^{m,n}\) denote the space of highest weight vectors in the \(2^{-\delta(\lambda)}L_m(\lambda)\otimes L_n(\lambda)\)-summand of \(S(\bc^{mn|mn})\) occurring in Theorem~\ref{thm:howe-duality}. Set
\[
H_\lambda:=H_\lambda^{\ell,\ell}\subset S\bigl(\bc^{\ell^2|\ell^2}\bigr)\subset\cpinf.
\]
\end{definition}

\begin{remark}\label{rmk:reduction-square}
Let \(m,n\geq \ell\). Under the embeddings above, the inclusion
\[
S\bigl(\bc^{\ell^2|\ell^2}\bigr)\subset S(\bc^{mn|mn})\subset\cpinf
\]
maps \(H_\lambda\) into \(H_\lambda^{m,n}\). Indeed, the relevant actions are obtained by restricting the two \(\fq(\infty)\)-actions on \(\cpinf\). The standard basis elements of \(\fn^+\) already contained in the two copies of \(\fq(\ell)\) act as in the \((\fq(\ell),\fq(\ell))\)-case. The other standard basis elements of \(\fn^+\) in \(\fq(m)\) or \(\fq(n)\) differentiate with respect to variables whose row or column index is greater than \(\ell\), and therefore annihilate \(S\bigl(\bc^{\ell^2|\ell^2}\bigr)\). Thus, the image of \(H_\lambda\) consists of highest weight vectors of weight \((\lambda,\lambda)\) in \(S(\bc^{mn|mn})\).

Since \(\dim H_\lambda=\dim H_\lambda^{m,n}=2^\ell\) (cf. \cite[Section~1.5.4]{MR3012224}), this inclusion identifies \(H_\lambda\) with \(H_\lambda^{m,n}\). We shall therefore regard all spaces \(H_\lambda^{m,n}\) as the same subspace \(H_\lambda\) of \(\cpinf\). In particular, for the purpose of characterizing these highest weight vectors, it suffices to consider the \((\fq(r),\fq(r))\)-case.
\end{remark}

Since \(H_\lambda\) is the highest weight space of the \(\lambda\)-summand, it is stable under the restrictions of both Cartan actions \(\varrho_1|_{\fh}\) and \(\varrho_2|_{\fh}\). The following proposition uses these actions to endow \(H_\lambda\) with the structure of a spinor module for a natural Clifford algebra action.

\begin{proposition}\label{prop:cli}
Let \(\lambda=(\lambda_1,\ldots,\lambda_\ell)\) be a strict partition. For the generators \(c_i^\pm\) of \(\Cl_{2\ell}\), define endomorphisms \(\Theta(c_i^\pm)\) of \(H_\lambda\) by
\begin{align*}
\Theta(c_i^+)v
&=\frac{1}{2}(\varrho_1(F_{ii})-\sqrt{-1}\,\varrho_2(F_{ii}))v,\\
\Theta(c_i^-)v
&=\frac{1}{2\lambda_i}
(\varrho_1(F_{ii})+\sqrt{-1}\varrho_2(F_{ii}))v
\quad (v\in H_\lambda).
\end{align*}
Then the assignment \(c_i^\pm\mapsto \Theta(c_i^\pm)\) extends uniquely to an action of \(\Cl_{2\ell}\) on \(H_\lambda\). With this action, \(H_\lambda\) is an irreducible \(\Cl_{2\ell}\)-module, that is, a spinor module.
\end{proposition}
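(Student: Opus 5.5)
Our plan is to verify the Clifford relations directly from the commutation relations in \(\fq(\infty)\), and then to get irreducibility by counting dimensions. Set \(P_i:=\varrho_1(F_{ii})\) and \(Q_i:=\varrho_2(F_{ii})\).

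\textbf{Relations among \(P_i\) and \(Q_i\).} The two actions \(\varrho_1\) and \(\varrho_2\) supercommute, so \(P_iQ_j+Q_jP_i=0\) for all \(i,j\). Inside \(\fq(n)\) we have \([F_{ii},F_{jj}]=2\delta_{ij}E_{ii}\); this is checked directly from the matrices. Since \(\varrho_1\) and \(\varrho_2\) are representations, it follows that
\[
P_iP_j+P_jP_i=2\delta_{ij}\varrho_1(E_{ii}),\qquad Q_iQ_j+Q_jQ_i=2\delta_{ij}\varrho_2(E_{ii}).
\]
If the normalisation \(\sqrt{-1}\) in \(\varrho_2(F_{ij})\) changes the sign in the second relation, we will correct it by a direct check on the differential operators. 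Note that \(P_i,Q_i\) preserve \(H_\lambda\). Indeed, \(H_\lambda\) is the \((\lambda,\lambda)\)-weight space annihilated by \(\fn^+\oplus\fn^+\), and \(F_{ii}\) normalises \(\fn^+\) while commuting with \(\fh_{\bar 0}\) up to weight zero. On \(H_\lambda\) both \(\varrho_1(E_{ii})\) and \(\varrho_2(E_{ii})\) act by \(\lambda_i\).

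\textbf{Clifford relations.} With \(c_i^\pm\) defined as in the statement, we expand. First,
\[
4(c_i^+c_j^++c_j^+c_i^+)=[P_i,P_j]-[Q_i,Q_j]-\sqrt{-1}\bigl([P_i,Q_j]+[Q_i,P_j]\bigr)=2\delta_{ij}(\lambda_i-\lambda_i)=0.
\]
The same computation gives zero for \(c^-\). For the mixed relation,
\[
4\lambda_j(c_i^+c_j^-+c_j^-c_i^+)=[P_i,P_j]+[Q_i,Q_j]=4\delta_{ij}\lambda_i,
\]
so \(c_i^+c_j^-+c_j^-c_i^+=\delta_{ij}\). Here we need \(\lambda_i\neq 0\) for \(i\le\ell\), which holds because \(\lambda\) is strict of length \(\ell\). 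These are exactly the defining relations of \(\Cl_{2\ell}\), so the assignment extends uniquely to an algebra map \(\Cl_{2\ell}\to\End(H_\lambda)\). The only sign-sensitive step, and the one I expect to be the main obstacle, is confirming \(Q_i^2=\lambda_i\) rather than \(-\lambda_i\) on \(H_\lambda\). I would check it directly: from the explicit formula, \(Q_i^2=-(\sqrt{-1})^2(\ldots)\) reduces to \(\varrho_2(E_{ii})\), consistent with \(\varrho_2\) being a representation of \(\fq\).

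\textbf{Irreducibility.} \(\Cl_{2\ell}\) is a simple superalgebra whose spinor module has dimension \(2^\ell\). Hence every nonzero finite-dimensional module is a direct sum of copies of \(\cs\), possibly with parity shifts. By Remark~\ref{rmk:reduction-square}, \(\dim H_\lambda=2^\ell\), and \(H_\lambda\neq0\) because the \(\lambda\)-summand occurs in Theorem~\ref{thm:howe-duality}. Therefore \(H_\lambda\) is a single copy of \(\cs\), that is, an irreducible \(\Cl_{2\ell}\)-module.
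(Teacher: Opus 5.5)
Your proposal is correct and follows essentially the same route as the paper: you derive $[P_i,Q_j]=0$ and $[P_i,P_j]=[Q_i,Q_j]=2\delta_{ij}\lambda_i$ on $H_\lambda$, expand the three Clifford relations, and obtain irreducibility from $\dim H_\lambda=2^\ell$ together with the structure of $\Cl_{2\ell}$-modules. Your sign worry resolves as you expect, since $\varrho_2(F_{ii})^2=\varrho_2(E_{ii})$ because $\varrho_2$ is a representation of $\fq(n)$. Your extra remarks on the stability of $H_\lambda$ and on $\lambda_i\neq 0$ make explicit points that the paper leaves implicit.
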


\begin{proof}
It remains only to check that the endomorphisms \(\Theta(c_i^\pm)\) satisfy the Clifford relations; the irreducibility then follows from \(\dim H_\lambda=2^\ell\) and the classification of irreducible \(\Cl_{2\ell}\)-modules. Set
\[
A_i:=\varrho_1(F_{ii}),\quad B_i:=\varrho_2(F_{ii})
\quad (1\leq i\leq \ell).
\]
Then
\[
[A_i,B_j]=0,\quad [A_i,A_j]=[B_i,B_j]=2\delta_{ij}\lambda_i\id
\quad (1\leq i,j\leq\ell),
\]
and
\[
\Theta(c_i^+)=\frac{1}{2}(A_i-\sqrt{-1}B_i),\quad
\Theta(c_i^-)=\frac{1}{2\lambda_i}(A_i+\sqrt{-1}B_i).
\]
Hence,
\begin{align*}
[\Theta(c_i^+),\Theta(c_j^+)]
&=\frac{1}{4}[A_i-\sqrt{-1}B_i,A_j-\sqrt{-1}B_j]\\
&=\frac{1}{4}([A_i,A_j]-[B_i,B_j])=0,\\
[\Theta(c_i^-),\Theta(c_j^-)]
&=\frac{1}{4\lambda_i\lambda_j}
[A_i+\sqrt{-1}B_i,A_j+\sqrt{-1}B_j]\\
&=\frac{1}{4\lambda_i\lambda_j}
([A_i,A_j]-[B_i,B_j])=0,\\
[\Theta(c_i^+),\Theta(c_j^-)]
&=\frac{1}{4\lambda_j}
[A_i-\sqrt{-1}B_i,A_j+\sqrt{-1}B_j]\\
&=\frac{1}{4\lambda_j}
([A_i,A_j]+[B_i,B_j])=\delta_{ij}\id.
\end{align*}
These are precisely the defining relations of \(\Cl_{2\ell}\).
\end{proof}

\section{Regular representations on coordinate superalgebras}\label{sec:regular}

\subsection{Regular representations on \texorpdfstring{\(\co(Q(n))\)}{O(Q(n))}}

We construct the regular representations on \(\co(Q(n))\) using its Hopf superalgebra structure.

\begin{proposition}\label{prop:rho1}
Define a linear map \(\rho_1:U(\fq(n))\to\End(\co(Q(n)))\) by
\[
\rho_1(x)f=(-1)^{|x|}\lra{f_{(1)}}{S^{-1}(x)}f_{(2)},\quad x\in U(\fq(n)),f\in\co(Q(n)).
\]
Then \(\rho_1\) defines a \(\fq(n)\)-module structure on \(\co(Q(n))\).
\end{proposition}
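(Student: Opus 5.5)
We need to show that \(\rho_1\) is well defined, i.e.\ \(\rho_1(x)f\in\co(Q(n))\), and that it is an algebra homomorphism \(U(\fq(n))\to\End(\co(Q(n)))\): \(\rho_1(1)=\id\) and \(\rho_1(xy)=\rho_1(x)\rho_1(y)\). Here \(\End\) carries its natural superalgebra structure, and the sign \((-1)^{|x|}\) will have to be tracked. Restricting to \(\fq(n)\subset U(\fq(n))\) then gives a \(\fq(n)\)-module structure, since the supercommutator in \(U(\fq(n))\) is the bracket. Well-definedness is immediate: \(\Delta(f)\) is a finite sum in \(\co(Q(n))\otimes\co(Q(n))\) and \(\lra{-}{-}\) is a scalar pairing. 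For the unit, \(S^{-1}(1)=1\) and \(\lra{f_{(1)}}{1}=\epsilon(f_{(1)})\), so \(\rho_1(1)f=\epsilon(f_{(1)})f_{(2)}=f\).

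For multiplicativity, first note that \(S\) is invertible: \(\co(Q(n))\) is supercommutative, so \(S^2=\id\) there, and the nondegenerate pairing of Proposition~\ref{prop:nond} transports this to \(U(\fq(n))\); in any case \(U(\fq(n))\) is supercocommutative, so \(S^2=\id\) directly. The key identities are that \(S^{-1}\) is a super anti-automorphism, \(S^{-1}(xy)=(-1)^{|x||y|}S^{-1}(y)S^{-1}(x)\), and the Hopf pairing axiom \(\lra{f}{ab}=(-1)^{|f_{(2)}||a|}\lra{f_{(1)}}{a}\lra{f_{(2)}}{b}\). The pairing here is written with \(\co(Q(n))\) on the left, so I will restate the axioms in that order and check the sign convention against the one in the preliminaries. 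Then
\[
\rho_1(xy)f=(-1)^{|x|+|y|}(-1)^{|x||y|}\lra{f_{(1)}}{S^{-1}(y)S^{-1}(x)}f_{(2)},
\]
and expanding with coassociativity gives
\[
\pm\lra{f_{(1)}}{S^{-1}(y)}\lra{f_{(2)}}{S^{-1}(x)}f_{(3)}.
\]
On the other side,
\[
\rho_1(x)\rho_1(y)f=(-1)^{|x|+|y|}\lra{f_{(1)}}{S^{-1}(y)}\lra{f_{(2)}}{S^{-1}(x)}f_{(3)},
\]
using \(\Delta(f_{(2)})=f_{(2)}\otimes f_{(3)}\) and linearity of the pairing. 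No sign arises from moving the scalar \(\lra{f_{(1)}}{S^{-1}(y)}\) past \(\rho_1(x)\), since the scalar is nonzero only when the parities match.

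It remains to match signs. The pairing is even, so \(\lra{f_{(1)}}{S^{-1}(y)}\neq0\) forces \(|f_{(1)}|=|y|\), and similarly \(|f_{(2)}|=|x|\). The sign from the pairing axiom is then \((-1)^{|f_{(2)}||S^{-1}(y)|}=(-1)^{|x||y|}\), which cancels the factor \((-1)^{|x||y|}\) coming from the anti-automorphism. This leaves exactly \((-1)^{|x|+|y|}\) on both sides. The main obstacle is this bookkeeping: the precise sign convention in the pairing axiom, in particular whether the \(\co\)-on-the-left version introduces a transposition sign, and whether the prefactor \((-1)^{|x|}\) is what is needed, rather than its role being to make \(\rho_1\) correspond to a left (right-translation-inverse) action. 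If the signs fail to cancel, I would fix them by instead checking the Lie bracket relation \(\rho_1([x,y])=[\rho_1(x),\rho_1(y)]\) on generators \(x,y\in\fq(n)\), where \(S^{-1}(x)=-x\) and \(\Delta(x)=x\otimes1+1\otimes x\). In that setting \(\rho_1(x)\) acts as the super derivation \(f\mapsto-(-1)^{|x|}\lra{f_{(1)}}{x}f_{(2)}\), and the bracket identity reduces to the same two-term expansion of the pairing.
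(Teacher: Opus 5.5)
Your proposal is correct and follows the paper's proof: you check $\rho_1(1)=\id$ and $\rho_1(xy)=\rho_1(x)\rho_1(y)$ using coassociativity, the Hopf pairing axiom, and the super anti-automorphism property of $S^{-1}$. Your sign check, where the factor $(-1)^{|x||y|}$ from $S^{-1}(xy)$ cancels the pairing sign $(-1)^{|f_{(2)}||y|}$, is exactly what the paper compresses into its third equality, so the Lie-bracket fallback is not needed.
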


\begin{proof}
It suffices to prove that
\[
\rho_1(xy)=\rho_1(x)\rho_1(y),
\]
for \(x,y\in U(\fq(n))\), and that \(\rho_1(1)=\id_{\co(Q(n))}\). Let \(f\in\co(Q(n))\). Then
\begin{align*}
\rho_1(x)\rho_1(y)f=&(-1)^{|y|}\rho_1(x)(\lra{f_{(1)}}{S^{-1}(y)}f_{(2)})\\
=&(-1)^{|y|+|x|}\lra{f_{(1)}}{S^{-1}(y)}\lra{f_{(2)}}{S^{-1}(x)}f_{(3)}\\
=&(-1)^{|xy|}\lra{f_{(1)}}{S^{-1}(xy)}f_{(2)}\\
=&\rho_1(xy)f.
\end{align*}
Moreover,
\[
\rho_1(1)f=\lra{f_{(1)}}{S^{-1}(1)}f_{(2)}=\epsilon(f_{(1)})f_{(2)}=f.
\]
Therefore, \(\rho_1\) defines a \(\fq(n)\)-module structure on \(\co(Q(n))\).
\end{proof}

\begin{proposition}\label{prop:rho2}
Define a linear map \(\rho_2:U(\fq(n))\to\End(\co(Q(n)))\) by
\[
\rho_2(x)f=(-1)^{|x||f|} f_{(1)}\lra{f_{(2)}}{x},\quad x\in U(\fq(n)),f\in\co(Q(n)).
\]
Then \(\rho_2\) defines a \(\fq(n)\)-module structure on \(\co(Q(n))\).
\end{proposition}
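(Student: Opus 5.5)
The plan mirrors the proof of Proposition~\ref{prop:rho1}: check that \(\rho_2(1)=\id\) and that \(\rho_2\) is multiplicative, \(\rho_2(xy)=\rho_2(x)\rho_2(y)\) for homogeneous \(x,y\in U(\fq(n))\). Since \(U(\fq(n))\) is the universal enveloping superalgebra, an even superalgebra map \(U(\fq(n))\to\End(\co(Q(n)))\) is the same as a \(\fq(n)\)-module structure, so this suffices. The unit axiom is immediate from \(\lra{f_{(2)}}{1}=\epsilon(f_{(2)})\) and the counit property \(f_{(1)}\epsilon(f_{(2)})=f\).

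For multiplicativity, fix homogeneous \(f\) and expand the composite:
\[
\rho_2(x)\rho_2(y)f=(-1)^{|y||f|}\rho_2(x)\bigl(f_{(1)}\bigr)\lra{f_{(2)}}{y}
=(-1)^{|y||f|+|x||f_{(1)}|}f_{(1)}\lra{f_{(2)}}{x}\lra{f_{(3)}}{y},
\]
using coassociativity to write \(\Delta(f_{(1)})\otimes f_{(2)}\) as \(f_{(1)}\otimes f_{(2)}\otimes f_{(3)}\). On the other side,
\[
\rho_2(xy)f=(-1)^{(|x|+|y|)|f|}f_{(1)}\lra{f_{(2)}}{xy}.
\]
By the first axiom of a Hopf pairing (the one pairing a product \(xy\) with \(\Delta\) of the other argument), this equals \((-1)^{(|x|+|y|)|f|+|y||f_{(2)}|}f_{(1)}\lra{f_{(2)}}{x}\lra{f_{(3)}}{y}\). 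The pairing is not literally written as \(\lra{f}{xy}\) there, but the evaluation pairing \(\co(Q(n))\otimes U(\fq(n))\to\bc\) with the factors swapped satisfies the mirrored identity, with the sign convention adapted.

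The main point, essentially the only obstacle, is matching the signs. The key observation is that the pairing is even, so each factor \(\lra{f_{(2)}}{x}\) vanishes unless \(|f_{(2)}|=|x|\), and likewise \(|f_{(3)}|=|y|\). Hence \(|f_{(1)}|=|f|+|x|+|y|\) on every surviving term. The sign from the composite then becomes
\[
(-1)^{|y||f|+|x|(|f|+|x|+|y|)},
\]
and the sign from \(\rho_2(xy)\) becomes
\[
(-1)^{(|x|+|y|)|f|+|y||x|}.
\]
These differ by \((-1)^{|x||x|}=(-1)^{|x|}\), so a precise sign convention for the Hopf pairing identity must be pinned down. I would redo this bookkeeping carefully, tracking which Koszul sign arises when moving \(x\) past \(f_{(1)}\) in the pairing identity for the swapped order. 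I expect the correct version of the identity to absorb exactly this discrepancy, yielding equality of the two expressions.
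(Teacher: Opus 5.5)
Your strategy matches the paper's: check the unit axiom, then multiplicativity via coassociativity and the pairing axiom. However, your proposal stops at an unresolved sign mismatch $(-1)^{|x|}$. Since the sign check is the whole content of this proposition, the argument as written does not prove it. Moreover, the mismatch comes from an error in your expansion of $\rho_2(x)\rho_2(y)f$. So the remedy you anticipate, a ``correct version'' of the pairing identity absorbing it, looks in the wrong place: the pairing and its sign rule are fixed by the paper's definitions.

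The Koszul sign produced by $\rho_2(x)$ is $(-1)^{|x||g|}$, where $g$ is the element $\rho_2(x)$ is applied to. That element is the first factor of $\Delta(f)=f_{(1)}\otimes f_{(2)}$ \emph{before} it is split again. On surviving terms $|f_{(2)}|=|y|$, so $|g|=|f|+|y|$. After relabelling by coassociativity, this is $|f_{(1)}|+|f_{(2)}|$ in triple Sweedler notation. You instead used the parity $|f_{(1)}|=|f|+|x|+|y|$ of the new first factor. Hence
\[
\rho_2(x)\rho_2(y)f=(-1)^{|y||f|+|x|(|f|+|y|)}f_{(1)}\lra{f_{(2)}}{x}\lra{f_{(3)}}{y}.
\]

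For the other side, no swapping of arguments is needed. The second axiom in the paper's definition, with $H=\co(Q(n))$ as the left argument, reads
\[
\lra{f}{xy}=(-1)^{|f_{(2)}||x|}\lra{f_{(1)}}{x}\lra{f_{(2)}}{y},
\]
and on surviving terms $(-1)^{|f_{(2)}||x|}=(-1)^{|x||y|}$. So $\rho_2(xy)f$ carries the sign $(-1)^{(|x|+|y|)|f|+|x||y|}$, which is the value you also obtained. This now coincides with the corrected sign of the composite, and this is exactly the paper's computation.

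If your $(-1)^{|x|}$ discrepancy were genuine, it would give $\rho_2(x)\rho_2(y)=-\rho_2(xy)$ for odd $x$, so the statement would be false. It therefore cannot be ``absorbed'' without changing the definitions; the fix has to come from the computation itself.
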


\begin{proof}
It suffices to prove that
\[
\rho_2(xy)=\rho_2(x)\rho_2(y),
\]
for \(x,y\in U(\fq(n))\), and that \(\rho_2(1)=\id_{\co(Q(n))}\). Let \(f\in\co(Q(n))\). Then
\begin{align*}
\rho_2(x)\rho_2(y)f=&(-1)^{|y||f|}\rho_2(x)(f_{(1)}\lra{f_{(2)}}{y})\\
=&(-1)^{|y||f|+|x|(|y|+|f|)}f_{(1)}\lra{f_{(2)}}{x}\lra{f_{(3)}}{y}\\
=&(-1)^{|xy||f|}f_{(1)}\lra{f_{(2)}}{xy}\\
=&\rho_2(xy)f.
\end{align*}
Moreover,
\[
\rho_2(1)f=f_{(1)}\lra{f_{(2)}}{1}=f_{(1)}\epsilon(f_{(2)})=f.
\]
Therefore, \(\rho_2\) defines a \(\fq(n)\)-module structure on \(\co(Q(n))\).
\end{proof}

The representations defined in Propositions~\ref{prop:rho1} and~\ref{prop:rho2} are called the \emph{regular representations} on \(\co(Q(n))\). We verify that they define a \(\fq(n)\oplus\fq(n)\)-module structure on \(\co(Q(n))\).

\begin{proposition}\label{prop:rho12}
The linear map
\[
\rho_1\oplus\rho_2:\fq(n)\oplus\fq(n)\to\End(\co(Q(n)))
\]
defines a \(\fq(n)\oplus\fq(n)\)-module structure on \(\co(Q(n))\).
\end{proposition}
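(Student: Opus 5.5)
Since Propositions~\ref{prop:rho1} and~\ref{prop:rho2} already show that \(\rho_1\) and \(\rho_2\) are each \(\fq(n)\)-module structures, it remains to show that the two actions supercommute:
\[
\rho_1(x)\rho_2(y)=(-1)^{|x||y|}\rho_2(y)\rho_1(x),\qquad x,y\in U(\fq(n)).
\]
Given this, the map \(U(\fq(n))\otimes U(\fq(n))\to\End(\co(Q(n)))\), \(x\otimes y\mapsto\rho_1(x)\rho_2(y)\), is a superalgebra homomorphism from the super tensor product. Since \(U(\fq(n)\oplus\fq(n))\cong U(\fq(n))\otimes U(\fq(n))\), this is exactly a \(\fq(n)\oplus\fq(n)\)-module structure. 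Equivalently, at the Lie level, the supercommutator \([\rho_1(x),\rho_2(y)]\) vanishes for \(x,y\in\fq(n)\).

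To verify supercommutation, compute both sides on a homogeneous \(f\in\co(Q(n))\) using coassociativity and Sweedler notation. For the first composite,
\[
\rho_2(y)f=(-1)^{|y||f|}f_{(1)}\lra{f_{(2)}}{y}.
\]
Applying \(\rho_1(x)\) then uses \(\Delta(f_{(1)})\) and gives
\[
\rho_1(x)\rho_2(y)f=(-1)^{|y||f|+|x|}\lra{f_{(1)}}{S^{-1}(x)}\,f_{(2)}\,\lra{f_{(3)}}{y}.
\]
For the other composite,
\[
\rho_1(x)f=(-1)^{|x|}\lra{f_{(1)}}{S^{-1}(x)}f_{(2)}.
\]
Applying \(\rho_2(y)\) splits \(f_{(2)}\) and produces the same tensor expression
\[
\lra{f_{(1)}}{S^{-1}(x)}\,f_{(2)}\,\lra{f_{(3)}}{y},
\]
now with sign \((-1)^{|x|+|y||f_{(2)}f_{(3)}|}\).

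The only real work is the sign bookkeeping, which I expect to be the main obstacle, though it is routine. The pairing is even, so only terms with \(|f_{(1)}|=|x|\) and \(|f_{(3)}|=|y|\) contribute. Since \(\Delta\) is even, \(|f|=|f_{(1)}|+|f_{(2)}|+|f_{(3)}|\), and for nonzero terms this gives \(|f_{(2)}|+|f_{(3)}|=|f|-|x|\). Hence the sign in the second composite is
\[
(-1)^{|x|+|y|(|f|+|x|)},
\]
which differs from the first sign \((-1)^{|y||f|+|x|}\) by exactly \((-1)^{|x||y|}\). This proves the supercommutation. I would also check whether moving the scalar \(\lra{f_{(1)}}{S^{-1}(x)}\) past the other tensor factors introduces any further Koszul sign. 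It does not, since scalars are even and the parity constraints are already accounted for. Finally, I would remark that everything holds for arbitrary elements of \(U(\fq(n))\), so in particular for \(x,y\in\fq(n)\), which gives the claimed \(\fq(n)\oplus\fq(n)\)-module structure.
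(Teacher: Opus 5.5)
Your proof is correct and is essentially the paper's argument. Like the paper, you reduce to supercommutation of $\rho_1$ and $\rho_2$, expand both composites by coassociativity into $\lra{f_{(1)}}{S^{-1}(x)}f_{(2)}\lra{f_{(3)}}{y}$, and compare signs; your explicit observation that evenness of the pairing forces $|f_{(2)}|+|f_{(3)}|=|f|+|x|$ on nonzero terms is exactly the sign step the paper's computation leaves implicit.
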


\begin{proof}
It remains only to verify that the two actions supercommute. That is, for \(x,y\in\fq(n)\), we need to show that
\[
\rho_1(x)\rho_2(y)-(-1)^{|x||y|}\rho_2(y)\rho_1(x)=0.
\]
For \(f\in\co(Q(n))\), we have
\begin{align*}
&\bigl[\rho_1(x)\rho_2(y)-(-1)^{|x||y|}\rho_2(y)\rho_1(x)\bigr]f\\
=&(-1)^{|y||f|}\rho_1(x)(f_{(1)}\lra{f_{(2)}}{y})-(-1)^{|x||y|+|x|}\rho_2(y)(\lra{f_{(1)}}{S^{-1}(x)}f_{(2)})\\
=&(-1)^{|y||f|+|x|}(\lra{f_{(1)}}{S^{-1}(x)}f_{(2)}\lra{f_{(3)}}{y}-\lra{f_{(1)}}{S^{-1}(x)}f_{(2)}\lra{f_{(3)}}{y})\\
=&0.
\end{align*}
Therefore, the supercommutator vanishes, and hence \(\rho_1\oplus\rho_2\) defines a \(\fq(n)\oplus\fq(n)\)-module structure on \(\co(Q(n))\).
\end{proof}

\begin{remark}\label{rmk:dualmod}
Propositions~\ref{prop:rho1} and~\ref{prop:rho2} are consequences of the following general results. Let \(H\) and \(K\) be braided Hopf algebras with invertible antipodes; see \cite{MR1289422}. Let
\[
p: H\otimes K\to \bone
\]
be a Hopf pairing between \(H\) and \(K\), where \(\bone\) is the unit object. We denote by \(\Psi_{V,W}:V\otimes W\to W\otimes V\) the braiding isomorphism.

If \((M,\delta_1)\) is a left \(H\)-comodule with structure map
\[
\delta_1: M\to H\otimes M,
\]
then \(M\) has a left \(K\)-module structure defined by
\[
\rho_1
=(p\otimes\id_M)\circ(\Psi_{H,K}^{-1}\otimes \id_M)\circ(S_K^{-1}\otimes\delta_1):\ K\otimes M\to M.
\]
Similarly, if \((M,\delta_2)\) is a right \(H\)-comodule with structure map
\[
\delta_2: M\to M\otimes H,
\]
then \(M\) has a left \(K\)-module structure defined by
\[
\rho_2
=(\id_M\otimes p)\circ\Psi_{M\otimes H,K}^{-1}\circ(\id_K\otimes\delta_2): K\otimes M\to M.
\]

Moreover, let \(H_i\) and \(K_i\) be braided Hopf algebras equipped with Hopf pairings
\[
p_i: H_i\otimes K_i\to \bone\quad (i=1,2).
\]
If \(M\) is an \((H_1,H_2)\)-bicomodule with structure maps
\[
\delta_1:M\to H_1\otimes M,\quad \delta_2:M\to M\otimes H_2,
\]
then the induced left \(K_1\)- and \(K_2\)-module structures on \(M\) braided commute, in the sense that
\[
\rho_1\circ(\id_{K_1}\otimes\rho_2)=\rho_2\circ(\id_{K_2}\otimes\rho_1)\circ(\Psi_{K_1,K_2}\otimes\id_M): K_1\otimes K_2\otimes M\to M.
\]

In the setting considered in this section, we take
\[
H_1=H_2=\co(Q(n)),\quad K_1=K_2=U(\fq(n)),\quad p_1=p_2=\lra{-}{-},\quad M=\co(Q(n)),
\]
where \(M=\co(Q(n))\) is regarded as an \((H_1,H_2)\)-bicomodule via its coproduct. Under this condition, the two induced module structures are precisely the regular representations \(\rho_1\) and \(\rho_2\) in Propositions~\ref{prop:rho1} and~\ref{prop:rho2}, and their braided commutativity gives Proposition~\ref{prop:rho12}.
\end{remark}

\subsection{Regular representations on \texorpdfstring{\(\co(\Omega)\)}{O(Ω)}}\label{sec:regular-big-cell}

We now extend the regular representations on \(\co(Q(n))\) to the coordinate superalgebra \(\co(\Omega)\) of the big cell. We first record some basic properties of superderivations.

A linear map \(D:R\to R\) is called a \emph{superderivation} if it satisfies the super Leibniz rule
\[
D(ab)=D(a)b+(-1)^{|D||a|}aD(b)
\]
for \(a,b\in R\), where \(|D|\in\bz_2\) denotes the parity of \(D\). The vector superspace of all superderivations on \(R\) is denoted by \(\Der(R)\). It is a subalgebra of Lie superalgebra \(\End(R)\), whose bracket is given by the supercommutator.

\begin{lemma}\label{lem:der}
Let \(R\) be a commutative superalgebra, and let \(S\subset R_{\bar 0}\) be a multiplicative subset. Then every superderivation \(D:R\to R\) extends uniquely to a superderivation \(D':S^{-1}R\to S^{-1}R\).
\end{lemma}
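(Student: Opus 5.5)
Both existence and uniqueness follow from the quotient rule. Since \(S\subset R_{\bar 0}\) consists of even elements, \(S^{-1}R\) is again a commutative superalgebra. Its elements are fractions \(a/s\) with \(a\in R\) and \(s\in S\), and \(a/s=b/t\) if and only if \(u(at-bs)=0\) for some \(u\in S\). Let \(D\) be homogeneous of parity \(|D|\), and extend to arbitrary \(D\) by linearity.

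\emph{Uniqueness.} Suppose \(D'\) is a superderivation of \(S^{-1}R\) extending \(D\). Apply \(D'\) to \(a=s\cdot(a/s)\) and use the Leibniz rule. Because \(s\) is even, no sign appears:
\[
D(a)=D(s)\,\tfrac{a}{s}+s\,D'\bigl(\tfrac{a}{s}\bigr).
\]
Since \(s\) is invertible in \(S^{-1}R\), this forces
\[
D'(a/s)=\frac{sD(a)-D(s)a}{s^2}.
\]
Hence \(D'\) is uniquely determined.

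\emph{Existence.} Take the displayed formula as the definition of \(D'\). The main point is well-definedness. Suppose \(u(at-bs)=0\) with \(u\in S\). Applying \(D\) to this identity gives
\[
D(u)(at-bs)+u\bigl(D(a)t+aD(t)-D(b)s-bD(s)\bigr)=0.
\]
Multiplying by \(u\) kills the first term, so \(u^2\bigl(D(a)t+aD(t)-D(b)s-bD(s)\bigr)=0\).

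Now check that
\[
\frac{sD(a)-D(s)a}{s^2}=\frac{tD(b)-D(t)b}{t^2},
\]
which amounts to showing that \(w\bigl(t^2(sD(a)-D(s)a)-s^2(tD(b)-D(t)b)\bigr)=0\) for some \(w\in S\). Take \(w=u^2\). Using \(uat=ubs\) together with the derived relation above, expand the difference. It becomes an \(R\)-combination of \(u(at-bs)\) and \(u^2\bigl(D(a)t+aD(t)-D(b)s-bD(s)\bigr)\), so it vanishes. This is a routine verification, and it is the only place where care is needed, since zero divisors are allowed.

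\emph{\(D'\) is a superderivation.} Clearly \(D'(a/1)=D(a)\), so \(D'\) extends \(D\), and \(D'\) is additive. For the Leibniz rule, compute
\[
D'\Bigl(\frac{a}{s}\cdot\frac{b}{t}\Bigr)=D'\Bigl(\frac{ab}{st}\Bigr)
\]
by the defining formula. Expand \(D(ab)=D(a)b+(-1)^{|D||a|}aD(b)\) and \(D(st)=D(s)t+sD(t)\); the latter has no sign because \(s\) is even. The result equals
\[
D'\bigl(\tfrac{a}{s}\bigr)\tfrac{b}{t}+(-1)^{|D||a|}\tfrac{a}{s}D'\bigl(\tfrac{b}{t}\bigr).
\]
This uses only that the even elements \(s\) and \(t\) are central, so moving them past \(a\) and \(D(b)\) introduces no signs.
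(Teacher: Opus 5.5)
Your proof is correct and follows the paper's route: the paper cites the classical localization argument (Stacks Project, Tag 07PD), applied with the Koszul sign rule and the same quotient formula $D'(r/s)=(sD(r)-D(s)r)/s^2$, and you have written out the uniqueness, well-definedness and Leibniz checks that the paper leaves implicit. One sign slip needs fixing. Since $t$ is even, $D(at)=D(a)t+(-1)^{|D||a|}aD(t)=D(a)t+D(t)a$, so your derived relation should read $u^2\bigl(D(a)t+D(t)a-D(b)s-D(s)b\bigr)=0$; as written, with $aD(t)$ and $bD(s)$, it is false when $a$ and $D$ are both odd, and the cancellation you sketch would then not go through. Similarly, the Leibniz check uses $D(t)a=(-1)^{|D||a|}aD(t)$, that is, graded commutativity against the possibly odd element $D(t)$, and not merely the centrality of $s$ and $t$.
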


\begin{proof}
For ordinary derivations, this is the classical localization property; see
\cite[\href{https://stacks.math.columbia.edu/tag/07PD}{Tag~07PD}]{stacks-project}.
Since the elements of \(S\) are even, the same argument applies in the commutative superalgebra setting, with the usual Koszul sign rule. Explicitly, for \(D\), the extension is defined by
\[
D'\biggl(\frac r s\biggr)=\frac{sD(r)-D(s)r}{s^2},
\quad r\in R,\ s\in S.\qedhere
\]
\end{proof}

\begin{corollary}\label{cor:der-localization}
Let \(R\) be a commutative superalgebra, and let \(S\subset R_{\bar 0}\) be a multiplicative subset. Then Lemma~\ref{lem:der} induces Lie superalgebra homomorphisms
\[
\lambda_S:\Der(R)\to \Der(S^{-1}R),\quad D\mapsto D',
\]
where \(D'\) denotes the unique superderivation extending \(D\) to \(S^{-1}R\).
\end{corollary}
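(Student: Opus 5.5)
We need to show that the map \(D\mapsto D'=\lambda_S(D)\) of Corollary~\ref{cor:der-localization} is well defined, linear, parity-preserving, and compatible with the supercommutator. The plan is to deduce everything from the uniqueness part of Lemma~\ref{lem:der}. Well-definedness is immediate: \(D'\) is the unique superderivation of \(S^{-1}R\) extending \(D\).

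For linearity, take superderivations \(D_1,D_2\) of \(R\) and a scalar \(c\in\bc\). The map \(D_1'+cD_2'\) is a superderivation of \(S^{-1}R\) that restricts to \(D_1+cD_2\) on the image of \(R\). By uniqueness it equals \((D_1+cD_2)'\). For homogeneous \(D\), the explicit formula
\[
D'\biggl(\frac rs\biggr)=\frac{sD(r)-D(s)r}{s^2}
\]
shows that \(D'\) has the same parity as \(D\), since \(s\in R_{\bar 0}\). Hence \(\lambda_S\) is an even linear map.

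For the bracket, let \(D_1,D_2\) be homogeneous. The supercommutator \([D_1',D_2']=D_1'D_2'-(-1)^{|D_1||D_2|}D_2'D_1'\) is a superderivation of \(S^{-1}R\), because \(\Der(S^{-1}R)\) is a Lie subsuperalgebra of \(\End(S^{-1}R)\). Both \(D_1'\) and \(D_2'\) map the image of \(R\) into itself, acting there as \(D_1\) and \(D_2\). So \([D_1',D_2']\) restricts to \([D_1,D_2]\) on the image of \(R\). Uniqueness then gives \([D_1,D_2]'=[D_1',D_2']\), and bilinearity extends this to all elements.

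The only potential subtlety is that the localization map \(R\to S^{-1}R\) need not be injective, for instance if \(S\) contains zero divisors. Then "extends" should be read as compatibility with the localization map. The uniqueness argument still works, because a superderivation of \(S^{-1}R\) is determined by its values on the images \(r/1\): from \(s\cdot(r/s)=r/1\) and the Leibniz rule with \(s\) even, one solves \(D'(r/s)=s^{-1}\bigl(D'(r/1)-D'(s/1)\,r/s\bigr)\). With this reading, the argument above goes through verbatim.
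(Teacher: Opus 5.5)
Your proposal is correct and is the paper's argument: both $[\lambda_S(D_1),\lambda_S(D_2)]$ and $\lambda_S([D_1,D_2])$ are superderivations of $S^{-1}R$ extending $[D_1,D_2]$, so they coincide by the uniqueness in Lemma~\ref{lem:der}. Your extra checks are sound details that the paper leaves implicit: linearity, parity preservation via the explicit formula, and the reading of ``extends'' when $R\to S^{-1}R$ is not injective.
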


\begin{proof}
For superderivations \(D_1,D_2\) on \(R\), both
\[
[\lambda_S(D_1), \lambda_S(D_2)]\quad\text{and}\quad\lambda_S([D_1,D_2])
\]
are superderivations of \(S^{-1}R\) extending \([D_1,D_2]\). Hence, they are the same by uniqueness.
\end{proof}

Then we verify that the regular actions on \(\co(Q(n))\) satisfy the super Leibniz rule.

\begin{proposition}\label{prop:der}
The actions \(\rho_1\) and \(\rho_2\) of \(\fq(n)\) on \(\co(Q(n))\) act by superderivations. That is,
\[
\rho_1(\fq(n)),\,\rho_2(\fq(n))\subseteq\Der(\co(Q(n))).
\]
\end{proposition}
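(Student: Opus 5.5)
The plan is to verify the super Leibniz rule directly from the Hopf pairing axioms. The key input is that elements of \(\fq(n)\subset U(\fq(n))\) are primitive: \(\Delta(x)=x\otimes 1+1\otimes x\), \(\epsilon(x)=0\) and \(S(x)=-x\), so \(S^{-1}(x)=-x\). The other ingredient is that \(\Delta\) on \(\co(Q(n))\) is a superalgebra homomorphism, so that
\[
(fg)_{(1)}\otimes(fg)_{(2)}=(-1)^{|f_{(2)}||g_{(1)}|}f_{(1)}g_{(1)}\otimes f_{(2)}g_{(2)}.
\]

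For \(\rho_2\), fix homogeneous \(x\in\fq(n)\) and \(f,g\in\co(Q(n))\). Expand
\[
\rho_2(x)(fg)=(-1)^{|x|(|f|+|g|)}(-1)^{|f_{(2)}||g_{(1)}|}f_{(1)}g_{(1)}\lra{f_{(2)}g_{(2)}}{x}.
\]
Then apply the pairing axiom \(\lra{ab}{x}=(-1)^{|b||x_{(1)}|}\lra{a}{x_{(1)}}\lra{b}{x_{(2)}}\) with \(\Delta(x)=x\otimes1+1\otimes x\). This gives
\[
\lra{f_{(2)}g_{(2)}}{x}=\lra{f_{(2)}}{x}\epsilon(g_{(2)})+(-1)^{|x||g_{(2)}|}\epsilon(f_{(2)})\lra{g_{(2)}}{x}.
\]
Contracting with the counit, the first term collapses to \(\rho_2(x)(f)\,g\). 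The second collapses to \(f\,\rho_2(x)(g)\), up to a sign that must be shown to equal \((-1)^{|x||f|}\). The sign bookkeeping uses only two facts: the pairing is even, so \(\lra{a}{x}\neq0\) forces \(|a|=|x|\); and counit terms force the relevant parities to vanish.

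For \(\rho_1\), the argument is the same, now using \(\rho_1(x)f=-(-1)^{|x|}\lra{f_{(1)}}{x}f_{(2)}\) and the multiplicativity of the pairing in the first slot.

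The main obstacle is purely the Koszul sign tracking in these two expansions. Everything else is formal. To keep it manageable, I would reduce to homogeneous Sweedler components and use the parity constraints from the evenness of \(\lra{-}{-}\) and of \(\epsilon\) to kill cross terms before comparing signs. As an independent sanity check, one can compute \(\rho_1,\rho_2\) on the generators \(x_{ij},y_{ij}\) from the explicit comultiplication. They should come out as first-order differential operators of the shape \(\varrho_1,\varrho_2\) up to constants, which are manifestly superderivations.

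Since a superderivation is determined by its values on generators and extends uniquely to the localization \(\co(Q(n))=\bc[x_{ij},y_{ij}]_{d_n}\) by Lemma~\ref{lem:der}, this check confirms the claim. It also prepares the extension to \(\co(\Omega)\) via Corollary~\ref{cor:der-localization}.
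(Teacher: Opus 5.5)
Your route is the paper's route: use that $\Delta$ is multiplicative and that $x\in\fq(n)$ is primitive, apply the first pairing axiom, and collapse with the counit. However, the one explicit expansion you write down is wrong. Since you postpone the sign check, this is exactly where your argument would break.

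In $\lra{ab}{x}=(-1)^{|b||x_{(1)}|}\lra{a}{x_{(1)}}\lra{b}{x_{(2)}}$ with $\Delta(x)=x\otimes 1+1\otimes x$, the Koszul sign belongs to the summand $x_{(1)}\otimes x_{(2)}=x\otimes 1$, where $\epsilon(b)$ kills it. The summand $1\otimes x$ carries no sign. Hence
\[
\lra{f_{(2)}g_{(2)}}{x}=\lra{f_{(2)}}{x}\epsilon(g_{(2)})+\epsilon(f_{(2)})\lra{g_{(2)}}{x},
\]
which is the form the paper uses.

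Your extra factor $(-1)^{|x||g_{(2)}|}$ equals $(-1)^{|x|}$ on the surviving terms, because the pairing is even. Following your formula, the second term becomes $(-1)^{|x||f|+|x|}f\,\rho_2(x)(g)$, which is the wrong sign for odd $x$. The same slip in the $\rho_1$ expansion of $\lra{f_{(1)}g_{(1)}}{x}$ gives the same wrong sign.

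This already shows up for $n=1$. The functional $F'_{11}$ vanishes on $1$ and on $I_e^2$, so
\[
\lra{x_{11}y_{11}}{F'_{11}}=F'_{11}\bigl((x_{11}-1)y_{11}+y_{11}\bigr)=1,
\]
whereas your formula gives $-1$. With the corrected expansion, your parity bookkeeping (evenness of the pairing, counit forcing even components) does yield exactly $(-1)^{|x||f|}$ in both cases, as in the paper.

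Separately, your closing sanity check cannot confirm the claim. The values of $\rho_i(x)$ on the generators $x_{ij},y_{ij}$ determine $\rho_i(x)$ on all of $\co(Q(n))$ only once you already know $\rho_i(x)$ is a superderivation, so that argument is circular. The paper uses that reasoning in the other direction, in Proposition~\ref{prop:iso}, which relies on the present proposition.
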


\begin{proof}
For \(f,g\in\co(Q(n))\) and \(x,y\in\fq(n)\), we have
\begin{align*}
\rho_1(x)(fg)&=(-1)^{|x|}\lra{(fg)_{(1)}}{S^{-1}(x)}(fg)_{(2)}\\
&=(-1)^{|x|+|f_{(2)}||g_{(1)}|}\lra{f_{(1)}g_{(1)}}{S^{-1}(x)}f_{(2)}g_{(2)}\\
&=(-1)^{|x|+|f_{(2)}||g_{(1)}|}(\lra{f_{(1)}}{S^{-1}(x)}\epsilon(g_{(1)})+\epsilon(f_{(1)})\lra{g_{(1)}}{S^{-1}(x)})f_{(2)}g_{(2)}\\
&=\rho_1(x)(f)g+(-1)^{|x||f|}f\rho_1(x)(g),\\
\rho_2(y)(fg)&=(-1)^{|y||fg|}(fg)_{(1)}\lra{(fg)_{(2)}}{y}\\
&=(-1)^{|y||fg|+|f_{(2)}||g_{(1)}|}f_{(1)}g_{(1)}\lra{f_{(2)}g_{(2)}}{y}\\
&=(-1)^{|y||fg|+|f_{(2)}||g_{(1)}|}f_{(1)}g_{(1)}(\lra{f_{(2)}}{y}\epsilon(g_{(2)})+\epsilon(f_{(2)})\lra{g_{(2)}}{y})\\
&=\rho_2(y)(f)g+(-1)^{|y||f|}f\rho_2(y)(g).
\end{align*}
Hence, \(\rho_1(x),\,\rho_2(y)\in\Der(\co(Q(n)))\).
\end{proof}

\begin{remark}\label{rmk:der}
Proposition~\ref{prop:der} follows from the following algebra version of Remark~\ref{rmk:dualmod}. If \((M,\delta_1)\) is a left \(H\)-comodule algebra, then the induced module structure \(\rho_1\) makes \(M\) into a left \(K\)-module algebra. Similarly, if \((M,\delta_2)\) is a right \(H\)-comodule algebra, then the induced module structure \(\rho_2\) makes \(M\) into a left \(K\)-module algebra. In particular, primitive elements of \(K\) act on \(M\) by superderivations.
\end{remark}

Let \(\lambda_\Omega:\Der(\co(Q(n)))\to\Der(\co(\Omega))\) be the Lie superalgebra homomorphism obtained from Corollary~\ref{cor:der-localization} by localizing \(\co(Q(n))\) at \(d_1\cdots d_{n-1}\). Then the maps
\[
\lambda_\Omega\circ\rho_1,\,\lambda_\Omega\circ\rho_2:
\fq(n)\to\Der(\co(\Omega))
\]
define the regular representations on \(\co(\Omega)\), which we still denote by \(\rho_1\) and \(\rho_2\).

\subsection{Nilpotent regular representations}

We use left multiplication by \(U^-\) and right multiplication by \(U^+\) on \(\Omega\) to define the regular \(\fn^-\)- and \(\fn^+\)-actions on \(\co(\Omega)\).

Left multiplication by \(U^-\) on \(\Omega\) gives a morphism
\[
\mu_\Omega^-:U^-\times\Omega\to\Omega.
\]
Its pullback gives a left \(\co(U^-)\)-comodule structure map
\[
\delta_\Omega^-:\co(\Omega)\to\co(U^-)\otimes\co(\Omega),\quad
f\mapsto f_{(-1)}\otimes f_{(0)}.
\]
By Remark~\ref{rmk:dualmod}, the map \(\delta_\Omega^-\) induces an action of \(\fn^-\) on \(\co(\Omega)\), given by
\[
\eta^-(x)f=(-1)^{|x|}\lra{f_{(-1)}}{S^{-1}(x)}f_{(0)},
\quad x\in U(\fn^-),f\in\co(\Omega).
\]

Right multiplication by \(U^+\) on \(\Omega\) gives a morphism
\[
\mu_\Omega^+:\Omega\times U^+\to\Omega.
\]
Its pullback gives a right \(\co(U^+)\)-comodule structure map
\[
\delta_\Omega^+:\co(\Omega)\to\co(\Omega)\otimes\co(U^+),\quad
f\mapsto f_{(0)}\otimes f_{(1)}.
\]
The map \(\delta_\Omega^+\) induces an action of \(\fn^+\) on \(\co(\Omega)\), given by
\[
\eta^+(y)f=(-1)^{|y||f|}f_{(0)}\lra{f_{(1)}}{y},
\quad y\in U(\fn^+),\ f\in\co(\Omega).
\]

\begin{proposition}
Nilpotent subalgebras \(\fn^-\) and \(\fn^+\) act on \(\co(\Omega)\) by superderivations. More precisely,
\[
\eta^-(\fn^-),\,\eta^+(\fn^+)\subseteq \Der(\co(\Omega)).
\]
\end{proposition}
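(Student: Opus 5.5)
We mimic the proof of Proposition~\ref{prop:der}, replacing the coproduct of \(\co(Q(n))\) by the coaction maps \(\delta_\Omega^\pm\). The key structural input is that \(\delta_\Omega^-\) and \(\delta_\Omega^+\) are homomorphisms of superalgebras. They are pullbacks of the morphisms of affine superschemes \(\mu_\Omega^-\) and \(\mu_\Omega^+\), so \(\co(\Omega)\) is a left \(\co(U^-)\)-comodule algebra and a right \(\co(U^+)\)-comodule algebra. Concretely,
\[
(fg)_{(-1)}\otimes(fg)_{(0)}=(-1)^{|f_{(0)}||g_{(-1)}|}f_{(-1)}g_{(-1)}\otimes f_{(0)}g_{(0)},
\]
and similarly for \(\delta_\Omega^+\). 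One point needs checking: that \(\mu_\Omega^\pm\) really land in \(\Omega\). This holds because left multiplication by a lower unipotent element and right multiplication by an upper unipotent element preserve the leading principal minors \(d_k\). So \(\delta_\Omega^\pm\) are well defined on the localization, and coassociativity and counitality follow from the group axioms.

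Next, fix \(x\in\fn^-\) and \(y\in\fn^+\). These are primitive in \(U(\fn^\mp)\), so \(\Delta(x)=x\otimes1+1\otimes x\). The Hopf pairing then gives
\[
\lra{ab}{x}=\lra{a}{x}\epsilon(b)+\epsilon(a)\lra{b}{x}
\]
for \(a,b\in\co(U^\pm)\), up to the Koszul sign coming from the definition of a Hopf pairing. Since \(S^{-1}(x)=-x\) is again primitive, the same identity holds with \(S^{-1}(x)\) in place of \(x\).

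We then expand
\[
\eta^-(x)(fg)=(-1)^{|x|}\lra{(fg)_{(-1)}}{S^{-1}(x)}(fg)_{(0)}
\]
using the comodule algebra property and the derivation identity for the pairing. Counitality, in the form \(\epsilon(f_{(-1)})f_{(0)}=f\), collapses the two resulting terms to
\[
\eta^-(x)(f)\,g+(-1)^{|x||f|}f\,\eta^-(x)(g).
\]
The computation for \(\eta^+(y)\) is identical to the \(\rho_2\) computation in Proposition~\ref{prop:der}, using \(f_{(0)}\epsilon(f_{(1)})=f\). Alternatively, both cases are instances of Remark~\ref{rmk:der}, applied to the comodule algebras \((\co(\Omega),\delta_\Omega^\pm)\) and the Hopf pairings \(\co(U^\pm)\otimes U(\fn^\pm)\to\bc\). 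We will present the explicit computation for completeness.

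The main obstacle is bookkeeping the Koszul signs in the expansion for \(\eta^-\). The parities of \(f_{(-1)}\), \(f_{(0)}\), \(g_{(-1)}\) and \(g_{(0)}\) interact with the parity of \(x\). The pairing is even, so only terms with \(|f_{(-1)}|=|x|\) and \(|g_{(-1)}|=\bar0\) survive in one summand, and the reverse in the other. Using this, the sign \((-1)^{|f_{(0)}||g_{(-1)}|}\) combines with the pairing's sign to produce exactly \((-1)^{|x||f|}\), since \(|f|=|f_{(-1)}|+|f_{(0)}|\). We will carry this out carefully, exactly in parallel with the displayed computation in Proposition~\ref{prop:der}.
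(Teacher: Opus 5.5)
Your proposal is correct and is essentially the paper's argument. The paper simply invokes Remark~\ref{rmk:der} for the comodule algebras \((\co(\Omega),\delta_\Omega^\pm)\), and your explicit computation is that remark unpacked exactly as in Proposition~\ref{prop:der}.

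One small correction to your side remark. In \(Q(n)\), left multiplication by \(U^-\) and right multiplication by \(U^+\) do not literally preserve the minors \(d_k\). For \(n=2\), left multiplication by a lower unipotent element with odd entry \(v_{21}\) changes \(d_2\) by \(v_{21}(a_{11}b_{12}-a_{12}b_{11})\). They do preserve the \(d_k\) modulo nilpotents, and that is all you need for \(\mu_\Omega^\pm\) to land in \(\Omega\).
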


\begin{proof}
As in Proposition~\ref{prop:der}, this follows from Remark~\ref{rmk:der} applied to the two comodule algebra structures on \(\co(\Omega)\).
\end{proof}

By uniqueness of superderivation extensions, these actions coincide with the regular actions on \(\co(\Omega)\) defined in Subsection~\ref{sec:regular-big-cell}.

\begin{proposition}\label{prop:eta-rho}
For \(x\in\fn^-\) and \(y\in\fn^+\), we have
\[
\eta^-(x)=\rho_1(x),\quad\eta^+(y)=\rho_2(y).
\]
\end{proposition}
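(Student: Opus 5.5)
By Proposition~\ref{prop:der} and the remarks after it, all four operators \(\eta^-(x)\), \(\rho_1(x)\), \(\eta^+(y)\), \(\rho_2(y)\) are superderivations of \(\co(\Omega)\). A superderivation of \(\co(\Omega)\) is determined by its values on the generators \(x_{ij},y_{ij}\) of \(\co(Q(n))\). Indeed, \(\co(\Omega)\) is a localization of \(\bc[x_{ij},y_{ij}]\), so the uniqueness in Lemma~\ref{lem:der} applies. Alternatively, two superderivations agreeing on a generating set agree on the subalgebra it generates, and then on its localization by the quotient formula. Therefore I will prove the identities by checking \(\eta^-(x)f=\rho_1(x)f\) and \(\eta^+(y)f=\rho_2(y)f\) for \(f\in\{x_{ij},y_{ij}\}\) and \(x,y\) running over the standard basis of \(\fn^\mp\).

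The first key step is to compute the coactions on generators. The morphism \(\mu_\Omega^-\) is the restriction of the multiplication \(U^-\times Q(n)\to Q(n)\) to the open subscheme \(U^-\times\Omega\). Its pullback therefore agrees on the generators with \((\pi_-\otimes\iota)\circ\Delta\), where \(\pi_-:\co(Q(n))\to\co(U^-)\) is the quotient map and \(\iota:\co(Q(n))\to\co(\Omega)\) is the localization. Thus
\[
\delta_\Omega^-(x_{ij})=\sum_k\bigl(\pi_-(x_{ik})\otimes x_{kj}+\pi_-(y_{ik})\otimes y_{kj}\bigr),
\]
and similarly for \(y_{ij}\) and for \(\delta_\Omega^+\) with \(\pi_+\) in the right tensor factor.

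The second key step is to check that the pairing is compatible with the quotient maps. For \(u\in U(\fn^-)\subset U(\fq(n))\) and \(f\in\co(Q(n))\), I need \(\lra{\pi_-(f)}{u}=\lra{f}{u}\). This holds because the identification \(U(\fn^-)\cong\Dist(U^-)\subset\Dist(Q(n))\) is the one induced by the closed embedding \(U^-\hookrightarrow Q(n)\). Under it a distribution on \(U^-\) is viewed on \(Q(n)\) by precomposition with \(\pi_-\). The same holds for \(\pi_+\) and \(U(\fn^+)\). I also need \(S^{-1}(x)\in U(\fn^-)\) for \(x\in\fn^-\), which holds since \(S^{-1}(x)=-x\) for primitive \(x\).

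Combining the two steps gives, for a generator \(f\),
\[
\eta^-(x)f=(-1)^{|x|}\lra{\pi_-(f_{(1)})}{S^{-1}(x)}f_{(2)}=(-1)^{|x|}\lra{f_{(1)}}{S^{-1}(x)}f_{(2)}=\rho_1(x)f.
\]
The argument for \(\eta^+\) and \(\rho_2\) is the same.

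The main obstacle is the compatibility of pairings in the second step. It depends on how \(\Dist(U^\pm)\) sits inside \(\Dist(Q(n))\), namely that \(\fn^\pm\) is identified through the differential of the inclusion \(U^\pm\subset Q(n)\). One can check this directly on the dual basis: \(E'_{ij}\) and \(F'_{ij}\) for \(i\neq j\) pair with \(x_{kl}-\delta_{kl}\) and \(y_{kl}\) the same way before and after reducing modulo \(I_\pm\), since the ideal \(I_\pm\) lies in the span of \(I_e^2\) and the coordinates on which these elements vanish. The remaining sign and Sweedler bookkeeping is routine.
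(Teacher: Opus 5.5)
Your proposal is correct and follows the same route as the paper: you show that the two operators agree on \(\co(Q(n))\) (you check this on the generators \(x_{ij},y_{ij}\)), and then conclude by uniqueness of superderivation extensions to the localization \(\co(\Omega)\). The paper only asserts the agreement on \(\co(Q(n))\) ``by construction''; your explicit verification, via \(\delta_\Omega^-\circ\iota=(\pi_-\otimes\iota)\circ\Delta\) and the compatibility \(\lra{\pi_\pm(f)}{u}=\lra{f}{u}\) for \(u\in U(\fn^\pm)\), spells out what that phrase leaves implicit.
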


\begin{proof}
By construction, on the subalgebra \(\co(Q(n))\subset\co(\Omega)\), the actions induced by \(\delta_\Omega^-\) and \(\delta_\Omega^+\) coincide with the restrictions of the regular actions. Namely, for \(x\in\fn^-\) and \(y\in\fn^+\),
\[
\eta^-(x)|_{\co(Q(n))}=\rho_1(x)|_{\co(Q(n))},\quad
\eta^+(y)|_{\co(Q(n))}=\rho_2(y)|_{\co(Q(n))}.
\]
The algebra \(\co(\Omega)\) is a localization of \(\co(Q(n))\), and both sides are superderivations of \(\co(\Omega)\) agreeing on \(\co(Q(n))\). Hence the uniqueness of extensions implies the desired identities.
\end{proof}

\subsection{Diagonal regular representations}

We use left and right multiplications of \(H\) on \(\Omega\) to define the regular \(\fh\)-actions on \(\co(\Omega)\). Let
\[
\mu^-_H:H\times\Omega\to\Omega,\quad
\mu^+_H:\Omega\times H\to\Omega
\]
be the left and right multiplication morphisms, respectively. Denote their pullbacks by
\[
\delta_H^-:\co(\Omega)\to\co(H)\otimes\co(\Omega),\quad
\delta_H^+:\co(\Omega)\to\co(\Omega)\otimes\co(H).
\]
As in the nilpotent case, these comodule structure maps induce \(\fh\)-actions on \(\co(\Omega)\). For \(z\in U(\fh)\) and \(f\in\co(\Omega)\), define
\[
\eta_H^-(z)f=(-1)^{|z|}\lra{f_{(-1)}}{S^{-1}(z)}f_{(0)},\quad \eta_H^+(z)f=(-1)^{|z||f|}f_{(0)}\lra{f_{(1)}}{z},
\]
where \(\delta_H^-(f)=f_{(-1)}\otimes f_{(0)}\) and \(\delta_H^+(f)=f_{(0)}\otimes f_{(1)}\). 

The same argument as in the nilpotent case shows that these actions agree with the regular actions on \(\co(\Omega)\). That is, for \(x,y\in\fh\),
\[
\eta_H^-(x)=\rho_1(x),\quad\eta_H^+(y)=\rho_2(y).
\]

Let
\[
\alpha^-:H\times U^-\overset{\sim}{\to}U^-\times H,\quad
\alpha^+:U^+\times H\overset{\sim}{\to}H\times U^+
\]
be the isomorphisms induced by the Gauss decomposition. We write \((\alpha^-)^*\) and \((\alpha^+)^*\) for the corresponding pullback maps on coordinate superalgebras. 

For the left \(H\)-multiplication, we have the commutative diagram
\[
\begin{tikzcd}[column sep=large]
H\times U^-\times H\times U^+
\arrow[rr,"\alpha^-\times\id\times\id"]
\arrow[d,"\id\times\gamma"'] &
&
U^-\times H\times H\times U^+
\arrow[d,"\id\times\mu_H\times\id"] \\
H\times\Omega \arrow[r,"\mu^-_H"']&
\Omega &
U^-\times H\times U^+ \arrow[l,"\gamma"].
\end{tikzcd}
\]
It gives
\begin{equation}\label{eq:dh1}
(\id\otimes\,\Gamma)\circ\delta_H^-
=((\alpha^-)^*\otimes\id\otimes\id)\circ(\id\otimes\,\Delta\otimes\id)\circ\Gamma.
\end{equation}
Similarly, the commutative diagram for right \(H\)-multiplication gives
\[
(\Gamma\otimes\id)\circ\delta_H^+
=(\id\otimes\id\otimes(\alpha^+)^*)\circ(\id\otimes\,\Delta\otimes\id)\circ\Gamma.
\]

We define the regular \(\fh\)-actions on \(\co(H)\) by
\begin{align*}
\vartheta_H^-(z)g&=(-1)^{|z|}\lra{g_{(1)}}{S^{-1}(z)}g_{(2)},\\
\vartheta_H^+(z)g&=(-1)^{|z||g|}g_{(1)}\lra{g_{(2)}}{z},\quad z\in U(\fh),\ g\in\co(H).
\end{align*}

\begin{proposition}\label{prop:diag-rho}
For \(z\in\fh\) and \(g\in\co(H)\), we have
\begin{align*}
\rho_1(z)\,\Gamma^{-1}(1\otimes g\otimes 1)=&\Gamma^{-1}(1\otimes\vartheta_H^-(z)g\otimes 1),\\
\rho_2(z)\,\Gamma^{-1}(1\otimes g\otimes 1)=&\Gamma^{-1}(1\otimes\vartheta_H^+(z)g\otimes 1).
\end{align*}
\end{proposition}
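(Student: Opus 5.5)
We prove the first identity; the second is symmetric. Set \(f:=\Gamma^{-1}(1\otimes g\otimes 1)\). By the identification \(\eta_H^-(z)=\rho_1(z)\) for \(z\in\fh\), it suffices to compute \(\eta_H^-(z)f\). This is \((-1)^{|z|}\lra{f_{(-1)}}{S^{-1}(z)}f_{(0)}\), where \(\delta_H^-(f)=f_{(-1)}\otimes f_{(0)}\). The plan is to compute \(\delta_H^-(f)\) after applying \(\id\otimes\Gamma\), using \eqref{eq:dh1}.

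\textbf{Step 1.} Apply \eqref{eq:dh1} to \(f\). The right-hand side starts from \(\Gamma(f)=1\otimes g\otimes 1\). The map \(\id\otimes\Delta\otimes\id\) sends it to \(1\otimes g_{(1)}\otimes g_{(2)}\otimes 1\). We then apply \((\alpha^-)^*\otimes\id\otimes\id\), where \((\alpha^-)^*:\co(U^-)\otimes\co(H)\to\co(H)\otimes\co(U^-)\) is the pullback of \(\alpha^-\).

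\textbf{Step 2, the main obstacle.} We need \((\alpha^-)^*(1\otimes h)\) for \(h\in\co(H)\), which means understanding \(\alpha^-\). On points, \(\alpha^-(h',u)=(u',h'')\) is characterized by \(h'u=u'h''\) with \(u'\in U^-\) and \(h''\in H\). Comparing diagonal blocks, \(h''=h'\) and \(u'=h'uh'^{-1}\).

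This must be checked carefully in the queer setting, since the odd diagonal part prevents naive commutation. In a product of block matrices of the form \(\begin{pmatrix}a&b\\b&a\end{pmatrix}\), the diagonal entries of \(h'u\) depend only on the diagonal of \(h'\) and the diagonal of \(u\), because \(u\) is unipotent with strictly lower odd part. Hence \(h''=h'\) exactly.

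The \(H\)-component of \(\alpha^-\) is therefore the projection onto the \(H\) factor. Consequently \((\alpha^-)^*(1\otimes h)=h\otimes 1\).

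\textbf{Step 3.} Combining Steps 1 and 2,
\[
(\id\otimes\Gamma)\delta_H^-(f)=g_{(1)}\otimes 1\otimes g_{(2)}\otimes 1 .
\]
Since \(\Gamma\) is an isomorphism, this gives \(\delta_H^-(f)=g_{(1)}\otimes\Gamma^{-1}(1\otimes g_{(2)}\otimes 1)\). No extra signs arise, because the inserted \(1\)'s are even.

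Substituting into the formula for \(\eta_H^-(z)\) gives
\[
(-1)^{|z|}\lra{g_{(1)}}{S^{-1}(z)}\,\Gamma^{-1}(1\otimes g_{(2)}\otimes 1)=\Gamma^{-1}(1\otimes\vartheta_H^-(z)g\otimes 1).
\]

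For \(\rho_2\), we use the right-multiplication identity instead. Here \((\alpha^+)^*(h\otimes 1)=1\otimes h\), since the \(H\)-component of \(u h'\) in \(H\times U^+\) is again \(h'\). This gives \(\delta_H^+(f)=\Gamma^{-1}(1\otimes g_{(1)}\otimes1)\otimes g_{(2)}\). Inserting this into \(\eta_H^+\) yields the second identity, using \(|f|=|g|\).
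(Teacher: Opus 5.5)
Your proposal is correct and follows essentially the same route as the paper: apply \eqref{eq:dh1} to \(f=\Gamma^{-1}(1\otimes g\otimes 1)\) to get \((\id\otimes\Gamma)\,\delta_H^-(f)=g_{(1)}\otimes 1\otimes g_{(2)}\otimes 1\), then pair the first factor with \(S^{-1}(z)\) and apply \(\Gamma^{-1}\). The one difference is that your Step 2 explicitly justifies \((\alpha^-)^*(1\otimes h)=h\otimes 1\) (and its analogue for \(\alpha^+\)), i.e.\ that the \(H\)-component of \(\alpha^\pm\) is the projection onto \(H\); the paper uses this fact without comment.
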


\begin{proof}
We prove the first identity; the second is analogous. Set \(f=\Gamma^{-1}(1\otimes g\otimes 1)\). By \eqref{eq:dh1},
\[
(\id\otimes\,\Gamma)\,\delta_H^-(f)
=g_{(1)}\otimes 1\otimes g_{(2)}\otimes 1.
\]
Therefore,
\begin{align*}
\Gamma(\rho_1(z)f)&=(-1)^{|z|}((\lra{-}{S^{-1}(z)}\otimes\Gamma)\circ\delta_H^-)(f)\\
&=1\otimes (-1)^{|z|}\lra{g_{(1)}}{S^{-1}(z)}g_{(2)}\otimes 1\\
&=1\otimes \vartheta_H^-(z)g\otimes 1.
\end{align*}
Applying \(\Gamma^{-1}\) to both sides of this equality yields the first identity.
\end{proof}

\section{Highest weight vectors}\label{sec:singular}

\subsection{Singular vectors in \texorpdfstring{\(\co(\Omega)\)}{O(Omega)}}\label{subsec:sing}

We now use the Gauss decomposition to describe the singular vectors for the regular representations on \(\co(\Omega)\). We define the \((\fn^-\oplus\fn^+)\)-singular subspace of \(\co(\Omega)\) by
\[
\Sing(\co(\Omega)):=\{f\in\co(\Omega)\mid \rho_1(x)f=\rho_2(y)f=0,\forall x\in\fn^-,y\in\fn^+\}.
\]
Let \(\Gamma\) denote the Gauss decomposition isomorphism in Theorem~\ref{thm:gauss}.

\begin{theorem}\label{thm:singular-alpha}
The \((\fn^-\oplus\fn^+)\)-singular subspace of \(\co(\Omega)\) is given by
\[
\Sing(\co(\Omega))=\Gamma^{-1}(1\otimes \co(H)\otimes 1).
\]
\end{theorem}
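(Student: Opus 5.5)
Transport the whole problem through the Gauss decomposition isomorphism \(\Gamma\) of Theorem~\ref{thm:gauss}, so that the \(\fn^-\)-action only touches the \(\co(U^-)\) factor and the \(\fn^+\)-action only touches the \(\co(U^+)\) factor. Then determine the invariants of the regular actions on \(\co(U^\pm)\) using the nondegenerate pairings.

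\textbf{Step 1: compatibility with the comodule maps.} By Proposition~\ref{prop:eta-rho} we may replace \(\rho_1|_{\fn^-}\) by \(\eta^-\) and \(\rho_2|_{\fn^+}\) by \(\eta^+\). Left multiplication \(U^-\times U^-\times H\times U^+\to U^-\times H\times U^+\) is just multiplication in the first factor, by associativity of the group law and \(\gamma\) being the restricted multiplication. Hence
\[
(\id\otimes\Gamma)\circ\delta^-_\Omega=(\Delta_{U^-}\otimes\id\otimes\id)\circ\Gamma,
\]
and symmetrically
\[
(\Gamma\otimes\id)\circ\delta^+_\Omega=(\id\otimes\id\otimes\Delta_{U^+})\circ\Gamma.
\]
Consequently, under \(\Gamma\) the operators become
\[
\Gamma\,\eta^-(x)\,\Gamma^{-1}=\vartheta^-(x)\otimes\id\otimes\id,
\qquad
\Gamma\,\eta^+(y)\,\Gamma^{-1}=\id\otimes\id\otimes\vartheta^+(y).
\]
Here \(\vartheta^\mp\) are the regular actions of \(\fn^\mp\) on \(\co(U^\mp)\), defined by the same formulas as \(\vartheta_H^\pm\). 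There are Koszul signs from passing \(x\) or \(y\) past the other tensor factors. These signs only multiply homogeneous components by \(\pm1\), so they do not affect kernels. This is the same computation as in Proposition~\ref{prop:diag-rho}.

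\textbf{Step 2: reduce to the unipotent factors.} By Step~1, \(\Gamma(\Sing(\co(\Omega)))\) equals
\[
\co(U^-)^{\fn^-}\otimes\co(H)\otimes\co(U^+)^{\fn^+},
\]
where \((-)^{\fn}\) denotes the common kernel of the actions. This follows by choosing a basis of \(\co(H)\otimes\co(U^+)\) and noting that \(\vartheta^-(x)\otimes\id\) kills an element exactly when each coefficient lies in \(\ker\vartheta^-(x)\), and likewise on the other side. So it suffices to show
\[
\co(U^\pm)^{\fn^\pm}=\bc\cdot1.
\]
The inclusion \(\supseteq\) is clear, since the counit kills \(\fn^\pm\).

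\textbf{Step 3 (main obstacle): invariants of the unipotent factors.} Take \(g\in\co(U^+)\) with \(\vartheta^+(y)g=0\) for all \(y\in\fn^+\), hence for all \(y\) in the augmentation ideal \(U(\fn^+)^+\). For any \(u\in U(\fn^+)\) write \(u=\epsilon(u)+u'\) with \(u'\in U(\fn^+)^+\). Then
\[
\lra{g}{u}=\epsilon\bigl(\vartheta^+(u)g\bigr)\ (\text{up to sign})=\epsilon(u)\,\epsilon(g).
\]
Therefore \(g-\epsilon(g)1\) pairs to zero with all of \(U(\fn^+)\). By nondegeneracy of \(\co(U^+)\otimes U(\fn^+)\to\bc\), stated after Proposition~\ref{prop:nond}, we get \(g=\epsilon(g)1\).

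The \(U^-\) case is identical. One uses \(\vartheta^-(x)g=(-1)^{|x|}\lra{g_{(1)}}{S^{-1}(x)}g_{(2)}\), applies \(\epsilon\) to the second factor, and notes that \(S^{-1}\) is bijective and preserves the augmentation ideal.

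The delicate point is justifying the nondegeneracy for the unipotent subgroups, that is, the injectivity of \(\co(U^\pm)\to\Dist(U^\pm)^*\). The paper asserts this by the same argument as Proposition~\ref{prop:nond}. If needed, one can check it directly: \(\co(U^\pm)\) is a polynomial superalgebra in the off-diagonal coordinates, and \(\Dist\) at the identity separates polynomials. This is the step I would make explicit.
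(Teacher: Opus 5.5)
Your proposal is correct and follows the paper's proof: you transport the actions through $\Gamma$ via the two commutative diagrams, so that $\fn^-$ and $\fn^+$ act only on the outer tensor factors, and then show that the invariants in $\co(U^\pm)$ are scalars using nondegeneracy of the pairing with $U(\fn^\pm)$. The only difference is cosmetic and lies in Step~3: you apply $\epsilon$ first to get $\lra{g}{u}=\epsilon(u)\epsilon(g)$ and invoke nondegeneracy on $\co(U^\pm)$ itself, whereas the paper uses nondegeneracy to deduce $\Delta(f)=1\otimes f$ and then applies $\id\otimes\epsilon$.
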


For the proof of Theorem~\ref{thm:singular-alpha}, the key point is the following two observations on the regular representations of the nilpotent subalgebras.

\begin{lemma}\label{lem:sing-down}
For \(x\in U(\fn^-)\) and \(f\in\co(U^-)\), define
\[
\vartheta^-(x)f=(-1)^{|x|}\lra{f_{(1)}}{S^{-1}(x)}f_{(2)}.
\]
Then
\[
\{f\in \co(U^-)\mid \vartheta^-(x)f=0,\forall x\in \fn^-\}=\bc.
\]
\end{lemma}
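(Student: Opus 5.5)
The plan is to turn the vanishing of the action into the vanishing of a pairing, and then use the nondegeneracy of the Hopf pairing \(\lra{-}{-}:\co(U^-)\otimes U(\fn^-)\to\bc\) recorded after Proposition~\ref{prop:nond}.

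First, \(\vartheta^-\) is an action of \(U(\fn^-)\) on \(\co(U^-)\); the computation is verbatim that of Proposition~\ref{prop:rho1}, or it follows from Remark~\ref{rmk:dualmod}. The inclusion \(\bc\subseteq\) (invariants) is immediate. Indeed, \(\Delta(1)=1\otimes 1\), so for \(x\in\fn^-\) we get
\[
\vartheta^-(x)1=(-1)^{|x|}\lra{1}{S^{-1}(x)}=\pm\epsilon(S^{-1}(x))=\pm\epsilon(x)=0.
\]
Here we use that primitive elements lie in \(\ker\epsilon\).

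For the converse, let \(f\) be annihilated by all \(\vartheta^-(x)\) with \(x\in\fn^-\). Since \(\vartheta^-\) is an algebra homomorphism, \(f\) is then annihilated by every element of the augmentation ideal \(U(\fn^-)^+=\fn^-U(\fn^-)\). This ideal is the left ideal generated by \(\fn^-\), because \(U(\fn^-)\) is generated by \(\fn^-\).

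Next apply the counit \(\epsilon\) of \(\co(U^-)\) to \(\vartheta^-(x)f=0\). Using \(\lra{f_{(1)}}{u}\epsilon(f_{(2)})=\lra{f}{u}\), this gives
\[
\lra{f}{S^{-1}(x)}=0 \quad\text{for all } x\in U(\fn^-)^+.
\]
The antipode of \(U(\fn^-)\) is bijective and preserves \(\ker\epsilon\), since \(\epsilon\circ S=\epsilon\). Hence \(\lra{f}{u}=0\) for every \(u\in\ker\epsilon\).

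Now set \(g:=f-\epsilon(f)\cdot 1\). For \(u\in\ker\epsilon\) we have \(\lra{1}{u}=\epsilon(u)=0\), so \(\lra{g}{u}=0\). On the unit, \(\lra{g}{1}=\epsilon(f)-\epsilon(f)=0\). Since \(U(\fn^-)=\bc 1\oplus\ker\epsilon\), \(g\) pairs to zero with all of \(U(\fn^-)\). Left nondegeneracy of the pairing then gives \(g=0\), so \(f=\epsilon(f)\in\bc\).

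The only nontrivial input is this left nondegeneracy for \(U^-\). The paper asserts it via the same Brundan--Kleshchev argument as in Proposition~\ref{prop:nond}, which I would take as given. Note that \(\co(U^-)\) is a polynomial superalgebra in the variables \(x_{ij},y_{ij}\) with \(i>j\), so this is also directly plausible. The remaining steps are routine sign bookkeeping.
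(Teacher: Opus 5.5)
Your proof is correct and uses the same ingredients as the paper. You extend the annihilation from \(\fn^-\) to the augmentation ideal of \(U(\fn^-)\) via the homomorphism property of \(\vartheta^-\), and then invoke left nondegeneracy of the pairing \(\co(U^-)\otimes U(\fn^-)\to\bc\). The only difference is the order of steps: the paper uses nondegeneracy in the first tensor factor to get \(\Delta(f)=1\otimes f\) and then applies \(\id\otimes\epsilon\), whereas you apply \(\epsilon\) first and use nondegeneracy directly on \(f-\epsilon(f)\cdot 1\), which slightly streamlines the argument.
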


\begin{proof}
Clearly every scalar is annihilated by \(\fn^-\). Conversely, let \(f\in\co(U^-)\) be annihilated by \(\fn^-\). Since \(U(\fn^-)\) is generated by \(\fn^-\) and \(1\), it follows that
\[
\vartheta^-(u)f=\epsilon(u)f
\]
for all \(u\in U(\fn^-)\). Applying this identity to \(u=S(v)\), for \(v\in U(\fn^-)\), gives
\[
(-1)^{|v|}(\lra{-}{v}\otimes\id)(\Delta(f)-1\otimes f)=0,
\]
since the antipode preserves parity and the counit. By linearity and the nondegeneracy of the canonical pairing between \(\co(U^-)\) and \(U(\fn^-)\), we have \(\Delta(f)=1\otimes f\). Applying \(\id\otimes\,\epsilon\), we obtain \(f=\epsilon(f)\in\bc\).
\end{proof}

\begin{lemma}\label{lem:sing-up}
For \(y\in U(\fn^+)\) and \(f\in\co(U^+)\), define
\[
\vartheta^+(y)f=(-1)^{|y||f|}f_{(1)}\lra{f_{(2)}}{y}.
\]
Then
\[
\{f\in \co(U^+)\mid \vartheta^+(y)f=0,\forall y\in \fn^+\}=\bc.
\]
\end{lemma}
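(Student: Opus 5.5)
The plan mirrors the proof of Lemma~\ref{lem:sing-down}, now for the right regular action. Scalars are clearly annihilated: since \(\Delta(1)=1\otimes 1\) and \(\lra{1}{y}=\epsilon(y)=0\) for \(y\in\fn^+\), we get \(\vartheta^+(y)1=0\). For the converse, let \(f\in\co(U^+)\) satisfy \(\vartheta^+(y)f=0\) for all \(y\in\fn^+\). I will first note that \(\vartheta^+\) is an action of \(U(\fn^+)\) on \(\co(U^+)\); this is the computation of Proposition~\ref{prop:rho2}, with \(\co(Q(n))\) replaced by the Hopf superalgebra \(\co(U^+)\) and its evaluation pairing. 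Since \(U(\fn^+)\) is generated by \(1\) and \(\fn^+\), it then follows that \(\vartheta^+(u)f=\epsilon(u)f\) for all \(u\in U(\fn^+)\).

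Next I unwind this identity. We may assume \(f\) is homogeneous, since the condition is parity-compatible. For homogeneous \(u\), the identity reads
\[
(-1)^{|u||f|}(\id\otimes\lra{-}{u})(\Delta(f))=\epsilon(u)f=(\id\otimes\lra{-}{u})(f\otimes 1).
\]
The pairing \(\lra{-}{u}\) vanishes unless the degrees match, because it is even, and \(\epsilon(u)=0\) unless \(u\) is even. So the sign can be absorbed: in \(\Delta(f)=\sum f_{(1)}\otimes f_{(2)}\) the sign \((-1)^{|u||f|}\) is constant, and the terms with \(u\) odd appear only on the left-hand side. Let \(\Phi=\Delta(f)-f\otimes 1\). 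Separating by the parity of \(u\) then gives \((\id\otimes\lra{-}{u})(\Phi)=0\) for all \(u\in U(\fn^+)\).

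Finally, I will invoke the nondegeneracy of the pairing \(\co(U^+)\otimes U(\fn^+)\to\bc\) recorded after Proposition~\ref{prop:nond}. Write \(\Phi=\sum_i g_i\otimes h_i\) with the \(g_i\) linearly independent. The vanishing above forces \(\sum_i \lra{h_i}{u}g_i=0\), hence \(\lra{h_i}{u}=0\) for all \(i\) and all \(u\), and therefore \(h_i=0\). This gives \(\Delta(f)=f\otimes 1\). Applying \(\epsilon\otimes\id\) yields \(f=\epsilon(f)\cdot 1\in\bc\).

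The only step needing care is the sign bookkeeping in the second paragraph, which ensures that odd \(u\) contribute no spurious constraint. That the actions are well defined on \(\co(U^+)\) is the same verification as in Proposition~\ref{prop:rho2}. Unlike the left-action case, no antipode substitution is needed here.
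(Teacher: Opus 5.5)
Your proposal is correct and follows the paper's route: the paper proves this lemma by mirroring Lemma~\ref{lem:sing-down}. That argument upgrades the annihilation condition to \(\vartheta^+(u)f=\epsilon(u)f\) for all \(u\in U(\fn^+)\), uses the nondegeneracy of the pairing to get \(\Delta(f)=f\otimes 1\), and applies \(\epsilon\otimes\id\); your sign bookkeeping (odd \(u\) have \(\epsilon(u)=0\), even \(u\) carry no sign) is sound, and you are right that no antipode substitution is needed here.
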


\begin{proof}
The proof is analogous to that of Lemma~\ref{lem:sing-down}.
\end{proof}

\begin{proof}[Proof of Theorem~\ref{thm:singular-alpha}]
To treat \(\rho_1|_{\fn^-}\), consider the commutative diagram
\[
\begin{tikzcd}[column sep=large]
U^-\times U^-\times H\times U^+
\arrow[r,"\mu_{U^-}\times\id\times\id"]
\arrow[d,"\id\times\gamma"'] &
U^-\times H\times U^+ \arrow[d,"\gamma"] \\
U^-\times\Omega \arrow[r,"\mu_\Omega^-"']& \Omega.
\end{tikzcd}
\]
It gives
\[
(\id\otimes\,\Gamma)\circ\delta_\Omega^-=(\Delta\otimes\id\otimes\id)\circ\Gamma.
\]
Together with Proposition~\ref{prop:eta-rho}, this implies that, for \(f\in\co(\Omega)\) and \(x\in U(\fn^-)\),
\begin{align*}
\Gamma(\rho_1(x)f)&=(-1)^{|x|}
((\lra{-}{S^{-1}(x)}\otimes\Gamma)\circ\delta_\Omega^-)(f)\\
&=(-1)^{|x|}
((\lra{-}{S^{-1}(x)}\otimes\id\otimes\id\otimes\id)
\circ(\Delta\otimes\id\otimes\id)\circ\Gamma)(f)\\
&=((\vartheta^-(x)\otimes\id\otimes\id)\circ\Gamma)(f).
\end{align*}
Combining this with Lemma~\ref{lem:sing-down}, we have
\[
\{f\in\co(\Omega)\mid \rho_1(x)f=0,\forall x\in\fn^-\}=\Gamma^{-1}(1\otimes\co(H)\otimes \co(U^+)).
\]

Similarly, for \(\rho_2|_{\fn^+}\), the corresponding diagram gives
\[
(\Gamma\otimes\id)\circ\delta_\Omega^+=(\id\otimes\id\otimes\Delta)\circ\Gamma.
\]
Hence, for \(f\in\co(\Omega)\) and \(y\in U(\fn^+)\),
\[
\Gamma(\rho_2(y)f)=((\id\otimes\id\otimes\,\vartheta^+(y))\circ\Gamma)(f).
\]
Combining this with Lemma~\ref{lem:sing-up}, we have
\[
\{f\in\co(\Omega)\mid \rho_2(y)f=0,\ \forall y\in\fn^+\}=\Gamma^{-1}(\co(U^-)\otimes\co(H)\otimes 1).
\]
The theorem follows by combining these two results.
\end{proof}

\subsection{Highest weight vectors in Howe duality}\label{sec:sing}

We realize the Howe duality on \(\co(\cx(n))\subset\co(Q(n))\), thereby identifying its highest weight vectors with singular vectors for the regular representations on \(\co(\cx(n))\).

The multiplication map
\[
Q(n)\times \cx(n)\times Q(n)\to \cx(n)
\]
implies that the regular actions \(\rho_1,\rho_2\) of \(\fq(n)\) restrict to actions on \(\co(\cx(n))\). 

We identify both \(\co(\cx(n))\) and \(S\bigl(\bc^{n^2|n^2}\bigr)\) with the subalgebra
\[
\bc[x_{ij},y_{ij}\mid 1\leq i,j\leq n]\subset\cpinf.
\]
Let \(\sigma\) be the automorphism of \(\fq(n)\) defined by
\[
\sigma(E_{ij})=-E_{ji},\quad\sigma(F_{ij})=\sqrt{-1}F_{ji}
\quad (1\leq i,j\leq n).
\]

\begin{proposition}\label{prop:iso}
For \(x,y\in\fq(n)\) and \(f\in\co(\cx(n))\), we have
\[
\rho_1(x)f=\varrho_1(\sigma(x))f,\quad
\rho_2(y)f=\varrho_2(y)f.
\]
\end{proposition}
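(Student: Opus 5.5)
Both sides of each identity are superderivations of \(\co(\cx(n))=\bc[x_{ij},y_{ij}]\). On the right this is clear from the explicit differential operators \(\varrho_1,\varrho_2\). On the left it follows from Proposition~\ref{prop:der}, after restricting to the invariant subalgebra \(\co(\cx(n))\). Both sides are also linear in \(x\) (resp.\ \(y\)), and \(\sigma\) is linear. So it suffices to check the identities for \(x,y\) among the basis elements \(E_{ij},F_{ij}\), and only on the generators \(x_{kl},y_{kl}\). The plan is therefore to compute \(\rho_1\) and \(\rho_2\) of the basis elements on the coordinate functions, using the explicit comultiplication of \(\co(Q(n))\) and the pairing with \(\Lie(Q(n))\).

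First I would record the pairing values. Under the identification \(E_{ij}\mapsto E'_{ij}\), \(F_{ij}\mapsto\sqrt{-1}F'_{ij}\), with \(E'_{ij},F'_{ij}\) dual to the classes of \(x_{ij}-\delta_{ij},y_{ij}\), we get
\[
\lra{x_{kl}}{E_{ij}}=\delta_{ki}\delta_{lj},\quad \lra{y_{kl}}{E_{ij}}=0,\quad \lra{x_{kl}}{F_{ij}}=0,\quad \lra{y_{kl}}{F_{ij}}=\sqrt{-1}\,\delta_{ki}\delta_{lj}.
\]
For primitive \(x\in\fq(n)\) we have \(S^{-1}(x)=-x\). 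Then, with \(\Delta(x_{kl})=\sum_p(x_{kp}\otimes x_{pl}+y_{kp}\otimes y_{pl})\) and \(\Delta(y_{kl})=\sum_p(x_{kp}\otimes y_{pl}+y_{kp}\otimes x_{pl})\), the computation is short. For example, \(\rho_2(E_{ij})x_{kl}=\sum_p x_{kp}\lra{x_{pl}}{E_{ij}}=\delta_{lj}x_{ki}\), and \(\rho_2(F_{ij})x_{kl}=(-1)^{|F||x_{kl}|}\sum_p y_{kp}\lra{y_{pl}}{F_{ij}}=\sqrt{-1}\,\delta_{lj}y_{ki}\). These should match \(\varrho_2(E_{ij})x_{kl}=\delta_{lj}x_{ki}\) and \(\varrho_2(F_{ij})x_{kl}=\sqrt{-1}\delta_{lj}y_{ki}\). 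Similarly \(\rho_2(F_{ij})y_{kl}=-\sum_p x_{kp}\lra{y_{pl}}{F_{ij}}=-\sqrt{-1}\delta_{lj}x_{ki}\), using the sign \((-1)^{|F||y|}=-1\). This agrees with \(\varrho_2(F_{ij})y_{kl}=-\sqrt{-1}\delta_{lj}x_{ki}\).

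For \(\rho_1\) the pairing is applied to the left tensor factor. One gets \(\rho_1(E_{ij})x_{kl}=-\delta_{ki}x_{jl}\), which equals \(\varrho_1(-E_{ji})x_{kl}=\varrho_1(\sigma(E_{ij}))x_{kl}\), and \(\rho_1(E_{ij})y_{kl}=-\delta_{ki}y_{jl}\) likewise. For \(F_{ij}\), the prefactor \((-1)^{|x|}=-1\) and \(S^{-1}(F_{ij})=-F_{ij}\) cancel. This gives \(\rho_1(F_{ij})x_{kl}=\sum_p\lra{y_{kp}}{F_{ij}}y_{pl}=\sqrt{-1}\delta_{ki}y_{jl}\) and \(\rho_1(F_{ij})y_{kl}=\sqrt{-1}\delta_{ki}x_{jl}\). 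These are exactly \(\varrho_1(\sqrt{-1}F_{ji})\) applied to \(x_{kl}\) and \(y_{kl}\). Having matched all generators, the derivation property extends the identities to all of \(\co(\cx(n))\).

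The main obstacle is sign bookkeeping. This includes the Koszul signs in the definitions of \(\rho_1,\rho_2\), and the sign of the antipode on odd primitives (still \(S(x)=-x\)). It also includes the parity convention for \(\lra{-}{-}\): the pairing is even, so an odd element pairs nontrivially only with odd functions. I would double-check \(S^{-1}(x)=-x\) directly from the Hopf pairing axioms and primitivity, \(\Delta(x)=x\otimes1+1\otimes x\). I would also confirm that the \(\sqrt{-1}\) factors from the identification \(F_{ij}\mapsto\sqrt{-1}F'_{ij}\) produce precisely the factor in \(\sigma(F_{ij})=\sqrt{-1}F_{ji}\).
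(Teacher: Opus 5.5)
Your proposal is correct and follows the paper's proof exactly. Both sides are superderivations of the polynomial superalgebra, so it suffices to compare them on the generators \(x_{kl},y_{kl}\); the paper leaves this as a direct computation, and your explicit values for \(\rho_1,\rho_2\) on those generators (using \(S^{-1}(x)=-x\) for primitive \(x\), the prefactor \((-1)^{|x|}\) in \(\rho_1\), the Koszul sign in \(\rho_2\), and the factor \(\sqrt{-1}\) from \(F_{ij}\mapsto\sqrt{-1}F'_{ij}\)) all check out.
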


\begin{proof}
Under the identification above, both sides are superderivations of the polynomial superalgebra \(\bc[x_{ij},y_{ij}\mid 1\leq i,j\leq n]\). Hence the identities reduce to a direct computation on the generators \(x_{ij},y_{ij}\).
\end{proof}

Combining Proposition~\ref{prop:iso} with the description of the singular vectors in \(\co(\Omega)\), we obtain the following consequence.

\begin{corollary}\label{cor:sing}
The subspace of highest weight vectors in \(S(\bc^{n^2|n^2})\) is given by
\begin{align*}
&\left\{f\in S\bigl(\bc^{n^2|n^2}\bigr)\;\middle|\; \varrho_1(x)f=\varrho_2(y)f=0,\forall x,y\in\fn^+\right\}\\
=&\Sing(\co(\Omega))\cap\co(\cx(n))=\Gamma^{-1}(1\otimes\co(H)\otimes 1)\cap\co(\cx(n)).
\end{align*}
\end{corollary}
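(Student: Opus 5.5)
The plan is to translate the highest weight conditions for \(\varrho_1,\varrho_2\) into singular vector conditions for \(\rho_1,\rho_2\) using Proposition~\ref{prop:iso}, and then invoke Theorem~\ref{thm:singular-alpha}. First I would note that \(\co(\cx(n))\subset\co(Q(n))\subset\co(\Omega)\). The regular actions on \(\co(\Omega)\) are the localized superderivations \(\lambda_\Omega\circ\rho_i\), so on \(\co(\cx(n))\) they restrict to the original \(\rho_i\). Hence, for \(f\in\co(\cx(n))\), the conditions defining \(\Sing(\co(\Omega))\) are exactly \(\rho_1(x)f=0\) for all \(x\in\fn^-\) and \(\rho_2(y)f=0\) for all \(y\in\fn^+\).

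The second step uses the automorphism \(\sigma\). By Proposition~\ref{prop:iso}, \(\rho_1(x)f=\varrho_1(\sigma(x))f\) and \(\rho_2(y)f=\varrho_2(y)f\). I would check directly that \(\sigma\) maps \(\fn^-\) onto \(\fn^+\): for \(i>j\) we have \(\sigma(E_{ij})=-E_{ji}\) and \(\sigma(F_{ij})=\sqrt{-1}F_{ji}\) with \(j<i\), and these images span \(\fn^+\). Since \(\sigma\) is a bijective linear map on \(\fq(n)\), this gives \(\sigma(\fn^-)=\fn^+\) exactly. Consequently \(f\) is annihilated by \(\rho_1(\fn^-)\) if and only if it is annihilated by \(\varrho_1(\fn^+)\), and \(f\) is annihilated by \(\rho_2(\fn^+)\) if and only if it is annihilated by \(\varrho_2(\fn^+)\). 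Together with the first step, this proves the first equality.

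The second equality is then immediate from Theorem~\ref{thm:singular-alpha}, intersected with \(\co(\cx(n))\).

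The only point I expect to need care is the identification of the actions on \(\co(\Omega)\) with those on the subalgebra \(\co(\cx(n))\). This is handled by the uniqueness of extensions of superderivations (Lemma~\ref{lem:der}), together with the fact that \(\rho_i\) preserve \(\co(\cx(n))\). That fact is stated just before Proposition~\ref{prop:iso}.
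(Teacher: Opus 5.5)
Your proposal is correct and follows the paper's argument: the paper proves the corollary in one line, by combining Proposition~\ref{prop:iso} with Theorem~\ref{thm:singular-alpha}. Your check that \(\sigma(\fn^-)=\fn^+\), and your observation that the localized actions restrict to the original \(\rho_i\) on \(\co(\cx(n))\subset\co(Q(n))\), spell out the details the paper leaves implicit.
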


For \(r\in\bn\), set \(d_r:=\det(x_{ij})_{1\leq i,j\leq r}\). Let \(\cd\subset\cpinf\) be the multiplicative subset generated by \(d_r\) for all \(r\in\bn\), and consider the localization \(\cd^{-1}\cpinf\). By Corollary~\ref{cor:der-localization}, the \(\fq(\infty)\)-actions \(\varrho_1,\varrho_2\) on \(\cpinf\) extend to \(\fq(\infty)\)-actions on \(\cd^{-1}\cpinf\), which we still denote by \(\varrho_1,\varrho_2\).

For each \(n\), the coordinate superalgebra of the big cell \(\Omega_n\),
\[
\co(\Omega_n)=\bc[x_{ij},y_{ij}\mid 1\leq i,j\leq n]_{d_1\cdots d_n},
\]
naturally embeds into \(\cd^{-1}\cpinf\). By Proposition~\ref{prop:iso} and the uniqueness of superderivation extensions, the regular \(\fq(n)\)-actions \(\rho_1,\rho_2\) on \(\co(\Omega_n)\) coincide with the restrictions of the extended \(\fq(\infty)\)-actions. More precisely, for \(x,y\in\fq(n)\),
\begin{equation}\label{eq:iso}
\rho_1(x)=\varrho_1(\sigma(x))|_{\co(\Omega_n)},\quad
\rho_2(y)=\varrho_2(y)|_{\co(\Omega_n)}.    
\end{equation}

For \(n\in\bn\) and \(1\leq k\leq n\), let
\[
\Gamma_n:\co(\Omega_n)\overset{\sim}{\to}\co(U_n^-)\otimes\co(H_n)\otimes\co(U_n^+)
\]
be the Gauss decomposition isomorphism for \(Q(n)\), and set
\[
a_k^{(n)}:=\Gamma_n^{-1}(1\otimes x_{kk}\otimes 1),\quad
b_k^{(n)}:=\Gamma_n^{-1}(1\otimes y_{kk}\otimes 1)
\]
as elements of \(\co(\Omega_n)\).

For \(m\leq n\), take the natural embeddings of superschemes
\[
\Omega_m\hookrightarrow\Omega_n,\quad
U_m^\pm\hookrightarrow U_n^\pm,\quad
H_m\hookrightarrow H_n
\]
compatible with the multiplication maps in the Gauss decompositions. The corresponding pullback \(\co(\Omega_n)\to\co(\Omega_m)\) sends \(a_k^{(n)},b_k^{(n)}\) to \(a_k^{(m)},b_k^{(m)}\) for \(1\leq k\leq m\). Hence, for each \(k\), the elements \(a_k^{(n)},b_k^{(n)}\) with \(n\geq k\) have common images in \(\cd^{-1}\cpinf\).

\begin{definition}\label{def:ab}
For \(k\in\bn\), we define \(a_k,b_k\in\cd^{-1}\cpinf\) to be the common images of \(a_k^{(n)},b_k^{(n)}\) for \(n\geq k\), respectively.
\end{definition}

We now use the elements \(a_k,b_k\in\cd^{-1}\cpinf\) defined above to describe the highest weight vectors in the Howe duality.

\begin{theorem}\label{thm:highest}
Let \(\lambda=(\lambda_1,\ldots,\lambda_\ell)\) be a strict partition. Then
\[
\left\{\prod_{k=1}^\ell a_k^{\lambda_k-\delta_k}b_k^{\delta_k}\;\middle|\; \delta_k\in\{0,1\}\ (1\leq k\leq\ell)\right\}
\]
forms a basis of \(H_\lambda\).
\end{theorem}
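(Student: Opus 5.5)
We work in the \((\fq(\ell),\fq(\ell))\)-case, which by Remark~\ref{rmk:reduction-square} suffices, and take \(n=\ell\).

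First, describe the singular subspace explicitly. The coordinate superalgebra \(\co(H_n)\) is \(\bc[x_{kk}^{\pm1},y_{kk}\mid 1\le k\le n]\), since \(a\) diagonal invertible forces each \(x_{kk}\) invertible. Because \(\Gamma_n\) is a superalgebra isomorphism, Theorem~\ref{thm:singular-alpha} then gives
\[
\Sing(\co(\Omega_n))=\Span\Bigl\{\prod_{k=1}^n a_k^{r_k}b_k^{\epsilon_k}\Bigm| r_k\in\bz,\ \epsilon_k\in\{0,1\}\Bigr\}.
\]
These monomials are linearly independent, since their preimages under \(\Gamma_n\) are. By Corollary~\ref{cor:sing} and \eqref{eq:iso}, and noting that \(\sigma\) interchanges \(\fn^+\) and \(\fn^-\) up to scalars, the highest weight vectors in \(S(\bc^{n^2|n^2})\) are exactly the elements of this span that lie in \(\co(\cx(n))\).

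Second, compute weights. By Proposition~\ref{prop:diag-rho}, the \(\fh_{\bar0}\)-actions on \(\Gamma_n^{-1}(1\otimes g\otimes 1)\) are the regular actions on \(\co(H)\). In \(\co(H)\), both \(x_{kk}\) and \(y_{kk}\) have weight \(\pm\varepsilon_k\) for the left action and \(\varepsilon_k\) for the right action. The sign is fixed by \(\sigma(E_{kk})=-E_{kk}\), so under \(\varrho_1,\varrho_2\) both weights are \(\varepsilon_k\). Hence \(\prod a_k^{r_k}b_k^{\epsilon_k}\) has weight \((\mu,\mu)\) with \(\mu_k=r_k+\epsilon_k\). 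Since distinct monomials of a fixed weight differ only in the \(\epsilon_k\), the highest weight vectors of weight \((\lambda,\lambda)\) in \(\co(\Omega_n)\) are spanned by the \(2^\ell\) monomials \(\prod a_k^{\lambda_k-\delta_k}b_k^{\delta_k}\), which are linearly independent.

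Third, and this is the main obstacle, show that each such monomial is a polynomial, that is, lies in \(\co(\cx(n))\). Here I would use the explicit Schur-complement formula for \(a_k,b_k\) from Theorem A, which follows from the block \(LDU\) factorization over the \(2\times2\) blocks \(Z_{ij}\).

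Once polynomiality is established, the monomials lie in the weight-\((\lambda,\lambda)\) highest weight space of \(S(\bc^{n^2|n^2})\). By Theorem~\ref{thm:howe-duality}, this space is \(H_\lambda\), of dimension \(2^\ell\), so the \(2^\ell\) independent monomials form a basis. As a cross-check, the monomials with \(\lambda_k=\delta_k\) for some \(k\) are excluded precisely because \(\lambda\) is strict, which gives \(\lambda_\ell\ge1\).

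The fallback, if direct polynomiality is hard, is a dimension argument. The span \(V\) of the \(2^\ell\) monomials contains \(H_\lambda\), since every highest weight vector of weight \((\lambda,\lambda)\) is singular in \(\co(\Omega_n)\). Because \(\dim V=2^\ell=\dim H_\lambda\), we get \(H_\lambda=V\). This also proves polynomiality. It uses only Remark~\ref{rmk:reduction-square}'s count \(\dim H_\lambda=2^\ell\), and it avoids explicit computation entirely, so this is the route I would actually take.
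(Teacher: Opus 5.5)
Your fallback route is essentially the paper's proof. Theorem~\ref{thm:singular-alpha} makes the \(2^\ell\) monomials a basis of the weight-\((\lambda,\lambda)\) part of \(\Sing(\co(\Omega_\ell))\), Corollary~\ref{cor:sing} places \(H_\lambda\) inside it, and \(\dim H_\lambda=2^\ell\) forces equality. Polynomiality then comes out as a consequence (the paper's Corollary~\ref{cor:polynomiality}), so the Schur-complement step is unnecessary.

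Your choice \(n=\ell\) is actually needed, not cosmetic. For \(n>\ell\), the singular elements \(a_k^{-1}b_k\) with \(k>\ell\) have weight zero, which makes that weight space \(2^n\)-dimensional; the paper's ``choose \(n\geq\ell\)'' glosses over this.

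Your closing cross-check is misstated. Monomials with \(\lambda_k=\delta_k=1\) do occur, e.g.\ \(a_1^2b_2\) for \(\lambda=(2,1)\). What \(\lambda_\ell\geq 1\) actually guarantees is that no negative power of \(a_k\) appears.
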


\begin{proof}
Choose \(n\geq\ell\). Theorem~\ref{thm:singular-alpha}, applied to \(\Omega_n\), shows that the \(\fn^-\oplus\fn^+\)-singular subspace of \(\co(\Omega_n)\) of weight \((-\lambda,\lambda)\) has the set in the statement as a basis. This set has \(2^\ell\) elements, which equals \(\dim H_\lambda\). The result follows from Corollary~\ref{cor:sing}.
\end{proof}

The following formula gives an explicit expression for \(a_k,b_k\in\cd^{-1}\cpinf\).

\begin{proposition}\label{prop:formula}
For \(i,j,k\in\bn\), define matrices \(C_k,Z_{ij}\in M_2(\cd^{-1}\cpinf)\) by
\[
C_k:=
\begin{pmatrix}
a_k&b_k\\
b_k&a_k
\end{pmatrix},
\quad
Z_{ij}:=
\begin{pmatrix}
x_{ij}&y_{ij}\\
y_{ij}&x_{ij}
\end{pmatrix}.
\]
Then, for \(k\in\bn\), the elements \(a_k,b_k\in\cd^{-1}\cpinf\) are determined by
\begin{equation}\label{eq:akbk}
C_k=Z_{kk}-(Z_{k1},\ldots,Z_{k,k-1})(Z_{ij})_{1\leq i,j\leq k-1}^{-1}(Z_{1k},\ldots,Z_{k-1,k})^t.
\end{equation}
\end{proposition}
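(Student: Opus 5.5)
The plan is to compute the Gauss decomposition of a point of \(\Omega_n\) explicitly by block elimination, working with \(2\times 2\) blocks. For a commutative superalgebra \(A\), encode a point \(\begin{pmatrix}a&b\\b&a\end{pmatrix}\in\cx(n)(A)\) as the \(n\times n\) block matrix \(Z=(Z_{ij})\) with \(Z_{ij}=\begin{pmatrix}a_{ij}&b_{ij}\\b_{ij}&a_{ij}\end{pmatrix}\). This is the conjugate of the original supermatrix by the permutation interleaving even and odd basis vectors. Blocks of the form \(\begin{pmatrix}p&q\\q&p\end{pmatrix}\) form a subalgebra \(\mathcal{Z}(A)\cong A[\theta]/(\theta^2-1)\) of \(M_2(A)\), under \(\theta\mapsto\begin{pmatrix}0&1\\1&0\end{pmatrix}\), and the multiplication in \(Q(n)\) becomes ordinary block matrix multiplication over \(\mathcal{Z}(A)\). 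Take care, however, that \(\mathcal{Z}(A)\) is not commutative: \(q\) is odd and \(\theta\) does not commute with odd scalars in the naive sense. So I will only use associativity of block multiplication, never commutativity.

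Under this encoding, \(\gamma(u^-,h,u^+)=u^-hu^+\) with \(u^-\) block lower unitriangular, \(h=\diag(C_1,\dots,C_n)\) block diagonal, and \(u^+\) block upper unitriangular. By definition, \(C_k(Z)\) evaluated at the universal point of \(\Omega_n\) (so with \(A=\co(\Omega_n)\), \(a_{ij}=x_{ij}\), \(b_{ij}=y_{ij}\)) has entries \(\Gamma_n^{-1}(1\otimes x_{kk}\otimes1)=a_k\) and \(\Gamma_n^{-1}(1\otimes y_{kk}\otimes 1)=b_k\). This is just the statement that \(a_k,b_k\) are the coordinates \(x_{kk},y_{kk}\) of the \(H\)-component. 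It therefore suffices to show that the \(k\)-th diagonal block of \(h\) in \(Z=u^-hu^+\) equals the block Schur complement in \eqref{eq:akbk}. Compatibility with the embeddings \(\Omega_k\hookrightarrow\Omega_n\) (already recorded before Definition~\ref{def:ab}) lets me take \(n=k\). Write
\[
Z=\begin{pmatrix}P&Q\\R&S\end{pmatrix},\qquad P=(Z_{ij})_{1\le i,j\le k-1},\quad Q=(Z_{ik})_{i<k},\quad R=(Z_{kj})_{j<k},\quad S=Z_{kk}.
\]

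The key steps are as follows. First, I need \(P\) to be invertible in \(M_{k-1}(\mathcal{Z}(A))\). The even reduction of \(P\) is \((x_{ij})_{i,j<k}\) up to nilpotents, with determinant \(d_{k-1}\), a unit in \(\co(\Omega_k)\). A matrix over a superalgebra whose even part modulo odd nilpotents is invertible is itself invertible, so \(P^{-1}\) exists and again lies in the queer block form, since \(Q(k-1)\) is a group. Second, I use the factorization
\[
Z=\begin{pmatrix}1&0\\RP^{-1}&1\end{pmatrix}\begin{pmatrix}P&0\\0&S-RP^{-1}Q\end{pmatrix}\begin{pmatrix}1&P^{-1}Q\\0&1\end{pmatrix},
\]
which is verified purely by associativity. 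Third, I apply the Gauss decomposition of \(Q(k-1)\) to \(P=u'^-h'u'^+\), which is legitimate because \(P\) lies in \(\Omega_{k-1}(A)\). Substituting gives a factorization of \(Z\) as lower unitriangular times \(\diag(h',S-RP^{-1}Q)\) times upper unitriangular. All factors have queer block shape, and the off-diagonal blocks satisfy the required strict triangularity, since the diagonal blocks of the unitriangular factors are \(1\). By uniqueness of the decomposition (Theorem~\ref{thm:gauss}: \(\gamma\) is an isomorphism, hence injective on \(A\)-points), the \(k\)-th diagonal block \(C_k\) equals \(S-RP^{-1}Q\), which is \eqref{eq:akbk}.

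The main obstacle is bookkeeping. One must check that the interleaving encoding really turns queer supermatrix multiplication into block multiplication with the displayed \(Z_{ij}\), with no sign twists. This holds because the entries of each \(2\times2\) block multiply as in \(M_2(A)\), and the supermatrix product \(\begin{pmatrix}a&b\\b&a\end{pmatrix}\begin{pmatrix}a'&b'\\b'&a'\end{pmatrix}\) has entries \(aa'+bb'\) and \(ab'+ba'\), matching the \(2\times2\) product. One must also confirm that the \(H\)-component coordinates correspond to \((x_{kk},y_{kk})\) in this ordering. The invertibility step should be routine.
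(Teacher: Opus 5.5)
Your argument is correct, but it runs in the opposite direction from the paper's.

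\textbf{The paper's route.} It takes the Gauss factorization $(Z_{ij})_{1\le i,j\le n}=L\,\diag(C_1,\ldots,C_n)\,R$ of the universal point of $\Omega_n$, for an arbitrary $n\ge k$. It then uses that the Schur complement of the leading $(k-1)\times(k-1)$ block inside the leading $k\times k$ block is unchanged by left multiplication by lower unipotent and right multiplication by upper unipotent block matrices. For $\diag(C_1,\ldots,C_k)$ this Schur complement is visibly $C_k$.

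\textbf{Your route.} You build a second factorization of $Z$ whose last diagonal block is $S-RP^{-1}Q$, by block elimination followed by the Gauss decomposition of $P\in\Omega_{k-1}(A)$. You then conclude by uniqueness, i.e.\ injectivity of $\gamma$ on $A$-points.

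\textbf{What each buys.} The paper's route needs only the existence of the factorization plus a formal identity, and no recursion to $Q(k-1)$. Because it works in $\co(\Omega_n)$ for every $n\ge k$, it also shows as a by-product that $a_k^{(n)},b_k^{(n)}$ do not depend on $n$. You instead import that compatibility to reduce to $n=k$. You could avoid this by running your argument on the leading $k\times k$ block of the universal point of $\Omega_n$. That block is a point of $\Omega_k(\co(\Omega_n))$, and its Gauss factorization is the truncation of that of $Z$, since the outer factors are triangular. In exchange, your route is self-contained, with no appeal to the cited Schur complement invariance. It also makes explicit two things the paper leaves implicit: the interleaving bookkeeping, and the invertibility of $P$ in queer block form.

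\textbf{One check you skipped.} To invoke injectivity of $\gamma$ you need $\diag(h',S-RP^{-1}Q)\in H(A)$, so the even entry of $S-RP^{-1}Q$ must be a unit. This holds because $Z$ and both unitriangular factors are invertible. Hence the block-diagonal middle factor is invertible, and so is its last block. Alternatively, you can bypass $H(A)$ entirely: from $L_1D_1R_1=L_2D_2R_2$, the matrix $L_2^{-1}L_1D_1=D_2R_2R_1^{-1}$ is both block lower and block upper triangular, and comparing diagonal blocks gives $D_1=D_2$.
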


\begin{proof}
Choose \(n\geq k\) and work in \(\co(\Omega_n)\). By the definition of the Gauss decomposition isomorphism \(\Gamma_n\), the block matrix \((Z_{ij})_{1\leq i,j\leq n}\in M_n(M_2(\co(\Omega_n)))\) admits the factorization
\[
(Z_{ij})_{1\leq i,j\leq n}=L\diag(C_1^,\ldots,C_n)R,
\]
where \(L\) and \(R\) are lower and upper unipotent block matrices, respectively.

The Schur complement of the leading \((k-1)\times(k-1)\) block in the leading \(k\times k\) block matrix is invariant under left multiplication by lower unipotent matrices and right multiplication by upper unipotent matrices; see \cite[Theorem~1.3]{MR2160825}. Therefore, this factorization gives
\[
C_k=Z_{kk}-
(Z_{k1},\ldots,Z_{k,k-1})
(Z_{ij})_{1\leq i,j\leq k-1}^{-1}
(Z_{1k},\ldots,Z_{k-1,k})^t.
\]
Taking the image in \(\cd^{-1}\cpinf\) gives the desired identity.
\end{proof}

\begin{remark}
Formula~\eqref{eq:akbk} may also be used as a direct definition of elements \(a_k,b_k\in\cd^{-1}\cpinf\) for \(k\in\bn\). With this definition, Proposition~\ref{prop:formula} states that these elements coincide with the diagonal factors obtained from the Gauss decompositions of queer supergroups.
\end{remark}

\begin{corollary}\label{cor:polynomiality}
Let \(a_k,b_k\in\cd^{-1}\cpinf\) be defined by Formula~\eqref{eq:akbk}. If \(\lambda=(\lambda_1,\ldots,\lambda_\ell)\) is a strict partition, then, for any \((\delta_1,\ldots,\delta_\ell)\in\{0,1\}^\ell\),
\[
\prod_{k=1}^{\ell}a_k^{\lambda_k-\delta_k}b_k^{\delta_k}\in\cpinf.
\]
\end{corollary}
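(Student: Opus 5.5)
The plan is to deduce this from Theorem~\ref{thm:highest} together with Proposition~\ref{prop:formula}, with essentially no computation. By the Remark following Proposition~\ref{prop:formula}, the elements \(a_k,b_k\) defined by Formula~\eqref{eq:akbk} agree with those of Definition~\ref{def:ab}, that is, with the images of the diagonal Gauss coordinates \(\Gamma_n^{-1}(1\otimes x_{kk}\otimes 1)\) and \(\Gamma_n^{-1}(1\otimes y_{kk}\otimes 1)\) for any \(n\geq k\). Hence, for a strict partition \(\lambda\) of length \(\ell\) and any \(\delta\in\{0,1\}^\ell\), the product \(\prod_{k}a_k^{\lambda_k-\delta_k}b_k^{\delta_k}\) is exactly one of the vectors listed in Theorem~\ref{thm:highest}.

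The key step is then to use that Theorem~\ref{thm:highest} asserts these products form a basis of \(H_\lambda\). By Definition~\ref{def:Hlambda}, \(H_\lambda=H_\lambda^{\ell,\ell}\) is a subspace of \(S(\bc^{\ell^2|\ell^2})=\bc[x_{ij},y_{ij}\mid 1\leq i,j\leq \ell]\subset\cpinf\). So every element of \(H_\lambda\), and in particular each product in the statement, lies in \(\cpinf\), viewed inside \(\cd^{-1}\cpinf\) via the natural injection. Injectivity holds because \(\cpinf\) has no even zero divisors among the \(d_r\): each \(d_r\) is a nonzero polynomial in the even variables, and multiplication by it is injective on the free module over the even polynomial ring.

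The only point needing care is the identification inside Theorem~\ref{thm:highest}: its proof takes the singular vectors in \(\co(\Omega_n)\) of the appropriate weight and intersects them with \(\co(\cx(n))\) via Corollary~\ref{cor:sing}. One must be sure that the statement really places the products inside \(\co(\cx(n))\), rather than only asserting that \(H_\lambda\) is spanned by elements of \(\co(\Omega_n)\).

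I will make this explicit. By Corollary~\ref{cor:sing}, \(H_\lambda\) equals the \((\lambda,\lambda)\)-weight part of \(\Gamma^{-1}(1\otimes\co(H)\otimes1)\cap\co(\cx(n))\). This intersection has dimension \(2^\ell\) by Remark~\ref{rmk:reduction-square}. Meanwhile, the full weight space of \(\Gamma^{-1}(1\otimes\co(H)\otimes 1)\) has the \(2^\ell\) listed products as a basis, as noted in the proof of Theorem~\ref{thm:highest}. Note that \(\co(H)=\bc[x_{kk}^{\pm1},y_{kk}]\) and \(y_{kk}^2=0\), and the weight condition forces exponents \(\lambda_k-\delta_k\) on \(a_k\) and \(\delta_k\) on \(b_k\). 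A subspace of equal finite dimension coincides with the whole space, so every listed product lies in \(\co(\cx(n))\subset\cpinf\).

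I expect this dimension-equality step to be the main obstacle, since it relies on \(\dim H_\lambda=2^\ell\) from the Howe duality (Theorem~\ref{thm:howe-duality}) rather than on any direct polynomiality of the Schur complement.
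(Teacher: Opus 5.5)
Your proposal is correct and follows the paper's proof, which simply notes that by Theorem~\ref{thm:highest} these products form a basis of \(H_\lambda\subset\cpinf\); your dimension-count paragraph just re-derives the proof of Theorem~\ref{thm:highest}. One precision in that re-derivation: take \(n=\ell\). For \(n>\ell\), the relevant weight space of \(\Gamma_n^{-1}(1\otimes\co(H_n)\otimes 1)\) also contains products involving the weight-zero factors \(a_k^{-1}b_k\) with \(\ell<k\leq n\), so it is no longer \(2^\ell\)-dimensional and the equal-dimension step would fail.
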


\begin{proof}
By Theorem~\ref{thm:highest}, these elements form a basis of \(H_\lambda\). Since \(H_\lambda\) is a subspace of \(\cpinf\), the assertion follows.
\end{proof}

\begin{example}
As an example, we explicitly compute a basis of \(H_\lambda\) for \(n=2\) and \(\lambda=(2,1)\). The basis is
\[
a_1^2a_2,\quad a_1^2b_2,\quad a_1b_1a_2,\quad a_1b_1b_2.
\]
We first compute \(a_r,b_r\) for \(r=1,2\).

For \(r=1\), we have
\[
C_1=\begin{pmatrix}
x_{11}&y_{11}\\y_{11}&x_{11}
\end{pmatrix}.
\]
Hence, \(a_1=x_{11}\) and \(b_1=y_{11}\).

For \(r=2\), we have
\begin{align*}
C_2&=\begin{pmatrix}
x_{22}&y_{22}\\y_{22}&x_{22}
\end{pmatrix}-
\begin{pmatrix}
x_{21}&y_{21}\\y_{21}&x_{21}
\end{pmatrix}
\begin{pmatrix}
x_{11}&y_{11}\\y_{11}&x_{11}
\end{pmatrix}^{-1}
\begin{pmatrix}
x_{12}&y_{12}\\y_{12}&x_{12}
\end{pmatrix}
\\
&=\begin{pmatrix}
x_{22}&y_{22}\\y_{22}&x_{22}
\end{pmatrix}-
\begin{pmatrix}
x_{21}&y_{21}\\y_{21}&x_{21}
\end{pmatrix}
\begin{pmatrix}
x_{11}^{-1}&-x_{11}^{-2}y_{11}\\-x_{11}^{-2}y_{11}&x_{11}^{-1}
\end{pmatrix}
\begin{pmatrix}
x_{12}&y_{12}\\y_{12}&x_{12}
\end{pmatrix}.
\end{align*}
It follows that
\begin{align*}
a_2&=x_{22}-x_{21}x_{11}^{-1}x_{12}+x_{21}x_{11}^{-2}y_{11}y_{12}+y_{21}x_{11}^{-2}y_{11}x_{12}-y_{21}x_{11}^{-1}y_{12}\\
&=x_{11}^{-2}(x_{11}^2x_{22}-x_{11}x_{12}x_{21}+x_{21}y_{11}y_{12}-x_{12}y_{11}y_{21}+x_{11}y_{12}y_{21}),
\end{align*}
and
\begin{align*}
b_2&=y_{22}-x_{21}x_{11}^{-1}y_{12}+x_{21}x_{11}^{-2}y_{11}x_{12}+y_{21}x_{11}^{-2}y_{11}y_{12}-y_{21}x_{11}^{-1}x_{12}\\
&=x_{11}^{-2}(x_{11}^2y_{22}-x_{11}x_{21}y_{12}+x_{12}x_{21}y_{11}+y_{11}y_{12}y_{21}-x_{11}x_{12}y_{21}).
\end{align*}

Therefore, we obtain the following four highest weight vectors:
\begin{align*}
a_1^2a_2&=x_{11}^2x_{22}-x_{11}x_{12}x_{21}+x_{21}y_{11}y_{12}-x_{12}y_{11}y_{21}+x_{11}y_{12}y_{21},\\
a_1^2b_2&=x_{11}^2y_{22}-x_{11}x_{21}y_{12}+x_{12}x_{21}y_{11}+y_{11}y_{12}y_{21}-x_{11}x_{12}y_{21},\\
a_1b_1a_2&=x_{11}x_{22}y_{11}-x_{12}x_{21}y_{11}+y_{11}y_{12}y_{21},\\
a_1b_1b_2&=x_{11}y_{11}y_{22}-x_{21}y_{11}y_{12}-x_{12}y_{11}y_{21}.
\end{align*}
\end{example}

\subsection{Properties of highest weight vectors}

Among the basis vectors in Theorem~\ref{thm:highest}, the one with only \(a_k\)-factors has a distinguished role. We consider \(H_\lambda\) as an irreducible \(\Cl_{2\ell}\)-module via the module structure defined in Proposition~\ref{prop:cli}.

\begin{proposition}\label{prop:hpure}
The vector
\[
v_\lambda:=\prod_{k=1}^\ell a_k^{\lambda_k}
\]
is a pure spinor in \(H_\lambda\) with respect to \(c_1^-,\ldots,c_\ell^-\).
\end{proposition}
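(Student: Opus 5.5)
The plan is to verify directly that \(\Theta(c_i^-)v_\lambda=0\) for \(1\leq i\leq\ell\), moving everything to the diagonal factor \(\co(H)\) of the Gauss decomposition. By Theorem~\ref{thm:highest}, \(v_\lambda\in H_\lambda\), so \(\Theta(c_i^-)v_\lambda\) is defined. Since \(\lambda_i\neq 0\), it suffices to show
\[
(\varrho_1(F_{ii})+\sqrt{-1}\,\varrho_2(F_{ii}))v_\lambda=0 .
\]

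First I translate to the regular actions using \eqref{eq:iso}. We have \(\sigma(F_{ii})=\sqrt{-1}F_{ii}\), hence \(\rho_1(F_{ii})=\sqrt{-1}\,\varrho_1(F_{ii})\), while \(\rho_2(F_{ii})=\varrho_2(F_{ii})\). The operator above therefore equals
\[
-\sqrt{-1}\bigl(\rho_1(F_{ii})-\rho_2(F_{ii})\bigr)
\]
on \(\co(\Omega_n)\), for any fixed \(n\geq\ell\). By definition, \(v_\lambda=\Gamma_n^{-1}(1\otimes g\otimes 1)\) with \(g=\prod_{k}x_{kk}^{\lambda_k}\in\co(H_n)\). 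Proposition~\ref{prop:diag-rho} then reduces the claim to the identity
\[
\vartheta_H^-(F_{ii})g=\vartheta_H^+(F_{ii})g
\]
in \(\co(H_n)\).

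Next I check this identity on generators. Under the identification, \(F_{ii}\) corresponds to \(\sqrt{-1}F'_{ii}\); the common scalar is irrelevant, so I work with \(F'_{ii}\). The element \(F'_{ii}\) is primitive, so \(S^{-1}(F'_{ii})=-F'_{ii}\). In \(\co(H_n)\) we have
\[
\Delta(x_{kk})=x_{kk}\otimes x_{kk}+y_{kk}\otimes y_{kk},
\]
together with \(\lra{x_{kk}}{F'_{ii}}=0\) and \(\lra{y_{kk}}{F'_{ii}}=\delta_{ik}\). This gives
\[
\vartheta_H^+(F'_{ii})x_{kk}=y_{kk}\lra{y_{kk}}{F'_{ii}}=\delta_{ik}y_{kk},
\qquad
\vartheta_H^-(F'_{ii})x_{kk}=(-1)(-1)\lra{y_{kk}}{F'_{ii}}y_{kk}=\delta_{ik}y_{kk}.
\]
So the two operators agree on every \(x_{kk}\).

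Finally, both \(\vartheta_H^\pm(F'_{ii})\) are odd superderivations of \(\co(H_n)\), by the argument of Proposition~\ref{prop:der} or Remark~\ref{rmk:der}. Two superderivations that agree on the even generators \(x_{kk}\) agree on the subalgebra \(\bc[x_{kk}]\), which contains \(g\). Note that they need not agree on the \(y_{kk}\), which is why only the pure product \(\prod a_k^{\lambda_k}\) is annihilated. Hence \(\Theta(c_i^-)v_\lambda=0\) for all \(i\), and since \(v_\lambda\neq 0\), it is a pure spinor.

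The only delicate step is the sign bookkeeping: the factor \((-1)^{|x|}\) in \(\vartheta_H^-\), the antipode sign, and the \(\sqrt{-1}\) coming from \(\sigma\) must combine so that the difference \(\rho_1-\rho_2\), and not the sum, appears. I would double-check this against the explicit example \(\lambda=(2,1)\).
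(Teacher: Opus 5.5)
Your proof is correct and follows the paper's argument. You use \eqref{eq:iso} to rewrite the operator in terms of $\rho_1(F_{ii})$ and $\rho_2(F_{ii})$, apply Proposition~\ref{prop:diag-rho} to reduce to $\vartheta_H^-(F_{ii})x_{kk}=\vartheta_H^+(F_{ii})x_{kk}=\sqrt{-1}\,\delta_{ik}y_{kk}$ in $\co(H)$, and finish with the super Leibniz rule. The only difference is where the Leibniz rule is applied: you use it for $\vartheta_H^\pm(F_{ii})$ on $g=\prod_k x_{kk}^{\lambda_k}$ in $\co(H_n)$, while the paper uses it for $\varrho_1(F_{ii})+\sqrt{-1}\,\varrho_2(F_{ii})$ on the factors $a_k$ in $\co(\Omega_n)$; this reordering is immaterial, and your signs (antipode, $(-1)^{|z|}$, and $\sigma$) all check out.
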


\begin{proof}
Choose \(n\geq\ell\) and work in \(\co(\Omega_n)\). We verify directly that \(c_i^-v_\lambda=0\) for all \(1\leq i\leq\ell\), that is,
\[
(\varrho_1(F_{ii})+\sqrt{-1}\varrho_2(F_{ii}))v_\lambda=0\quad(1\leq i\leq \ell).
\]

We begin by determining the actions on each factor \(a_k\), \(1\leq k\leq\ell\). Since \(\Gamma_n(a_k)=1\otimes x_{kk}\otimes 1\), for \(1\leq i\leq\ell\), Proposition~\ref{prop:diag-rho} gives
\begin{align*}
\rho_1(F_{ii})a_k&=\Gamma_n^{-1}(1\otimes\vartheta_H^-(F_{ii})x_{kk}\otimes 1),\\
\rho_2(F_{ii})a_k&=\Gamma_n^{-1}(1\otimes\vartheta_H^+(F_{ii})x_{kk}\otimes 1).
\end{align*}
On \(\co(H)\), we have
\[
\vartheta_H^-(F_{ii})x_{kk}
=\vartheta_H^+(F_{ii})x_{kk}
=\sqrt{-1}\delta_{ik}y_{kk}.
\]
Hence,
\[
\rho_1(F_{ii})a_k=\rho_2(F_{ii})a_k=\sqrt{-1}\delta_{ik}b_k.
\]
By \eqref{eq:iso}, we have \(\rho_1(F_{ii})=\sqrt{-1}\varrho_1(F_{ii})\), \(\rho_2(F_{ii})=\varrho_2(F_{ii})\) on \(\co(\Omega_n)\). Hence
\[
\varrho_1(F_{ii})a_k=\delta_{ik}b_k,\quad\varrho_2(F_{ii})a_k=\sqrt{-1}\delta_{ik}b_k.
\]

Thus, for each \(1\leq i\leq\ell\), the operator \(\varrho_1(F_{ii})+\sqrt{-1}\varrho_2(F_{ii})\) annihilates every factor \(a_k\). Since this operator is a superderivation, the super Leibniz rule gives
\[
(\varrho_1(F_{ii})+\sqrt{-1}\varrho_2(F_{ii}))\biggl(\prod_{k=1}^\ell a_k^{\lambda_k}\biggr)=0
\quad (1\leq i\leq\ell).\qedhere
\]
\end{proof}

\begin{remark}
The basis \eqref{eq:basis} generated from the pure spinor \(v_\lambda\) by \(c_1^+,\ldots,c_\ell^+\) is precisely the basis in Theorem~\ref{thm:highest}. Indeed, by Proposition~\ref{prop:cli} and the computation in the proof above, we have
\[
c_i^+a_k
=\frac{1}{2}\bigl(\varrho_1(F_{ii})-\sqrt{-1}\varrho_2(F_{ii})\bigr)a_k
=\delta_{ik}b_k\quad(1\leq i,k\leq \ell).
\]
Since \(c_i^+\) acts by a superderivation, the super Leibniz rule gives that, for
\[
I=\{i_1<\cdots<i_r\}\subseteq[\ell],
\]
the vector \(c_{i_1}^+\cdots c_{i_r}^+v_\lambda\) is a nonzero scalar multiple of
\[
\prod_{k=1}^{\ell}a_k^{\lambda_k-\delta_k}b_k^{\delta_k},\quad \delta_k=
\begin{cases}
1,& k\in I,\\
0,& k\notin I.
\end{cases}
\]
\end{remark}

It is well known that the highest weight vectors in the \((\fgl(m),\fgl(n))\)-Howe duality are products of leading minors, and hence invariant under the transpose involution \(x_{ij}\mapsto x_{ji}\). In the queer setting, the following proposition establishes an analogous invariance property for \(a_k\) and \(b_k\).

Let \(\omega\) be the anti-involution of \(\cd^{-1}\cpinf\) determined by
\[
\omega(x_{ij})=x_{ji},\quad \omega(y_{ij})=y_{ji}\quad (i,j\in\bn).
\]

\begin{proposition}\label{prop:omega}
For \(k\in\bn\), one has
\[
\omega(a_k)=a_k,\quad\omega(b_k)=b_k.
\]
\end{proposition}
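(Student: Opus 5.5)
The plan is to apply $\omega$ to the block Gauss factorization
\[
(Z_{ij})_{1\le i,j\le n}=L\,\diag(C_1,\ldots,C_n)\,R
\]
from the proof of Proposition~\ref{prop:formula}, and then use uniqueness of such factorizations.

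First I fix the sign convention, guided by the example for $\lambda=(2,1)$. There, $\omega$ fixes $a_2$ only if it reverses products without a Koszul sign, i.e.\ $\omega(fg)=\omega(g)\omega(f)$. For instance $\omega(x_{21}y_{11}y_{12})=x_{12}y_{21}y_{11}=-x_{12}y_{11}y_{21}$, which matches the corresponding terms of $a_2$. So I treat $\omega$ as a genuine anti-automorphism of the (noncommutative-looking) algebra $\cd^{-1}\cpinf$. It is well defined on $\cd^{-1}\cpinf$ because it fixes each $d_r$: the $x$'s commute, so this reduces to $\det$ of the transpose.

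Next, for block matrices $M=(M_{ij})$ whose $2\times 2$ blocks have the queer form $\begin{pmatrix}p&q\\q&p\end{pmatrix}$, I define $M^\omega$ as the block matrix with $(i,j)$-block $\omega(M_{ji})$, where $\omega$ is applied entrywise. The claim is that
\[
(MN)^\omega=N^\omega M^\omega .
\]
It suffices to check that $\omega(PQ)=\omega(Q)\omega(P)$ for queer $2\times2$ blocks $P,Q$ and that sums behave well. For an anti-automorphism $\omega$ applied entrywise to $2\times 2$ matrices one has $\omega(PQ)=\bigl(\omega(Q)^t\omega(P)^t\bigr)^t$. Queer blocks are symmetric and closed under products, so the transposes disappear and the block-index reversal takes care of the outer transposition.

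With this in hand, note that $Z^\omega=Z$ because $\omega(Z_{ji})=Z_{ij}$. On the other hand,
\[
Z^\omega=R^\omega\,\diag\bigl(\omega(C_1),\ldots,\omega(C_n)\bigr)\,L^\omega ,
\]
where $R^\omega$ is lower unipotent and $L^\omega$ is upper unipotent, since $\omega(1)=1$ and $\omega(0)=0$ on the diagonal and zero blocks. Each $\omega(C_k)$ is again of queer form, with entries $\omega(a_k)$ and $\omega(b_k)$.

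Finally I conclude via uniqueness. The Schur complement argument in the proof of Proposition~\ref{prop:formula} shows that the diagonal blocks of any such factorization are forced to be the Schur complements of the leading blocks. The invariance result \cite[Theorem~1.3]{MR2160825} only uses block matrix algebra, which is valid here because everything lives in $M_n(M_2(\cd^{-1}\cpinf))$, and invertibility of the leading blocks follows from $d_r\in\cd$. Hence $\omega(C_k)=C_k$, i.e.\ $\omega(a_k)=a_k$ and $\omega(b_k)=b_k$.

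The main obstacle I expect is the sign bookkeeping in the anti-multiplicativity step. One must confirm that the entrywise anti-automorphism, combined with block transposition, really reverses products of queer block matrices. This requires that the entries of the factors need not supercommute in the naive order, since odd entries pick up signs. I would verify it directly on a product of two general queer blocks $\begin{pmatrix}p&q\\q&p\end{pmatrix}\begin{pmatrix}r&s\\s&r\end{pmatrix}$, checking the identity $\omega(pr+qs)=\omega(r)\omega(p)+\omega(s)\omega(q)$ and its analogue for the off-diagonal entry. I would also double-check the convention against the explicit $b_2$ in the example.
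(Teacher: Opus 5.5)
Your proof is correct. It rests on the same key identity as the paper's proof: entrywise $\omega$ combined with (block) transposition reverses products, and the block matrix $(Z_{ij})$ is fixed by this operation.

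The two arguments differ in where that identity is applied. The paper applies it directly to the closed formula \eqref{eq:akbk}, using $\omega(M_1M_2)^t=\omega(M_2)^t\omega(M_1)^t$ for ordinary scalar matrices. It then computes $\omega(C_k)$ term by term, implicitly using that the operation commutes with inverting the leading block. You instead apply it to the factorization $Z=L\,\diag(C_1,\ldots,C_n)\,R$ and conclude by uniqueness of the diagonal factor. That uniqueness is just the Schur-complement invariance from Proposition~\ref{prop:formula}, used a second time.

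Each route has a cost. Yours avoids the inverse bookkeeping. However, because you use block transposition rather than full transposition, it needs every factor ($L$, $R$, and the intermediate products) to have queer-form blocks. This holds because $L$ and $R$ come from $U^\mp$. The paper's route is shorter once \eqref{eq:akbk} is available. It works for arbitrary matrices, needing only $Z_{ij}^t=Z_{ij}$ and the symmetry of $\omega(C_k)$ at the end.

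Your discussion of the sign convention is a useful clarification that the paper leaves implicit. The identity $\omega(M_1M_2)^t=\omega(M_2)^t\omega(M_1)^t$ holds only if $\omega$ reverses products without Koszul signs, exactly as you deduced from the $\lambda=(2,1)$ example. With the super-anti-automorphism convention, $\omega$ would just be an automorphism, and $a_2$ would not be fixed.
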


\begin{proof}
Extending \(\omega\) entrywise to matrices, for any two matrices \(M_1\) and \(M_2\) with compatible sizes, we have
\[
\omega(M_1M_2)^t=\omega(M_2)^t\omega(M_1)^t.
\]
By Proposition~\ref{prop:formula}, we have
\begin{align*}
&\omega(C_k)=\omega(Z_{kk})-\omega((Z_{k1},\ldots,Z_{k,k-1})
(Z_{ij})_{1\leq i,j\leq k-1}^{-1}
(Z_{1k},\ldots,Z_{k-1,k})^t)\\
=&Z_{kk}-(\omega((Z_{1k},\ldots,Z_{k-1,k}))\omega((Z_{ji})_{1\leq i,j\leq k-1}^{-1})\omega((Z_{k1},\ldots,Z_{k,k-1})^t))^t\\
=&Z_{kk}-((Z_{k1},\ldots,Z_{k,k-1})
(Z_{ij})_{1\leq i,j\leq k-1}^{-1}
(Z_{1k},\ldots,Z_{k-1,k})^t)^t=C_k.\qedhere
\end{align*}
\end{proof}

\section{Application to the general linear Lie superalgebra}\label{sec:gl-case}

The method developed in the previous sections applies to the case \(p=m\) of \((\fgl(p|q),\fgl(m|n))\)-Howe duality. Our goal is to recover, in this case, the explicit highest weight vector formulas of Cheng and Wang; see \cite[Theorems~4.3 and~4.4]{MR1847665}.

\subsection{Howe duality for the general linear Lie superalgebra}

We recall the Howe duality for the general linear Lie superalgebras.

\begin{theorem}[{see \cite[Theorem~3.2]{MR1847665}}]\label{thm:howegl}
As a \(\fgl(p|q)\oplus\fgl(m|n)\)-module, \(S(\bc^{p|q}\otimes \bc^{m|n})\) admits the multiplicity-free decomposition
\[
S(\bc^{p|q}\otimes \bc^{m|n})\cong\bigoplus_{\substack{\lambda_{p+1}\leq q,\\\lambda_{m+1}\leq n}}L_{p|q}(\lambda)\otimes L_{m|n}(\lambda),
\]
where \(L_{r|s}(\lambda)\) denotes the irreducible \(\fgl(r|s)\)-module of highest weight \((\lambda_1,\ldots,\lambda_r,\langle\lambda'_1-r\rangle,\ldots,\langle\lambda'_s-r\rangle)\), where \(\langle k\rangle:=\max(k,0)\).
\end{theorem}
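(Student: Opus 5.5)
The plan is to deduce the decomposition from super Schur--Weyl (Sergeev, Berele--Regev) duality, degree by degree, and then read off the highest weights. Write \(V=\bc^{p|q}\) and \(W=\bc^{m|n}\). Since both actions preserve polynomial degree, it suffices to decompose \(S^d(V\otimes W)\) for each \(d\geq 0\). Over \(\bc\), the symmetrizer identifies \(S^d(V\otimes W)\) with the \(\mathfrak S_d\)-invariants of \((V\otimes W)^{\otimes d}\), where \(\mathfrak S_d\) acts by signed (Koszul) permutations. Reordering the factors gives an isomorphism
\[
(V\otimes W)^{\otimes d}\cong V^{\otimes d}\otimes W^{\otimes d},
\]
which is \(\fgl(p|q)\oplus\fgl(m|n)\)-equivariant. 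It intertwines the signed action of \(\mathfrak S_d\) on the left with the diagonal signed action on \(V^{\otimes d}\otimes W^{\otimes d}\). The parity signs cancel consistently because the Koszul sign of a transposition on \(V\otimes W\) factors as a product of the signs for \(V\) and for \(W\).

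Next, invoke super Schur--Weyl duality. As a \(\fgl(p|q)\times\mathfrak S_d\)-module,
\[
V^{\otimes d}\cong\bigoplus_{\lambda\vdash d,\ \lambda_{p+1}\leq q}L_{p|q}(\lambda)\otimes S^\lambda,
\]
and similarly for \(W^{\otimes d}\) with the hook condition \(\lambda_{m+1}\leq n\). Here \(S^\lambda\) is the Specht module. Taking diagonal invariants, and using that Specht modules are self-dual, gives \((S^\lambda\otimes S^\mu)^{\mathfrak S_d}\cong\Hom_{\mathfrak S_d}(S^\lambda,S^\mu)\). By Schur's lemma this is \(\bc\) when \(\lambda=\mu\) and zero otherwise. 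Hence
\[
S^d(V\otimes W)\cong\bigoplus_{\lambda\vdash d}L_{p|q}(\lambda)\otimes L_{m|n}(\lambda),
\]
with \(\lambda\) satisfying both hook conditions. Summing over \(d\) gives the multiplicity-free decomposition.

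It remains to identify the highest weight of the simple \(\fgl(p|q)\)-module attached to the hook partition \(\lambda\). For this, I would compute the highest weight vector in \(V^{\otimes d}\) of a Young symmetrizer applied to the tableau of shape \(\lambda\). Fill the first \(p\) rows with the even basis vectors \(e_1,\ldots,e_p\) row by row. Fill the remaining cells in the arm with the odd basis vectors \(f_1,\ldots,f_q\), column by column, placing \(f_j\) in column \(j\) below row \(p\). This yields weight \((\lambda_1,\ldots,\lambda_p,\langle\lambda'_1-p\rangle,\ldots,\langle\lambda'_q-p\rangle)\). An alternative is a character comparison via hook Schur functions \(hs_\lambda(x;y)\), whose leading monomial in the dominance order is exactly this weight. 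I would use the character route to avoid sign bookkeeping.

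The main obstacle is the super Schur--Weyl step itself. One must know that \(V^{\otimes d}\) is semisimple as a \(\fgl(p|q)\)-module and that the images of \(U(\fgl(p|q))\) and \(\bc\mathfrak S_d\) are mutual centralizers. One must also know that the \(\lambda\) which occur are exactly the \((p|q)\)-hook partitions, each giving a distinct simple module. I would prove the double centralizer property by a polarization/density argument: the span of \(g^{\otimes d}\) for \(g\) in the supergroup, equivalently the symmetric tensors in \(\End(V)^{\otimes d}\), is the centralizer of \(\mathfrak S_d\). The nonvanishing criterion \(\lambda_{p+1}\leq q\) would come from showing that the hook Schur function \(hs_\lambda\) is nonzero exactly for such \(\lambda\).
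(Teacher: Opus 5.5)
Your argument is correct. The paper gives no proof of its own and simply quotes the result from Cheng--Wang, where it is obtained essentially as you propose: write $S^d(\bc^{p|q}\otimes\bc^{m|n})$ as the $\mathfrak{S}_d$-invariants of $(\bc^{p|q})^{\otimes d}\otimes(\bc^{m|n})^{\otimes d}$, apply the Sergeev/Berele--Regev super Schur--Weyl duality together with Schur's lemma for Specht modules, and read off the highest weights via the hook-tableau filling or hook Schur functions.
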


We denote the standard basis of \(\fgl(r|s)\) by
\[
\{E_{ij},E_{\bar k\bar l},F_{i\bar l},F_{\bar k j}
\mid 1\leq i,j\leq r,\ 1\leq k,l\leq s\}.
\]
We now choose explicit coordinates on \(S(\bc^{p|q}\otimes\bc^{m|n})\), identifying it with a polynomial superalgebra on which the two general linear Lie superalgebras act by differential operators.

Let \(\{e_1,\ldots,e_p,f_1,\ldots,f_q\}\) and \(\{u_1,\ldots,u_m,v_1,\ldots,v_n\}\) be standard bases of \(\bc^{p|q}\) and \(\bc^{m|n}\), respectively, where \(e_i,u_j\) are even and \(f_k,v_l\) are odd. Then we identify \(S(\bc^{p|q}\otimes\bc^{m|n})\) with the polynomial superalgebra generated by
\begin{align*}
x_{ij}:=e_i\otimes u_j,\quad \eta_{il}:=e_i\otimes v_l,\quad\xi_{kj}:=f_k\otimes u_j,\quad y_{kl}&:=f_k\otimes v_l,
\end{align*}
where \(x_{ij},y_{kl}\) are even and \(\eta_{il},\xi_{kj}\) are odd, with
\[
1\leq i\leq p,\quad 1\leq k\leq q,\quad
1\leq j\leq m,\quad 1\leq l\leq n.
\]
Under this identification, the \(\fgl(p|q)\)-action is given by
\begin{align*}
\varrho_1(E_{ii'})=&\sum_{j=1}^m x_{ij}\frac{\partial}{\partial x_{i'j}}
+\sum_{l=1}^n \eta_{il}\frac{\partial}{\partial \eta_{i'l}},&
\varrho_1(E_{\bar k\bar k'})=&\sum_{j=1}^m \xi_{kj}\frac{\partial}{\partial \xi_{k'j}}
+\sum_{l=1}^n y_{kl}\frac{\partial}{\partial y_{k'l}},\\
\varrho_1(F_{i\bar k})=&\sum_{j=1}^m x_{ij}\frac{\partial}{\partial \xi_{kj}}
+\sum_{l=1}^n \eta_{il}\frac{\partial}{\partial y_{kl}},&
\varrho_1(F_{\bar k i})=&\sum_{j=1}^m \xi_{kj}\frac{\partial}{\partial x_{ij}}
+\sum_{l=1}^n y_{kl}\frac{\partial}{\partial \eta_{il}},
\end{align*}
where \(1\leq i,i'\leq p,\ 1\leq k,k'\leq q\), and the \(\fgl(m|n)\)-action is given by
\begin{align*}
\varrho_2(E_{jj'})=&\sum_{i=1}^p x_{ij}\frac{\partial}{\partial x_{ij'}}
+\sum_{k=1}^q \xi_{kj}\frac{\partial}{\partial \xi_{kj'}},&
\varrho_2(E_{\bar l\bar l'})=&\sum_{i=1}^p \eta_{il}\frac{\partial}{\partial \eta_{il'}}
+\sum_{k=1}^q y_{kl}\frac{\partial}{\partial y_{kl'}},\\
\varrho_2(F_{j\bar l})=&\sum_{i=1}^p x_{ij}\frac{\partial}{\partial \eta_{il}}
-\sum_{k=1}^q \xi_{kj}\frac{\partial}{\partial y_{kl}},&
\varrho_2(F_{\bar l j})=&\sum_{i=1}^p \eta_{il}\frac{\partial}{\partial x_{ij}}
-\sum_{k=1}^q y_{kl}\frac{\partial}{\partial \xi_{kj}},
\end{align*}
where \(1\leq j,j'\leq m,\ 1\leq l,l'\leq n\).

\subsection{Regular representations on coordinate superalgebras}

From now on, we restrict to the case \(p=m\). Consider the directed system of polynomial superalgebras \(S(\bc^{m|q}\otimes\bc^{m|n})\), indexed by \(n,q\in\bn\), under the natural inclusions. Its direct limit is the polynomial superalgebra
\[
\cpinf^{(m)}:=\bc[x_{ij},\eta_{il},\xi_{kj},y_{kl}\mid
1\leq i,j\leq m,\ k,l\in\bn],
\]
where \(x_{ij},y_{kl}\) are even and \(\eta_{il},\xi_{kj}\) are odd. Under the canonical maps to this direct limit, an argument analogous to that in Remark~\ref{rmk:reduction-square} identifies, for \(q,n\geq\lambda_{m+1}\), the highest weight vector spaces corresponding to the \(\lambda\)-summands in the \((\fgl(m|q),\fgl(m|n))\)-Howe dualities. We denote the resulting one-dimensional subspace of \(\cpinf^{(m)}\) by \(H_\lambda^{(m)}\). For the purpose of characterizing these vectors, it suffices to consider the \((\fgl(m|n),\fgl(m|n))\)-case.

For \(1\leq r\leq m\) and \(s\in\bn\), set
\[
d_r^X:=\det(x_{ij})_{1\leq i,j\leq r},\quad
d_s^Y:=\det(y_{kl})_{1\leq k,l\leq s}.
\]
Let \(\cd_m\subset\cpinf^{(m)}\) be the multiplicative subset generated by \(d_r^X\ (1\leq r\leq m)\) and \(d_s^Y\ (s\in\bn)\). The natural map embeds \(\cpinf^{(m)}\) into the localization \(\cd_m^{-1}\cpinf^{(m)}\).

In parallel with the definitions in the preceding sections, one defines the affine superspace \(M(m|n)\), the general linear supergroup \(\GL(m|n)\), the unipotent subgroups \(U^\pm\), the diagonal subgroup \(H\), and the big cell \(\Omega\) associated with the Gauss decomposition. 

Let \(I_e:=\ker\epsilon\) be the augmentation ideal of \(\co(\GL(m|n))\). The residue classes of the elements in
\[
\{x_{ij}-\delta_{ij},\, y_{kl}-\delta_{kl},\, \eta_{il},\, \xi_{kj}
\mid 1\leq i,j\leq m,\ 1\leq k,l\leq n\}
\]
constitute a basis of \(I_e/I_e^2\). Let
\[
\{E'_{ij},\, E'_{\bar k\bar l},\, F'_{i\bar l},\, F'_{\bar k j}
\mid 1\leq i,j\leq m,\ 1\leq k,l\leq n\}
\]
be the corresponding dual basis, viewed as a basis of \(\Lie(\GL(m|n))\). Then we identify \(\fgl(m|n)\) with \(\Lie(\GL(m|n))\) by
\begin{align*}
E_{ij}&\mapsto E'_{ij},&
E_{\bar k\bar l}&\mapsto E'_{\bar k\bar l},\\
F_{i\bar l}&\mapsto -F'_{i\bar l},&
F_{\bar k j}&\mapsto F'_{\bar k j}
\quad (1\leq i,j\leq m,\ 1\leq k,l\leq n).
\end{align*}

The regular representations \(\rho_1,\rho_2\) and singular vectors are obtained in the same way as in Subsections~\ref{sec:regular-big-cell} and~\ref{subsec:sing}.

Under the identification \(\co(M(m|n))\cong S(\bc^{m|n}\otimes\bc^{m|n})\), the regular representations on \(\co(M(m|n))\) correspond to the Howe duality. Let \(\sigma\) be the automorphism of \(\fgl(m|n)\) defined by
\begin{align*}
\sigma(E_{ij})&=-E_{ji},&
\sigma(E_{\bar k\bar l})&=-E_{\bar l\bar k},\\
\sigma(F_{i\bar l})&=-F_{\bar l i},&
\sigma(F_{\bar k j})&=F_{j\bar k}
\quad (1\leq i,j\leq m,\ 1\leq k,l\leq n).
\end{align*}

\begin{proposition}\label{prop:gliso}
For \(x,y\in\fgl(m|n)\) and \(f\in\co(M(m|n))\), we have
\[
\rho_1(x)f=\varrho_1(\sigma(x))f,\quad\rho_2(y)f=\varrho_2(y)f.
\]
\end{proposition}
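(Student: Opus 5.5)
Both sides of each identity are superderivations of the polynomial superalgebra \(\co(M(m|n))=\bc[x_{ij},\eta_{il},\xi_{kj},y_{kl}]\). For the right-hand sides this is clear, since \(\varrho_1,\varrho_2\) are given by first-order differential operators. For the left-hand sides it is the \(\fgl(m|n)\)-analogue of Proposition~\ref{prop:der} (Remark~\ref{rmk:der}), restricted along \(\co(M(m|n))\subset\co(\GL(m|n))\). A superderivation of a polynomial superalgebra is determined by its values on generators. Moreover, both sides are linear in \(x\) (resp.\ \(y\)), and \(\sigma\) is linear. It therefore suffices to take \(x,y\) in the standard basis \(\{E_{ij},E_{\bar k\bar l},F_{i\bar l},F_{\bar kj}\}\) and compare the two sides on each generator.

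For \(\rho_2\) I will use the comultiplication of \(\co(\GL(m|n))\), coming from matrix multiplication \(\begin{pmatrix}X&H\\\Xi&Y\end{pmatrix}\). For instance, \(\Delta(x_{ij})=\sum_a x_{ia}\otimes x_{aj}+\sum_c\eta_{ic}\otimes\xi_{cj}\), with analogous formulas for \(\eta,\xi,y\). I pair the second tensor factor with \(y\), identified with the dual basis \(E',\pm F'\) of \(I_e/I_e^2\). Only the terms linear in one generator survive, and \(\lra{g}{E'_{ab}}\) picks out the coefficient of \(x_{ab}-\delta_{ab}\). For example, \(\rho_2(E_{jj'})x_{ij}=x_{ij'}\)-type terms appear exactly as in \(\varrho_2(E_{jj'})\). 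For the odd generators I will carefully track the sign \((-1)^{|y||f|}\) together with the sign convention \(F_{i\bar l}\mapsto -F'_{i\bar l}\). These signs are what produce the minus signs in \(\varrho_2(F_{j\bar l})\) and \(\varrho_2(F_{\bar lj})\).

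For \(\rho_1\) I will compute \(\rho_1(x)f=(-1)^{|x|}\lra{f_{(1)}}{S^{-1}(x)}f_{(2)}\). For a primitive \(x\in\fgl(m|n)\) we have \(S^{-1}(x)=-x\), so \(\rho_1(x)f=-(-1)^{|x|}\lra{f_{(1)}}{x}f_{(2)}\). Pairing the first tensor factor with the dual basis element yields operators of the form \(x_{\cdot}\mapsto -x_{j\cdot}\)-transposed. This produces the index swap \(E_{ij}\mapsto -E_{ji}\) that defines \(\sigma\). The case-by-case comparison for the four types of basis elements acting on the four types of generators then confirms the formula for \(\sigma\).

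The main obstacle is the sign bookkeeping in the odd cases: \(F_{i\bar l}\) and \(F_{\bar kj}\) acting on \(\eta\) and \(\xi\). Here three sign sources interact: the Koszul sign in the pairing, the factor \((-1)^{|x|}\), and the nonstandard identification \(F_{i\bar l}\mapsto -F'_{i\bar l}\). They must combine to give \(\sigma(F_{i\bar l})=-F_{\bar li}\) but \(\sigma(F_{\bar kj})=+F_{j\bar k}\). I would organize this part as a small table of \(\lra{g}{\,\cdot\,}\) for each generator \(g\) and each dual basis element, and read off both \(\rho_1\) and \(\rho_2\) from it.
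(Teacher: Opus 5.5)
Your proposal is correct and follows the paper's route: both sides are superderivations of the polynomial superalgebra, so the identity reduces to comparing them on the generators for each standard basis element. This is how the paper proves the queer analogue, Proposition~\ref{prop:iso}; for this statement the paper gives no separate proof and says the argument is the same. One slip in your illustration: \(\rho_2(E_{jj'})\) sends \(x_{ij'}\mapsto x_{ij}\), not \(x_{ij}\mapsto x_{ij'}\), in agreement with \(\varrho_2(E_{jj'})=\sum_i x_{ij}\,\partial/\partial x_{ij'}+\cdots\). The odd-case signs do work out as you anticipate under the convention \(F_{i\bar l}\mapsto -F'_{i\bar l}\).
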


For each \(n\), let \(\Omega_{m|n}\) denote the big cell of \(\GL(m|n)\). Set
\[
D_{m|n}:=d_1^X\cdots d_m^X d_1^Y\cdots d_n^Y.
\]
Then
\[
\co(\Omega_{m|n})
=\bc[x_{ij},y_{kl},\eta_{il},\xi_{kj}
\mid 1\leq i,j\leq m,\ 1\leq k,l\leq n]_{D_{m|n}},
\]
which naturally embeds into \(\cd_m^{-1}\cpinf^{(m)}\). These embeddings are compatible with the natural inclusions in \(n\), as well as with the corresponding inclusions of the Lie superalgebras and their regular actions.

\subsection{Highest weight vectors in Howe duality}

Let
\[
\Gamma:\co(\Omega)\overset{\sim}{\to}\co(U^-)\otimes\co(H)\otimes\co(U^+)
\]
be the Gauss decomposition isomorphism. By the same argument as in Theorem~\ref{thm:singular-alpha}, the \(\fn^-\oplus\fn^+\)-singular subspace is given by
\[
\Sing(\co(\Omega))=\Gamma^{-1}(1\otimes\co(H)\otimes 1).
\]
Combining it with Proposition~\ref{prop:gliso}, the space of highest weight vectors in the \(\fgl(m|n)\oplus\fgl(m|n)\)-module \(S(\bc^{m|n}\otimes\bc^{m|n})\) is given by
\begin{align*}
&\left\{
f\in S(\bc^{m|n}\otimes\bc^{m|n})\;\middle|\;
\varrho_1(x)f=\varrho_2(y)f=0,\forall x,y\in\fn^+
\right\}\\
=&\Sing(\co(\Omega))\cap\co(M(m|n))=\Gamma^{-1}(1\otimes\co(H)\otimes 1)\cap\co(M(m|n)).
\end{align*}

This description allows us to write down explicit highest weight vectors for the corresponding summands in the Howe duality.

Fix \(m\). For \(r\in\bn\), \(1\leq i\leq m\), and \(1\leq j\leq r\), set
\[
\alpha_i^{(m|r)}:=\Gamma_{m|r}^{-1}(1\otimes x_{ii}\otimes 1),\quad
\beta_j^{(m|r)}:=\Gamma_{m|r}^{-1}(1\otimes y_{jj}\otimes 1).
\]
The same compatibility argument used before Definition~\ref{def:ab} shows that the following common images are well-defined.
\begin{definition}\label{def:glab-global}
For fixed \(m\), we define
\[
\alpha_i\ (1\leq i\leq m),\quad \beta_j^{(m)}\ (j\in\bn)
\]
to be the common images of \(\alpha_i^{(m|r)}\) for \(r\in\bn\) and of \(\beta_j^{(m|r)}\) for \(r\geq j\), respectively, in \(\cd_m^{-1}\cpinf^{(m)}\).
\end{definition}

\begin{theorem}\label{thm:glhighest}
For fixed \(m\), let \(\lambda\) be a partition, and define \(w_\lambda\in\cd_m^{-1}\cpinf^{(m)}\) by
\[
w_\lambda:=\prod_{i=1}^m \alpha_i^{\lambda_i}\cdot\prod_{j=1}^{\lambda_{m+1}} \bigl(\beta_j^{(m)}\bigr)^{\langle\lambda'_j-m\rangle}.
\]
Then \(H_\lambda^{(m)}=\bc w_\lambda\). In particular, for \(q,n\geq\lambda_{m+1}\), under the identification, the element \(w_\lambda\) is a highest weight vector for the \(L_{m|q}(\lambda)\otimes L_{m|n}(\lambda)\) summand.
\end{theorem}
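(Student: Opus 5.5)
We follow the proof of Theorem~\ref{thm:highest}, with one additional ingredient: here $w_\lambda$ must first be shown to be a polynomial. Fix $n\geq\lambda_{m+1}$ and work in $\co(\Omega_{m|n})\subset\cd_m^{-1}\cpinf^{(m)}$.

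By the analogue of Theorem~\ref{thm:singular-alpha} for $\GL(m|n)$, the singular subspace $\Sing(\co(\Omega_{m|n}))$ is $\Gamma^{-1}(1\otimes\co(H)\otimes1)$. Here $\co(H)=\bc[x_{ii}^{\pm1},y_{jj}^{\pm1}]$ is purely even, since the diagonal subgroup of $\GL(m|n)$ is a torus. Hence this singular subspace is spanned by the Laurent monomials
\[
\prod_i\alpha_i^{r_i}\prod_j\bigl(\beta_j^{(m|n)}\bigr)^{s_j},\qquad r_i,s_j\in\bz .
\]
By the torus analogue of Proposition~\ref{prop:diag-rho}, each such monomial is a weight vector. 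Its $\rho_2$-weight is $(r_1,\dots,r_m\,|\,s_1,\dots,s_n)$, and its $\rho_1$-weight is the negative, which becomes the same weight after the twist by $\sigma$ in Proposition~\ref{prop:gliso}. Distinct exponent vectors give distinct weights, so every weight space of $\Sing(\co(\Omega_{m|n}))$ is at most one-dimensional. In particular, the monomial of weight $(\lambda_1,\dots,\lambda_m\,|\,\langle\lambda'_1-m\rangle,\dots,\langle\lambda'_n-m\rangle)$ is exactly $w_\lambda$. (Note $\langle\lambda'_j-m\rangle=0$ for $j>\lambda_{m+1}$.)

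By the displayed identity preceding the theorem, the highest weight vectors in $S(\bc^{m|n}\otimes\bc^{m|n})$ are $\Sing(\co(\Omega_{m|n}))\cap\co(M(m|n))$. Theorem~\ref{thm:howegl} guarantees that $H_\lambda^{(m)}$ is nonzero, has this weight, and lies in the intersection. The weight space of $\Sing(\co(\Omega_{m|n}))$ is $\bc w_\lambda$, so $H_\lambda^{(m)}\subseteq\bc w_\lambda$. Since $H_\lambda^{(m)}\neq0$, we get $H_\lambda^{(m)}=\bc w_\lambda$, and in particular $w_\lambda$ is a polynomial. The stability argument of Remark~\ref{rmk:reduction-square}, together with the compatibility of the $\alpha_i,\beta_j^{(m)}$ under $n\mapsto n'$ (Definition~\ref{def:glab-global}), then identifies $w_\lambda$ as a highest weight vector for all $q,n\geq\lambda_{m+1}$.

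The main obstacle is the weight bookkeeping. We must verify the torus analogue of Proposition~\ref{prop:diag-rho} and track the sign conventions. Concretely, we need $\rho_2(E_{jj})$ to act on $\alpha_i$ by $\delta_{ij}$, and $\rho_2(E_{\bar l\bar l})$ to act on $\beta_j$ by $\delta_{jl}$. We also need $\rho_1$ to act by the negatives, so that after $\sigma$ the $\varrho_1$-weight equals the $\varrho_2$-weight rather than its dual. Finally, we must match the highest weight in Theorem~\ref{thm:howegl} with the Borel subalgebra $\fn^+$ used in the Gauss decomposition; the $\sigma$-twist converts $\fn^-$-singularity for $\rho_1$ into $\fn^+$-singularity for $\varrho_1$.

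Finally, we record the characterization of $\alpha_i,\beta_j^{(m)}$ via determinants stated in Theorem~C. It follows as in Proposition~\ref{prop:formula}, by taking Berezinian-free block Gauss factorizations. The product $\prod_{i\le r}\alpha_i$ equals the leading minor $\det\bfX_{[r],[r]}$, since left and right unipotent factors preserve leading minors. Similarly, the $\bfY$-block of the diagonal factor is the Schur complement $\bfY-\bfXi\bfX^{-1}\bfH$, and its leading minors give $\prod_{j\le s}\beta_j^{(m)}$. This is not needed for the statement itself, but it connects $w_\lambda$ to the formulas of Cheng--Wang.
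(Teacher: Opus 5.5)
Your proposal is correct and follows the paper's route. The paper obtains Theorem~\ref{thm:glhighest} by the same argument as Theorem~\ref{thm:highest}: it identifies $\Sing(\co(\Omega_{m|n}))=\Gamma^{-1}(1\otimes\co(H)\otimes 1)$ with the span of Laurent monomials in the $\alpha_i,\beta_j$ (since $H$ is a torus), uses the weight $(\lambda_1,\dots,\lambda_m\,|\,\langle\lambda'_1-m\rangle,\dots,\langle\lambda'_n-m\rangle)$ to single out $w_\lambda$, and compares with the nonzero space $H_\lambda^{(m)}$ from Theorem~\ref{thm:howegl}. As your own argument shows, polynomiality of $w_\lambda$ is a consequence of this comparison rather than something to establish first, and the Schur-complement remark is Proposition~\ref{prop:glab}, which is not needed for the theorem.
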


Analogously, using Schur complements, we obtain the following explicit formulas for \(\alpha_i\) and \(\beta_j^{(m)}\). Define matrices over \(\cd_m^{-1}\cpinf^{(m)}\) by
\begin{align*}
\bfX&=(x_{ij})_{1\leq i,j\leq m},
&\bfH&=(\eta_{il})_{1\leq i\leq m,\ l\in\bn},\\
\bfXi&=(\xi_{kj})_{k\in\bn,\ 1\leq j\leq m},
&\bfY&=(y_{kl})_{k,l\in\bn}.
\end{align*}

\begin{proposition}\label{prop:glab}
The elements \(\alpha_i\) \((1\leq i\leq m)\) and
\(\beta_j^{(m)}\) \((j\in\bn)\) are determined by the following formulas.
\begin{align*}
&\prod_{i=1}^r \alpha_i=\det\bfX_{[r],[r]}&&(1\leq r\leq m),\\
&\prod_{j=1}^s \beta_j^{(m)}=\det\,(\bfY-\bfXi \bfX^{-1}\bfH)_{[s],[s]}&&(s\in\bn).
\end{align*}
\end{proposition}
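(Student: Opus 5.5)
We prove Proposition~\ref{prop:glab} by the Schur-complement argument used for Proposition~\ref{prop:formula}, now for super block matrices. Fix $n\geq s$ and work in $\co(\Omega_{m|n})$. Write the generic point of $M(m|n)$ as the supermatrix
\[
Z=\begin{pmatrix}\bfX & \bfH_n\\ \bfXi_n & \bfY_n\end{pmatrix},
\]
where the subscript $n$ indicates truncation to the first $n$ odd indices. By the definition of $\Gamma_{m|n}$, the Gauss decomposition gives $Z=LDR$. Here $L$ is lower unipotent, $R$ is upper unipotent, and $D=\diag(\alpha_1,\ldots,\alpha_m,\beta_1^{(m|n)},\ldots,\beta_n^{(m|n)})$, all with entries in $\co(\Omega_{m|n})$.

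The key observation is that truncation to a leading principal corner is compatible with the factorization. Because $L$ is block lower triangular and $R$ is block upper triangular, the leading $t\times t$ corner of $LDR$ equals the product of the leading $t\times t$ corners of $L$, $D$ and $R$. For the even formula, take $t=r\leq m$. All matrices involved then have only even entries in the purely even corner, so ordinary determinant multiplicativity gives
\[
\det\bfX_{[r],[r]}=\prod_{i=1}^r\alpha_i .
\]

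For the odd formula, I will use the block decomposition $Z=\begin{pmatrix}\bfX&\bfH\\ \bfXi&\bfY\end{pmatrix}$. Since $\bfX$ is invertible in the localization, $Z$ factors as
\[
Z=\begin{pmatrix}1&0\\ \bfXi\bfX^{-1}&1\end{pmatrix}\begin{pmatrix}\bfX&0\\0&\bfY-\bfXi\bfX^{-1}\bfH\end{pmatrix}\begin{pmatrix}1&\bfX^{-1}\bfH\\0&1\end{pmatrix}.
\]
I will then Gauss-decompose $\bfX=L_0D_0R_0$ and $S:=\bfY-\bfXi\bfX^{-1}\bfH=L_1D_1R_1$ further. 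Note that $S$ has even entries, and its leading minors $\det S_{[s],[s]}$ are invertible in $\co(\Omega_{m|n})$; this holds since $\det S_{[s],[s]}$ equals the Berezinian of the $(m+s)$-corner divided by $d_m^X$. Substituting these factorizations produces a second factorization of $Z$ of the form (lower unipotent)(diagonal)(upper unipotent). By uniqueness of the Gauss decomposition, which is the injectivity of $\gamma$ in the $\GL(m|n)$ analogue of Theorem~\ref{thm:gauss}, the diagonal factors must agree. Hence $D_1=\diag(\beta_1^{(m|n)},\ldots,\beta_n^{(m|n)})$.

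Since $S$ is an even matrix, the ordinary leading-minor argument from the first step applies to $S=L_1D_1R_1$ and gives
\[
\det S_{[s],[s]}=\prod_{j\leq s}\beta_j^{(m|n)} .
\]

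It remains to pass to $\cd_m^{-1}\cpinf^{(m)}$. The entries of $S_{[s],[s]}$ involve only the indices $k,l\leq s$, so the formula is independent of $n\geq s$. Taking common images as in Definition~\ref{def:glab-global} gives both identities.

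These products determine the elements uniquely: $\alpha_r$ is the ratio of consecutive leading minors of $\bfX$, and $\beta_s^{(m)}$ is the ratio of consecutive leading minors of $S$, all invertible in the localization.

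The main obstacle is the uniqueness and invertibility bookkeeping in the super setting. Specifically, one must check that the leading minors of $S$ lie in $\cd_m$ up to units, and that uniqueness of the Gauss factorization holds over an arbitrary commutative superalgebra. I would derive the latter from $\gamma$ being an isomorphism on functors of points. Once that is settled, the remaining steps are the classical Schur-complement manipulations, exactly as in \cite[Theorem~1.3]{MR2160825}.
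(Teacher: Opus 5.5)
Your proposal is correct and follows essentially the Schur-complement route the paper indicates (by analogy with Proposition~\ref{prop:formula}): you use uniqueness of the Gauss factorization where the paper uses invariance of Schur complements under unipotent multiplication, and these amount to the same thing. One harmless slip is that your Berezinian relation is inverted, since $\det S_{[s],[s]}=d_m^X\cdot\mathrm{Ber}(Z_{[m+s],[m+s]})^{-1}$; the invertibility you need follows most directly because $\det S_{[s],[s]}-d_s^Y$ lies in the nilpotent ideal generated by the odd variables.
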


\subsection{Comparison with Cheng--Wang's formula}

In this subsection, we show that the highest weight vectors obtained above recover the \(p=m\) case of Cheng--Wang's explicit formulas in \cite{MR1847665}. Our notation for the Howe duality corresponds to the notation in \cite{MR1847665} as follows:
\begin{align*}
x_{ij}\leftrightarrow x^i_j,\quad y_{kl}\leftrightarrow y^k_l,\quad
\eta_{il}\leftrightarrow\eta^i_l,\quad\xi_{kj}\leftrightarrow\xi^k_j
\quad (1\leq i,j\leq m,\ 1\leq k,l\leq n).
\end{align*}

It suffices to verify that, for the rectangular partition \(\lambda=(r^{m+1})\) with \(r\leq n\), the element \(\Gamma_r\) defined in Theorem~4.3 of \cite{MR1847665} coincides with the element \(w_\lambda\) defined in Theorem~\ref{thm:glhighest}. We write \(\rdet\) and \(\cdet\) for the row determinant and the column determinant, respectively.

Cheng--Wang's explicit formula is
\[
\Gamma_r=\sum_D (-1)^{|D|(|D|-1)/2}\biggl(\prod_{k=1}^r \rdet X_k\biggr)\rdet Y_D,
\]
where the sum ranges over all possible marked \(m\times r\) diagrams \(D\). We refer the reader to \cite{MR1847665} for the definitions of marked diagrams, \(X_k\) and \(Y_D\).

By Theorem~\ref{thm:glhighest} and Proposition~\ref{prop:glab}, we have
\begin{align*}
w_\lambda&=(\alpha_1\cdots \alpha_m)^r(\beta_1^{(m)}\cdots \beta_r^{(m)})\\
&=(\det\bfX)^r\det\,(\bfY-\bfXi \bfX^{-1}\bfH)_{[r],[r]}.
\end{align*}
For \(0\leq i\leq m\) and \(j\in\bn\), let \(\bfX_{i\leftarrow j}\) denote \(\bfX\) if \(i=0\), and otherwise the matrix obtained from \(\bfX\) by replacing its \(i\)-th column with the \(j\)-th column of \(\bfH\), and define \(u_{ij}:=\cdet \bfX_{i\leftarrow j}\). For \(1\leq i\leq m\), write \(w_i\) for the \(i\)-th column of \(\bfXi_{[r],[m]}\). For \(j\in\bn\), write \(v_j\) for the \(j\)-th column of \(\bfY_{[r],\bn}\). Using the adjugate matrix, we write \(\bfX^{-1}=(\det\bfX)^{-1}\bfX^{\adj}\). Then
\begin{align*}
w_\lambda&=\det\,\bigl((\det\bfX)\bfY-\bfXi (\bfX^{\adj})\bfH\bigr)_{[r],[r]}\\
&=\cdet\biggl((\det\bfX)v_1-\sum_{i=1}^m w_i u_{i1},\ldots,(\det\bfX)v_r-\sum_{i=1}^m w_i u_{ir}\biggr).
\end{align*}
For \(f\in\Hom(\{1,\ldots,r\},\{0,\ldots,m\})\), define the \(r\times r\) matrix \(\bfY_f\) by declaring its \(k\)-th column to be
\[
\begin{cases}
v_k, & f(k)=0,\\
w_{f(k)}, & f(k)>0,
\end{cases}
\]
and
\[
|f|:=\#\{1\leq k\leq r\mid f(k)>0\}.
\]
By the multilinearity of the column determinant,
\[
w_\lambda=\sum_{f\in\Hom(\{1,\ldots,r\},\{0,\ldots,m\})}(-1)^{|f|(|f|-1)/2}\biggl(\prod_{k=1}^r u_{f(k),k}\biggr)\cdet \bfY_f.
\]

Next, we prove that this expression coincides with the definition of \(\Gamma_r\). For \(f\in\Hom(\{1,\ldots,r\},\{0,\ldots,m\})\), let \(D_f\) be the marked diagram whose marked boxes are
\[
\{(f(k),k)\mid 1\leq k\leq r,\ f(k)>0\}.
\]
The assignment \(f\mapsto D_f\) gives a bijection between \(\Hom(\{1,\ldots,r\},\{0,\ldots,m\})\) and the set of all possible marked \(m\times r\) diagrams. By construction, \(|f|=|D_f|\). Moreover, it follows directly from the definitions of \(X_k\) and \(Y_D\) in
\cite{MR1847665} that
\[
\bfX_{f(k)\leftarrow k}=(X_k)^t,\quad \bfY_f=(Y_{D_f})^t,
\]
where \(X_k\) is associated with \(D_f\). Therefore,
\begin{align*}
w_\lambda&=\sum_{f\in\Hom(\{1,\ldots,r\},\{0,\ldots,m\})}(-1)^{|f|(|f|-1)/2}\biggl(\prod_{k=1}^r\cdet\bfX_{f(k)\leftarrow k}\biggr)\cdet \bfY_f\\
&=\sum_D (-1)^{|D|(|D|-1)/2}\biggl(\prod_{k=1}^r\rdet X_k\biggr)\rdet Y_D=\Gamma_r.
\end{align*}
Thus, the two explicit expressions coincide.

\section*{Acknowledgment}

The fourth author expresses sincere gratitude to Prof. Jae-Hoon Kwon for addressing the question and to Prof. Shun-Jen Cheng for their valuable suggestions.
\bigskip

\textbf{Declaration on the Use of Generative AI:} The authors used OpenAI’s ChatGPT
to improve the language and presentation of the manuscript and as an additional tool to
verify the logical consistency of some proofs. All mathematical arguments, statements,
and references were independently verified by the authors, who take full responsibility for
the content of the paper.
\bigskip

\textbf{Data Availability Statement:} Not applicable
\bigskip

\textbf{Conflicts of Interest:} The authors have no conflicts of interest to declare that are relevant to this article.

\bibliographystyle{amsalpha}
\bibliography{refs}

\end{document}